    \theoremstyle{plain}
    \newtheorem{theorem}{Theorem}[section]
    \newtheorem{prop}[theorem]{Proposition}
    \newtheorem{lemma}[theorem]{Lemma}
    \theoremstyle{definition}
    \newtheorem{definition}[theorem]{Definition}
    \theoremstyle{remark}
    \newtheorem{remark}[theorem]{Remark}
    \numberwithin{equation}{section}
    \newcommand{\La}{\Lambda}
    \newcommand{\R}{\mathbb{R}}
    \newcommand{\Rc}{\ensuremath{\mathcal{R}}}
    \newcommand{\Z}{\mathbb{Z}}
    \newcommand{\E}{\mathcal{E}}
    \newcommand{\Hcc}{\ensuremath{\dot{\mathcal H}^1}}
    \newcommand{\Hc}{\ensuremath{{\mathcal H}_c}}
    \newcommand{\abs}[1]{\lvert#1\rvert}
    \DeclareMathOperator{\sgn}{sgn}
    \DeclareMathOperator{\divo}{div}
    \def\XXint#1#2#3{{\setbox0=\hbox{$#1{#2#3}{\int}$ }
    \vcenter{\hbox{$#2#3$ }}\kern-.6\wd0}}
    \def\paragraph{\@startsection{paragraph}{4}%
      \z@\z@{-\fontdimen2\font}%
      {\normalfont\bfseries}}
    \def\subsubsection{\@startsection{subsubsection}{3}%
      \z@{.5\linespacing\@plus.7\linespacing}{-.5em}%
      {\normalfont\bfseries}}
\begin{document}
    \title[Incompleteness of continuum flex boundary conditions for atomistic fracture]{Incompleteness of Sinclair-type continuum flexible boundary conditions for atomistic fracture simulations}
    %    Information for first author
    \author{Julian Braun}
    \address{Maxwell Institute for Mathematical Sciences and Department of Mathematics, Heriot-Watt University, Edinburgh, EH14 4AS, United Kingdom}
    \email{j.braun@hw.ac.uk}
    %    Information for second author
    \author{Maciej Buze}
    %\address{Maxwell Institute for Mathematical Sciences and Department of Mathematics, Heriot-Watt University, Edinburgh, EH14 4AS, United Kingdom}
    \email{m.buze@hw.ac.uk}
    \thanks{MB is supported by EPSRC, Grant No. EP/V00204X/1.} 
    
    %    General info
    \subjclass[2020]{74G10, 70C20, 74A45, 74E15, 41A58, 74G15}
    
    \date{\today}
    
    \keywords{crystalline solids, fracture, asymptotic expansion, elastic far-field}
    
    \begin{abstract}
    The elastic field around a crack opening is known to be described by continuum linearised elasticity in leading order. In this work, we explicitly develop the next term in the atomistic asymptotic expansion in the case of a Mode III crack in anti-plane geometry.
    The aim of such an expansion is twofold. First, we show that the well-known flexible boundary condition ansatz due to Sinclair is incomplete, meaning that, in principle, employing it in atomistic fracture simulations is no better than using boundary conditions from continuum linearised elasticity. And secondly, the higher order far-field expansion can be employed as a boundary condition for high-accuracy atomistic simulations.
    To obtain our results, we develop an asymptotic expansion of the associated lattice Green's function. In an interesting departure from the recently developed theory for spatially homogeneous cases, this includes a novel notion of a discrete geometry predictor, which accounts for the peculiar discrete geometry near the crack tip. 
    \end{abstract}
    
    \maketitle
    
    %%%%%%%%%%%%%%%%%%%%%%%%%%%%%%%%%%%%%%%%%%%%%%%%%%%%%%%%%%%%
    %%%%%%%%%%%%%%%%%%%%  NEW SECTION   %%%%%%%%%%%%%%%%%%%%%%%%
    %%%%%%%%%%%%%%%%%%%%%%%%%%%%%%%%%%%%%%%%%%%%%%%%%%%%%%%%%%%%
    \section{Introduction}
    
    Fracture mechanics has long been a cornerstone in the understanding of material failure, providing essential insight into the initiation and propagation of cracks in diverse structural components \cite{lawn_1993,SJ12}. On length scales where continuum fracture mechanics ceases to accurately describe matter, atomistic modelling techniques, particularly molecular dynamics (MD) simulations, have emerged as powerful tools for investigating atomistic fracture processes \cite{Bitzek2015}. The accuracy of these simulations crucially depends on the appropriate specification of boundary conditions, which describe the state of the material and its surroundings outside of the core region of interest, e.g. the crack tip. 
    
    Traditional atomistic simulations often employ periodic boundary conditions (PBC) to mitigate finite-size effects and mimic an infinite material \cite{frenkel2002understanding}. While PBCs have proven to be very effective for systems naturally permitting a periodic setup (e.g. localised defects), their use in the modelling of fracture is limited, since a half-infinite crack breaks the translational symmetry of the system. Trying to mitigate that while retaining a periodic setup may introduce artificial constraints that limit the realism of the simulated fracture process \cite{Bitzek2015}.
    
    An alternative commonly employed is to prescribe boundary conditions from continuum fracture mechanics, either simply from linear elasticity or from an asymptotic expansion that is based on the continuum theory (the so-called flexible boundary conditions due to Sinclair and co-authors \cite{sinclair1972atomistic,sinclair1975influence,sinclair1978flexible}). Such an approach has been employed in a number of recent atomistic studies of fracture \cite{moller2014comparative,andric2018atomistic,hiremath2022effects, lakshmipathy2022lefm,zhang2023atomistic,zhang2024efficiency}.  As recognised in a growing recent mathematical literature on this topic \cite{EOS2016, 2017-bcscrew, BHO22, braun2022higher, olson2023elastic}, %\cmb{More citations here?}
    for such an approach to be \emph{convergent}, that is to not lead to a build-up of numerical artefacts near the boundary of the computational domain, one needs to ensure that the prescribed boundary condition exhibit \emph{atomistic asymptotic consistency} up to a certain minimal order. This condition is called an \emph{approximate equilibrium} in \cite{EOS2016}.
    Moreover, prescribing boundary conditions with higher order atomistic asymptotic consistency leads to a corresponding quantifiable increase in the accuracy of the resulting numerical method.
    
    Unlike for dislocations and point defects, in the case of fracture, it follows from a simple Taylor expansion argument (see \cite[Section 3.]{2018-antiplanecrack}) that the zero order boundary condition, that is the linearised continuum elasticity boundary condition, is only convergent for simplified anti-plane mode III models \cite{2018-antiplanecrack}. It thus raises the question whether the higher-order terms in the continuum-theory-based asymptotic expansion proposed by Sinclair can lead to atomistic asymptotic consistency.  
    
    In the present work, we focus on the simplified anti-plane mode III model first analysed in \cite{2018-antiplanecrack,2019-antiplanecrack} and rigorously develop the next order in the atomistic asymptotic expansion. While the known \emph{zero order}  boundary condition (coming from continuum linearised elasticity) is given, in polar coordinates $x = r(\cos \theta, \sin \theta)$, by 
    \[
    K r^{1/2}\sin(\theta/2),
    \]
    where $K\geq 0$ is the (rescaled) stress intensity factor, we prove that the next order is given by 
    \[
    \underbrace{\vphantom{\frac{K^3}{64}} C_1(K) r^{-1/2}\sin(-\theta/2)}_{\rm (S)} \;\,\underbrace{+\,\frac{C_2 K^3}{64} r^{-1/2}\Big( \log r \sin \tfrac{\theta}{2} + \frac{1}{6} \sin\tfrac{5\theta}{2}\Big)}_{\rm (NL)},
    \]
    where $C_1(K)$ is a real constant depending on $K$ and $C_2$ is a real constant depending on the interatomic potential. The first term (S) corresponds to the aforementioned ansatz by Sinclair (see \cite[Appendix 2]{sinclair1975influence} and Remark~\ref{rem-incomplete-sinclair}), which in this case predicts the first order boundary condition to be simply of the form
    \[
    C_1(K) r^{-1/2}\sin(-\theta/2).
    \]
    We thus show that this ansatz is incomplete, as it misses the part (NL). As we will see in all detail in Section~\ref{sec:proofs-predictors} this term comes from the nonlinearity and can be seen as a solution to the corrector equation \eqref{eq:u1PDE}. The striking generic conclusion is that atomistic fracture simulations employing continuum-theory based flexible boundary conditions are only as accurate as the ones employing the zero order boundary condition. We note that such \emph{agnosticism} of continuum fracture theories to atomistic effects and nonlinearities has recently been studied numerically in \cite{lakshmipathy2022lefm}.

    A further result of our analysis here is a full characterisation of the constants $C_1$ and $C_2$. While $C_2$ is simple and explicit, $C_1$ depends non-trivially on the precise discrete behaviour around the crack tip. While the precise characterisation of $C_1$ we give here is a lot more involved, this is in line with the discrete nature of the defect dipole tensors for point defects, see \cite{BHO22, braun2022higher}.
    
    To obtain our results, we follow the atomistic asymptotic expansion framework developed in \cite{BHO22} for the spatially homogeneous case (e.g. for point defects and screw dislocations). The first step is a Taylor expansion of the energy around the reference configuration, followed by approximating finite differences with continuum differential operators and a grouping of terms according their far-field decay. This allows us to define a linear PDE to which (NL) is an explicit solution. The term (S) is an analogue of the \emph{multipole expansion} from \cite{BHO22}, however the precise structure is much more complicated, because the lattice Green's function is not spatially homogeneous. As a result, to find the right constant $C_1(K)$ in (S), a higher order expansion of the lattice Green's function in the anti-plane crack geometry is first established. In an intriguing and non-trivial departure from the homogeneous case, this requires an introduction of the novel notion of a \emph{discrete geometry predictor}, which accounts for the fact that near the crack tip, the complex square root mapping, present in the definition of zero-order Green's function predictor, distorts the lattice too much for that predictor to correctly capture the discrete effects there. This predictor is truly discrete, in the sense that it is defined as solving a linear discrete PDE. Our result in particular provides an explicit example of the extra complexity of trying to include the spatially inhomogenous cases in this theory.\\
    
    \paragraph{Outline of the paper} The paper is organised as follows. Section~\ref{sec:main} is devoted to the presentation of the main results. We introduce the atomistic model in Section~\ref{sec:model} and recall what is already known about it. Section~\ref{sec:higher_pred_u} is devoted to establishing results about the first order asymptotic expansion for the equilibrium crack displacement. In Section~\ref{sec:higher-order-G} the corresponding results for the Green's function are gathered. Section~\ref{sec:numerics} is devoted to finite-domain approximation -- we establish a convergence rate result and, based on that, present a numerical simulations confirming the veracity of the expansion. This is followed by conclusions in Section~\ref{sec:conclusions}. The proofs are gathered in the last three sections.
    
    %%%%%%%%%%%%%%%%%%%%%%%%%%%%%%%%%%%%%%%%%%%%%%%%%%%%%%%%%%%%
    %%%%%%%%%%%%%%%%%%%%  NEW SECTION   %%%%%%%%%%%%%%%%%%%%%%%%
    %%%%%%%%%%%%%%%%%%%%%%%%%%%%%%%%%%%%%%%%%%%%%%%%%%%%%%%%%%%%
    \section{Main results}\label{sec:main}
    \subsection{The atomistic model}\label{sec:model}
    The setup closely follows the one studied in \cite{2018-antiplanecrack}. We consider the two dimensional square lattice 
    \[
    \La := \{ m - \left(\tfrac12,\tfrac12\right)\,\mid\, m \in \Z^2\},
    \]
    with a crack opening along
    \[
    \Gamma_0 := \{ (x_1,0)\,\mid\, x_1 \leq 0\}
    \]
    and the origin of the coordinate system coinciding with the crack tip. We distinguish the atoms lying on the crack surface as 
    \[
    \Gamma := \Gamma_+ \cup \Gamma_-,\quad \Gamma_{\pm} := \{ m \in \La \,\mid\, m_1 < 0, \quad m_2 = \pm \tfrac12\}.
    \]
    The atoms are assumed interact with its nearest neighbours, which, in a homogeneous lattice, gives the set of interaction stencils
    \[
    \Rc := \{e_1, e_2, -e_1, -e_2\},
    \]
    and in a cracked crystal we have, for any $m \in \La$,
    \[
    \Rc(m) := \begin{cases} \Rc\quad &\text{ for } m \not\in \Gamma,\\
    \Rc \setminus \{\mp e_2\}\quad &\text{ for } m \in \Gamma_{\pm}.
    \end{cases}
    \]
    Given an anti-plane displacement $u\,\colon\, \La \to \R$, the usual finite difference operator is \linebreak ${D_{\rho}u(m):= u(m+\rho) - u(m)}$ and, reflecting the definition of the interaction stencils, the discrete gradient $Du(m) \in \R^{\Rc}$ is given by
    \begin{equation}\label{eqn-def-Du}
    (Du(m))_{\rho} := \begin{cases}
        D_{\rho}u(m)\quad &\text{ if } \rho \in \Rc(m),\\
        0\quad &\text { if } \rho \not\in \Rc(m).
    \end{cases}
    \end{equation}
    This allows us to define the discrete Sobolev space
    \[
    \Hcc := \{ u \,\colon\, \La \to \R\,\mid\, Du \in \ell^2(\La) \},\quad  \|u\|_{\Hcc} := \|Du\|_{\ell^2(\La)} = \left(\sum_{m \in \La} |Du(m)|^2\right)^{1/2}
    \]
    Note that $\|\cdot\|_{\Hcc}$ defines a semi-norm. One can use equivalence based on constant shifts or specify the value at a specific atom when a norm is required.
    
    The primary object of our study is the energy difference functional $\E\,\colon\,\Hcc \to \R$ given by
    \begin{equation}\label{eqn-E-def}
    \E(u) := \sum_{m \in \La} V(D\hat u_0(m) + Du(m)) - V(D\hat u_0(m)),
    \end{equation}
    where $V\, \colon \R^{\Rc} \to \R$ is an interatomic potential, which in our case takes the form
    \[
    V(Du(m) = \sum_{\rho \in \Rc} \phi\left((Du(m))_{\rho}\right),
    \]
    where $\phi \in C^k(\R)$ for $k \geq 6$ is assumed to satisfy \emph{anti-plane mirror symmetry} \cite[Section~2.2]{2017-bcscrew}. The function $\hat u_0$ in \eqref{eqn-E-def} is the continuum linearised elasticity predictor given by 
    \begin{equation}\label{eqn-def-uhat0}
    \hat u_0(m) = K \omega_2(m),
    \end{equation}
    where $K\geq 0$ is the stress intensity factor and acts as a loading parameter, and $\omega \,\colon\,\R^2\setminus \Gamma_0 \to \R^2$ is the complex square root mapping given, in polar coordinates $x = r(\cos\theta,\sin\theta)$, by
    \begin{equation}\label{eqn-omega-def}
    \omega(x) = (\omega_1(x),\omega_2(x)) := \sqrt{r}(\cos\tfrac\theta 2, \sin \tfrac \theta 2).
    \end{equation}
    The following results hold in this \emph{zero-order} case \cite[Theorem~2.1, Theorem~2.2]{2018-antiplanecrack}.
    \begin{theorem}
        The energy difference $\E$ is well-defined on $\Hcc$ and $k$-times continuously differentiable. 
    \end{theorem}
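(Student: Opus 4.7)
The proof plan is to first establish absolute summability of the summand defining $\E$ and then identify each Fr\'echet derivative as a bounded multilinear form on $\Hcc$. Taylor-expanding $V$ around $D\hat u_0(m)$ to second order gives
\[
V(D\hat u_0(m) + Du(m)) - V(D\hat u_0(m)) = \nabla V(D\hat u_0(m)) \cdot Du(m) + R(m),
\]
with $|R(m)| \leq C |Du(m)|^2$ by the $C^2$-regularity of $\phi$ and boundedness of $D\hat u_0$ away from the crack tip. Since $Du \in \ell^2(\La)$, the remainder $R$ is in $\ell^1(\La)$ immediately, so everything reduces to controlling the linear-in-$Du$ term.

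The delicate point is precisely this linear term. A direct Cauchy--Schwarz fails, because $|\nabla V(D\hat u_0(m))| \sim r^{-1/2}$ is not square-summable in two dimensions. I would instead apply discrete summation by parts to rewrite
\[
\sum_{m \in \La} \nabla V(D\hat u_0(m)) \cdot Du(m) = -\sum_{m \in \La} F(m)\,(u(m) - c),
\]
where $c$ is an arbitrary constant shift (admissible since $\E$ depends only on $Du$) and $F(m)$ is the discrete residual force at $m$ obtained by applying the energy's Euler--Lagrange operator to $\hat u_0$. Using that $\hat u_0 = K\omega_2$ is harmonic in the continuum and that anti-plane mirror symmetry yields $\phi'(0) = 0$, the linear part of $F$ is the finite-difference consistency error of the discrete Laplacian applied to a smooth harmonic function, decaying like $r^{-5/2}$ in the bulk, and the nonlinear part decays like $r^{-3/2}$. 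Both are summable against $u-c$ via a discrete Hardy/Poincar\'e inequality bounding $(u-c)/r$ in $\ell^2(\La)$ by $\|Du\|_{\ell^2(\La)}$.

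For $k$-fold continuous differentiability, I would compute
\[
\delta^j \E(u)[v_1,\ldots,v_j] = \sum_{m \in \La} \delta^j V\bigl(D\hat u_0(m) + Du(m)\bigr)\bigl[Dv_1(m),\ldots,Dv_j(m)\bigr]
\]
for $1 \leq j \leq k$. For $j \geq 2$, absolute convergence follows from H\"older's inequality, since $\delta^j V$ is uniformly bounded on bounded sets of $\R^{\Rc}$ and each $Dv_i \in \ell^2(\La)$; at least two factors carry the $\ell^2$-weight while any remaining ones are placed in $\ell^\infty$. For $j=1$ one reuses the summation-by-parts argument, now with $\nabla V(D\hat u_0)$ replaced by $\nabla V(D\hat u_0 + Du)$, and controls the extra $Du$-dependent contributions by a quadratic estimate in $\|Du\|_{\ell^2(\La)}$. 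Continuity of each derivative in $u$ then follows from dominated convergence. The main obstacle throughout is the combined treatment of the crack-tip singularity of $\hat u_0 \sim r^{1/2}$ and the stencil modification on $\Gamma_\pm$: one must verify that the stencil defects, supported only on the one-dimensional half-lines $\{m_1 < 0,\, m_2 = \pm \tfrac12\}$, together with the $r^{1/2}$ singularity of $\hat u_0$ at the origin, combine harmlessly to leave the estimate on $F$ unaffected. This is the core technical content established in the prior work \cite{2018-antiplanecrack}.
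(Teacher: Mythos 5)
Note first that the paper does not supply a proof of this theorem: it is quoted from \cite[Theorems~2.1 and 2.2]{2018-antiplanecrack}, so your proposal is evaluated as an independent reconstruction. Your high-level plan --- Taylor expansion in $Du$, quadratic remainder controlled in $\ell^1$, the linear-in-$Du$ term isolated as the only delicate piece, and uniform bounds on $\delta^j V$ plus H\"older for $j\ge 2$ --- is sound and consistent with the cited reference and with the analogous manipulations that do appear in this paper (Section~\ref{sec:interpol}, in particular \eqref{eqn-duhat0-hhat0} and Lemma~\ref{lem:symmetric_I_decay}).

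There is, however, a genuine gap in your treatment of the linear term. After summation by parts you bound $\sum_m F(m)\,(u(m)-c)$ via a discrete Hardy inequality of the form $\|(u-c)/|m|\|_{\ell^2(\La)}\lesssim\|Du\|_{\ell^2(\La)}$. This inequality is \emph{false} in two dimensions: the critical Hardy inequality degenerates at $d=2$, and the same obstruction appears discretely (take $v(m)=\log^{\alpha}(2+|m|)$ with $0<\alpha<\tfrac12$; then $Dv\in\ell^2(\La)$ but $\sum_m|v(m)-c|^2/|m|^2=\infty$ for every $c$). The route through pointwise residual forces followed by a decay-plus-Hardy pairing therefore cannot close as written. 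The correct device --- and the one the cited reference and Section~\ref{sec:interpol} of this paper actually use --- is to stay in divergence form rather than pass to pointwise forces. Write $\nabla V(D\hat u_0)=\nabla^2V(0)\,D\hat u_0+\big(\nabla V(D\hat u_0)-\nabla^2V(0)\,D\hat u_0\big)$; by mirror symmetry ($\phi'(0)=\phi'''(0)=0$) the second summand is $O(|D\hat u_0|^3)=O(|m|^{-3/2})\in\ell^2(\La)$ (also on $\Gamma$, which is one-dimensional), so it pairs with $Du$ by Cauchy--Schwarz. For the first summand the P1-interpolation identity $\sum_m D\hat u_0\cdot Dv=\sum_m\hat h_0\cdot Dv$ with $\hat h_0\in\ell^2(\La)$ does the job: its $\ell^2$-membership rests on the harmonicity of $\hat u_0$ with Neumann data on $\Gamma_0$, not on any Hardy-type bound, and it extends from $\Hc$ to $\Hcc$ by density (Lemma~\ref{lem:density}). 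As secondary points: your quoted bulk rate $r^{-3/2}$ for the nonlinear contribution to $F$ is that of the \emph{stress}, not the force --- after taking a discrete divergence it improves to $r^{-5/2}$, and with $r^{-3/2}$ even a log-corrected Hardy inequality would not rescue the argument; and for the $j\ge 2$ estimates you should state explicitly that the derivatives of $\phi$ up to order $k$ are assumed globally bounded.
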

    \begin{theorem}\label{thm:ubar0}
        If $K \geq 0$ is small enough, then there exist a locally unique $\bar u_0 \in \Hcc$ minimiser of $\E$ which depends linearly on $K$ and is strongly stable, that is there exists $\lambda > 0$ such that, for all $v \in \Hcc$,
        \[
        \delta^2 \E(\bar u_0)[v,v] \geq \lambda \|v\|^2_{\Hcc}.
        \]
        Furthermore, $\bar u_0$ satisfies 
        \begin{equation}\label{eqn-Dbaru0-K}
        |D\bar u_0(\ell)| \lesssim C(K) |\ell|^{-3/2 + \delta},
        \end{equation}
        for an arbitrarily small $\delta > 0$ and a constant $C$ that depends on $K$.
    \end{theorem}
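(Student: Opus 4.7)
The plan is to derive existence, uniqueness, smoothness in $K$, and strong stability from a direct application of the implicit function theorem at $(u,K)=(0,0)$. When $K=0$, the predictor $\hat u_0$ vanishes identically and, by the anti-plane mirror symmetry of $\phi$ (which forces $\phi'(0)=0$), the trivial displacement $u=0$ satisfies $\delta\E(u)=0$. At this base point the Hessian reduces to
\begin{equation*}
\delta^2\E(0)[v,v] \;=\; \phi''(0)\,\|Dv\|_{\ell^2(\La)}^2 \;=\; \phi''(0)\,\|v\|_{\Hcc}^2,
\end{equation*}
so it is coercive on $\Hcc$ provided $\phi''(0)>0$. The IFT applied to $F\colon\Hcc\times[0,K_0)\to(\Hcc)^*$, $F(u,K):=\delta\E(u;K)$, then produces a locally unique branch $K\mapsto\bar u_0(K)$ of equilibria with $\|\bar u_0\|_{\Hcc}=O(K)$ and smooth dependence on $K$, and strong stability of $\delta^2\E(\bar u_0)$ is preserved for small $K$ by continuity of the Hessian in $u$. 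The symmetry $\phi(s)=\phi(-s)$ combined with $\hat u_0\mapsto-\hat u_0$ under $K\mapsto-K$ forces $\bar u_0(-K)=-\bar u_0(K)$, and the leading Taylor coefficient in $K$ is then what is being referred to by ``depends linearly on $K$''.

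For the decay bound \eqref{eqn-Dbaru0-K} the next step is to rewrite the Euler--Lagrange equation in residual form. Taylor-expanding $V$ around $D\hat u_0$ and using that $\phi'(0)=\phi'''(0)=0$ by mirror symmetry gives
\begin{equation*}
H\,\bar u_0 \;=\; -f \;-\; \mathcal{N}(\bar u_0),\qquad H := \delta^2\E(0),
\end{equation*}
where $f:=\delta\E(0)$ is the residual force produced by the predictor and $\mathcal{N}$ collects the higher-order nonlinear remainder. Since $\omega_2$ is continuum-harmonic on $\R^2\setminus\Gamma_0$, the bulk contribution to $f$ is only a discrete-to-continuum consistency error controlled by $|D^3\hat u_0(\ell)|\lesssim K|\ell|^{-5/2}$, with extra contributions localised on the broken-bond set $\Gamma$ coming from the missing stencils in $\Rc(m)$. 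Inverting $H$ against this source via the lattice Green's function $G$ of the cracked Laplacian, one uses a gradient estimate of the form $|D_\ell G(\ell,m)|\lesssim(1+|\ell-m|)^{-1}$ and convolves with $|f|\lesssim K(1+|\ell|)^{-5/2}$ to obtain the claimed $|D\bar u_0(\ell)|\lesssim C(K)|\ell|^{-3/2+\delta}$. The cubic remainder $\mathcal{N}$ is treated perturbatively, with the smallness of $K$ ensuring that it can be absorbed back into the estimate via a fixed-point/bootstrap argument.

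The hard part is the Green's function bound on the cracked lattice. Spatial inhomogeneity --- the corner-type singularity at the origin and the traction-free crack surfaces along $\Gamma$ --- rules out a direct Fourier-transform argument on the homogeneous lattice and is responsible for the $\delta>0$ loss in the exponent. The natural route is to use the explicit continuum Neumann Green's function for the Laplacian on $\R^2\setminus\Gamma_0$ (conveniently built from the $\omega$ mapping) as a predictor, and to control the discrete correction through weighted energy estimates combined with a Combes--Thomas-type exponential-decay argument away from the crack tip. This is precisely the machinery that the later sections of the paper refine into a higher-order Green's function expansion; for the present ``zero-order'' theorem a substantially cruder version of it suffices.
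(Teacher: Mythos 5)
The paper does not prove this theorem: it is quoted verbatim from \cite{2018-antiplanecrack} (their Theorems 2.1 and 2.2), so there is no ``paper's own proof'' to compare against. Your outline is a reasonable reconstruction of the strategy of the cited reference: the existence/uniqueness/stability part via the implicit function theorem at $(u,K)=(0,0)$ (using $\phi'(0)=0$ and $\phi''(0)>0$ from the mirror symmetry) is essentially correct and matches the argument there, as is the general shape of the decay argument: residual-force estimates from the consistency error of $\hat u_0$ and from the broken-bond set, convolved against the lattice Green's function.

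Two substantive points are, however, wrong or elide the core difficulty. First, the Green's-function gradient bound you write down, $|D_\ell G(\ell,m)|\lesssim(1+|\ell-m|)^{-1}$, is the homogeneous-lattice estimate and is false on the cracked lattice: the correct estimate, which the present paper records in Theorem~\ref{thm-G-old} and Lemma~\ref{lem:DG_decay}, is $|D_1G(\ell,m)|\lesssim\big(1+|\omega(\ell)|\,|\omega(\ell)-\omega(m)|^{1-\delta}\big)^{-1}$, and what is actually convolved against the residual in the decay proof is the \emph{mixed} difference $D_1D_2G$ satisfying $|D_1D_2G(m,s)|\lesssim\big(1+|\omega(m)||\omega(s)||\omega(m)-\omega(s)|^{2-\delta}\big)^{-1}$. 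The anisotropic $\omega$-coordinate structure is essential: for $\ell,m$ on opposite crack faces $|\ell-m|$ can be $O(1)$ while the Green's function is $O(|\ell|^{-1})$, so the Euclidean estimate simply does not hold. Second, the reference to a Combes--Thomas-type exponential decay argument is not the right tool here (there is no spectral gap; the discrete operator is a Laplacian). The actual technique in \cite{2018-antiplanecrack} is a discrete analogue of elliptic regularity with dyadic decomposition and weighted ($\beta$-moment) Caccioppoli estimates, which is also where the $\delta$-loss in the exponent comes from. Since the decay of the cracked lattice Green's function is the central technical input that Theorem~\ref{thm-G-old} records and the present paper then refines, slipping in the homogeneous decay rate is where your proposal would break down if carried out in detail.
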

    \begin{remark}
        We note that while it is claimed in \cite{2018-antiplanecrack} that the dependence in $K$ is linear, this only applies to the small-loading regime, i.e. when $K$ is assumed to be small enough.
    \end{remark}
    
    \subsection{Higher order predictors and the improved decay of the atomistic corrector}\label{sec:higher_pred_u}
    In the present work, we succeed in developing the full equilibrium displacement field $u = \hat u_0 + \bar u_0$ from Theorem~\ref{thm:ubar0} into an expansion
    \begin{subequations}\label{eqn-u-expansion}
    \begin{align}
    u &= \hat u_0 +  \bar u_0\\ &= \hat u_0 + \hat u_1 + \bar u_1\\ &= \hat u_0 + \hat u_1 + \hat u_2 + \bar u_2,
    \end{align}
    \end{subequations}
    where the predictors $\hat u_1$ and $\hat u_2$ have explicit functional forms, satisfy, for $i=1,2$,
    \[
    |D\hat u_i(\ell)| \lesssim |\ell|^{-3/2}\log |l|,
    \]
    and, crucially, fully capture the behaviour of $u$ at this order, in the sense that the following is true. 
    \begin{theorem}\label{thm:ubar2_decay}
    The corrector $\bar u_2 \in \Hcc$, defined in \eqref{eqn-u-expansion}, satisfies 
    \[
    |D\bar u_2(s)| \lesssim |s|^{-2+\delta},
    \]
    for an arbitrarily small $\delta > 0$.
    \end{theorem}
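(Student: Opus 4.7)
The plan is to exploit the variational equation satisfied by $\bar u_0$ from Theorem~\ref{thm:ubar0} and rewrite it, via the definition $\bar u_2 = \bar u_0 - \hat u_1 - \hat u_2$, as a linear discrete PDE for $\bar u_2$ with a residual that decays sufficiently fast; the pointwise bound on $D\bar u_2$ is then extracted by inverting through the lattice Green's function of Section~\ref{sec:higher-order-G}.

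Concretely, starting from the Euler-Lagrange equation $\delta \E(\bar u_0)[v] = 0$ for all $v \in \Hcc$ and substituting $\bar u_0 = \hat u_1 + \hat u_2 + \bar u_2$, I would Taylor expand $\nabla V(D\hat u_0 + D\hat u_1 + D\hat u_2 + D\bar u_2)$ around $D\hat u_0$ up to sixth order. After rearrangement this yields an identity of the schematic form
\[
\mathcal{L} \bar u_2 \;=\; f \;+\; \mathcal{N}(\bar u_2),
\]
where $\mathcal{L}$ is the homogeneous Hessian at zero displacement, $f$ depends only on $\hat u_0, \hat u_1, \hat u_2$, and $\mathcal{N}$ absorbs the remaining nonlinear coupling in $\bar u_2$. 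By construction, $\hat u_1$ and $\hat u_2$ are designed to cancel the leading two orders of the residual via the corrector equation \eqref{eq:u1PDE}, so $f$ decomposes naturally into (i) Taylor-remainder contributions of $V$ evaluated at $D\hat u_0 \sim |\ell|^{-1/2}$, (ii) consistency errors between finite differences and continuum derivatives acting on the smooth predictors, and (iii) crack-face contributions supported on $\Gamma$. Each should decay at least like $|\ell|^{-3}$ up to logarithmic factors. The a priori bound $|D\bar u_2(s)| \lesssim |s|^{-3/2+\delta}$, inherited from Theorem~\ref{thm:ubar0} together with the explicit decay of $D\hat u_1$ and $D\hat u_2$, then ensures that $\mathcal{N}(\bar u_2)$ contributes only subleading forcing which may be folded into $f$ during a bootstrap.

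With $f$ controlled, the next step is to represent $\bar u_2$ (up to a harmless shift) as a convolution of $f$ against the lattice Green's function $G$ of $\mathcal{L}$, and then to apply the pointwise bound on $DG$ coming from the higher-order expansion in Section~\ref{sec:higher-order-G}. Splitting the resulting sum into contributions from the near-field of $s$, the near-field of the crack tip, and the far-field, and combining the Green's function decay with the $|\ell|^{-3}$-type decay of $f$, should deliver the claimed estimate $|D\bar u_2(s)| \lesssim |s|^{-2+\delta}$.

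The principal obstacle is the non-translation-invariance of $G$ in the anti-plane crack geometry: the multipole-type arguments of \cite{BHO22} rely crucially on translation invariance and homogeneous decay of $DG$, neither of which is available here because the complex square root mapping $\omega$ distorts the lattice too strongly near the crack tip. The discrete geometry predictor of Section~\ref{sec:higher-order-G} is precisely what makes an analogous $|DG|$ bound available, and verifying that its convolution with $f$ produces the sharp $|s|^{-2+\delta}$ rate, rather than a weaker one, is the technical heart of the argument. A secondary concern is tracking logarithmic factors with enough care that the $\delta>0$ loss arises only at summation endpoints and does not accumulate through the bootstrap iteration.
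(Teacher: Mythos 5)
Your overall framework — express the corrector equation in residual form, invert through the lattice Green's function, and estimate the resulting sum — is indeed the right one, and it matches the paper's strategy of first establishing $D\bar u_1(s) = \sum_m g_1(m)\cdot D_1 D_2 G(m,s)$ via Lemma~\ref{lem:stressdecay}. However, there is a genuine gap in your treatment of $\hat u_2$.

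You claim that ``$\hat u_1$ and $\hat u_2$ are designed to cancel the leading two orders of the residual via the corrector equation \eqref{eq:u1PDE}, so $f$ ... should decay at least like $|\ell|^{-3}$''. This is not how $\hat u_2$ works. Only $\hat u_1$ is a corrector in the PDE sense: it solves \eqref{eq:u1PDE} and cancels the leading nonlinear term, pushing the forcing from $|m|^{-3/2}$ ($g_0$) down to $|m|^{-5/2}\log|m|$ ($g_1$, cf.\ Lemma~\ref{lem:stressdecay}). The predictor $\hat u_2 = C\,\omega_2(x)/|x|$ is harmonic with homogeneous Neumann data, so its own linear consistency error decays like $|\nabla^3\hat u_2|\sim|m|^{-7/2}$, and including it in $f$ does \emph{not} improve the pointwise rate $|f(m)|\lesssim|m|^{-5/2}\log|m|$ inherited from $g_1$. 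A naive convolution of $f$ against $D_1D_2 G$ therefore only produces $\mathcal{O}(|s|^{-3/2})$, because the near-tip contribution $\sum_{|m|\lesssim|s|}g_1(m)\cdot D_1D_2 G(m,s)$ accumulates there rather than being killed by pointwise decay of $f$.

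What is actually needed, and what the paper does, is a \emph{moment} subtraction rather than a residual-decay improvement: for $|m|\ll|s|$ one has $D_1D_2\hat G_0(m,s) = D\omega_2(m)\cdot D_2\hat G_1^s(s) + \mathcal{O}(|s|^{-2})$, so the leading $\mathcal{O}(|s|^{-3/2})$ term of the convolution factors into a fixed lattice sum $\sum_m g_1(m)\cdot(D\hat G_1^m(m)+D\omega_2(m))$ times $D_2\hat G_1^s(s)$. The constant $C$ in $\hat u_2$ (see \eqref{def-hatu_2_proofs}) is precisely this sum, so subtracting $\hat u_2$ from $\bar u_1$ removes the near-tip accumulation. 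Your proposal omits this multipole identification and replaces it with a decay assertion for $f$ that does not hold; consequently the bound $|s|^{-2+\delta}$ would not follow from the stated plan. Correcting it requires precisely the Taylor expansion of $\hat G_0$ about the source and the identification of the moment — together with Theorem~\ref{thm-Gbar1-decay} to control the remainder $\bar G_{1,\mu}$ in the near-tip region — which are the substantive steps of the paper's proof.
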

    For proof, see Section~\ref{sec:proofs-atom-model}. For now we merely state the formulae for $\hat u_1$ and $\hat u_2$ and defer the discussion on the derivation to the proof. In polar coordinates $x = r(\cos\theta,\sin\theta)$, the predictor $\hat u_1$ is given by 
    \begin{equation}\label{eqn-hatu1-formula}
        \hat u_1(x) := -\frac{K^3}{64} \phi^{(iv)}(0) r^{-1/2}\Big( \log r \sin \tfrac{\theta}{2} + \frac{1}{6} \sin\tfrac{5\theta}{2}\Big)
    \end{equation} 
    \begin{equation}\label{eqn-hatu2-simple-formula}
    \hat u_2(x) = C\frac{ \omega_2(x)}{\lvert x \rvert} = C r^{-1/2} \sin \tfrac{\theta}{2},
    \end{equation}
    where $C$ is a specific constant obtained from a summation over the whole lattice of terms related to a higher-order expansion of the lattice Green's function, which we will discuss in Section~\ref{sec:higher-order-G}. The precise formula for $\hat u_2$ is given in \eqref{def-hatu_2_proofs}.
    \begin{remark}\label{rem-incomplete-sinclair}
        A flexible boundary approach to atomistic modelling of fracture, as introduced by Sinclair in \cite{sinclair1975influence} and also discussed in \cite{sinclair1978flexible} (Section II.C, the \emph{Flex-S} approach) concerns dividing the computational domain into a core atomistic region, where atoms are free to vary and a transition and a far-field region, in which atoms are prescribed to follow a displacement determined by a truncated series of higher-order continuum expansion. In our case of an anti-plane model with a pair potential (thus ensuring isotropy), this expansion, as presented in \cite{stroh1958dislocations}, is given by, for $x = r(\cos\theta,\sin\theta)$ and the corresponding complex number $z = r\cos\theta + i r\sin\theta$,
        \[
        \sum_{j=0}^N c_j {\rm Imag}(z^{(\frac12-j)}) = \sum_{i=0}^N c_j r^{(\frac12-j)}\sin\left(\frac{(1-2j)\theta}{2}\right),
        \]
        The incompatibility of forces in the transition region is resolved by finding optimal prefactors $c_i$ which minimise the generalised forces, which are the variations with respect to the coefficients of atoms in the transition region. For this approach to be accurate, it is crucial that the expansion correctly captures the asymptotic behaviour of the equilibrium displacement.  In particular in the Sinclair's ansatz, the second term in the expansion, with behaviour of order $\mathcal{O}(r^{-1/2})$, is given by $c_1 r^{-1/2}\sin(-\theta/2)$ and the third, capturing $\mathcal{O}(r^{-3/2})$ behaviour, by $c_2 r^{-3/2}\sin(-3\theta/2)$. Our result shows that, up to log terms, the $\mathcal{O}(r^{-1/2})$ behaviour includes two extra terms that are missing from the Sinclair ansatz, namely 
        \[
        c_{1,2} \left(r^{-1/2}\log r \sin \tfrac{\theta}{2} + r^{-1/2} \tfrac{1}{6} \sin\tfrac{5\theta}{2}\right).
        \]
        They arise from the nonlinearity of the interaction potentials.
        We have thus established that the Sinclair-type approach to flexible boundary conditions is incomplete. To fix it, one should also optimise the prefactor $c_{1,2}$.
    \end{remark}
    \subsection{Higher order expansion of the Green's function}\label{sec:higher-order-G}
    To prove Theorem~\ref{thm:ubar2_decay}, a higher-order development for the associated lattice Green's function is also needed. We recall from \cite{2018-antiplanecrack} that the lattice Green's function
    \[
    G\,\colon\, \La \times \La \to \R
    \]
    corresponding to the model described in Section~\ref{sec:model} is defined as the solution to the pointwise discrete PDE
    \begin{equation}\label{eqn-G-pde}
        -{\rm Div}DG(m,s) = \delta(m,s), \quad G(m,s) = G(s,m),
    \end{equation}
    where the discrete divergence, which, for $g \, \colon\, \La \to \R^{\Rc}$, is defined as
    \begin{equation}\label{eqn-def-divergence}
        -{\rm Div}\,g(m) := \sum_{\rho \in \Rc} g_{\rho}(m-\rho) - g_{\rho}(m),
    \end{equation}
    and the gradient are both applied with respect to $m$. Note that if $g = Du$ for some $u\, \colon\,\La \to \R$, then the discrete divergence operator naturally respects the structure of $D$ defined in \eqref{eqn-def-Du}, in the sense that, for $u,v$ compactly supported, summation by parts holds, that is
    \[
    \sum_{m \in \La} (-{\rm Div}Du(m))v(m) = \sum_{m \in \La} Du(m) \cdot Dv(m) = \sum_{m\in \La}\sum_{\rho \in \Rc(m)}D_{\rho}u(m)D_{\rho}v(m).
    \]
    
    The following is already known.
    \begin{theorem}[adapted from \cite{2018-antiplanecrack}]\label{thm-G-old}
    There exists $G$ satisfying \eqref{eqn-G-pde}, such that, for any $\delta >0$, 
    \begin{equation}\label{eqn-d1d2G-old-decay}
    |D_1 D_2 G(m,s)| \lesssim (1+|\omega(m)||\omega(s)||\omega(m)-\omega(s)|^{2-\delta})^{-1},
    \end{equation}
    where $\omega$ is the complex square root mapping defined in \eqref{eqn-omega-def}. In particular, it admits a decomposition 
    \begin{equation}\label{eqn-G-old-decomp}
    G = \hat{G}_0 + \bar G_0,
    \end{equation}
    where 
    \begin{equation}\label{eqn-hatG0-formula}
    \hat{G}_0(m,s) = F(-\omega(m) + \omega(s)) + F(-\omega(m) + 
    \omega^*(s)), \quad F(x) = -\frac{1}{4\pi} \log|x|
    \end{equation}
    and $\bar G_0(\cdot,s) \in \Hcc$ (and hence, due to variable symmetry, in the other variable too). 
    \end{theorem}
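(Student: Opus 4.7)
The plan is to construct $G$ as a sum of an explicit continuum predictor plus a discrete corrector in $\Hcc$, following the standard strategy for lattice Green's functions in the presence of a defect. First I would take $\hat G_0$ as in \eqref{eqn-hatG0-formula}. The complex square root $\omega$ maps $\R^2\setminus\Gamma_0$ conformally onto the upper half-plane, and under this change of variables the anti-plane Laplacian on the slit plane with Neumann condition on $\Gamma_0$ transforms into $-\Delta$ on the upper half-plane with Neumann condition on the real axis. Since $F(x)=-\tfrac{1}{4\pi}\log|x|$ is the free-space fundamental solution of $-\Delta$, the method of images yields that $\hat G_0(\cdot,s)$ solves the \emph{continuum} analogue of \eqref{eqn-G-pde}, with the symmetry $\hat G_0(m,s)=\hat G_0(s,m)$ built in.

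Next I would place the discrete residual $\eta_s := -\mathrm{Div}\,D\hat G_0(\cdot,s) - \delta(\cdot,s)$ in $(\Hcc)^*$. Away from the tip this is a standard finite-difference consistency error, controlled by a Taylor expansion of $F\circ\omega$ together with the chain-rule bound $|D\omega(m)|\lesssim|\omega(m)|^{-1}$; near the tip, the definition of $\Rc(m)$ omits bonds crossing $\Gamma_0$, which is exactly consistent with the Neumann condition baked into $\hat G_0$ by the image construction, so no spurious contribution appears there. With $\eta_s\in(\Hcc)^*$ established, the Lax--Milgram lemma applied to the coercive bilinear form $(u,v)\mapsto\sum_m Du(m)\cdot Dv(m)$ on $\Hcc$ produces a unique $\bar G_0(\cdot,s)\in\Hcc$ satisfying $-\mathrm{Div}\,D\bar G_0(\cdot,s)=\eta_s$. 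Setting $G=\hat G_0+\bar G_0$ gives \eqref{eqn-G-pde} pointwise, and the symmetry $G(m,s)=G(s,m)$ follows from the symmetry of $\hat G_0$ together with the uniqueness of $\bar G_0$.

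For the pointwise bound \eqref{eqn-d1d2G-old-decay} I would separate the predictor and corrector contributions. The explicit piece $D_1 D_2 \hat G_0$ decays like $|D\omega(m)|\,|D\omega(s)|\,|\omega(m)-\omega(s)|^{-2}\lesssim|\omega(m)|^{-1}|\omega(s)|^{-1}|\omega(m)-\omega(s)|^{-2}$, which is exactly the claimed rate with $\delta=0$. The corrector piece $D_1 D_2 \bar G_0$ is more delicate: I would view $\bar G_0$ as the solution of a discrete elliptic equation whose forcing has controlled spatial decay inherited from the preceding analysis of $\eta_s$, transfer the problem into $\omega$-coordinates, and apply a discrete Calder\'on--Zygmund / De~Giorgi-type estimate to convert forcing decay into second-difference decay. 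The $\delta$ loss in the exponent reflects the borderline-critical nature of this step.

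The main obstacle will be precisely this last step. The crack tip is a genuine geometric singularity where $\omega$ fails to be Lipschitz, and the near-tip lattice geometry is distorted by the missing bonds along $\Gamma_0$, so the standard pointwise bounds from the homogeneous-lattice Green's function theory do not apply directly. Carefully transferring discrete regularity into $\omega$-coordinates, and controlling the interaction of the near-tip discrete geometry with the singular conformal map, is the technical heart of the argument --- and, as the introduction signals, is exactly the difficulty that forces the authors to introduce a separate ``discrete geometry predictor'' when pushing the expansion to higher order.
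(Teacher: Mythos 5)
This theorem is not proved in the present paper: it is explicitly labelled ``adapted from \cite{2018-antiplanecrack}'', and the text immediately afterwards states that establishing the decay rate for $\bar G_0$ ``is the main technical achievement of \cite{2018-antiplanecrack}'' via ``a rather involved argument bearing resemblance to arguments in the regularity theory for elliptic PDEs''. Your outline is a plausible reconstruction of that cited argument and matches the description the paper gives of it: predictor $\hat G_0$ via conformal map and images, corrector $\bar G_0$ via Lax--Milgram on $\Hcc$, decay via elliptic regularity. That said, a few points need fixing. First, $\omega$ maps $\R^2\setminus\Gamma_0$ onto the \emph{right} half-plane $\{\omega_1>0\}$ (since $\theta/2\in(-\tfrac{\pi}{2},\tfrac{\pi}{2})$ gives $\cos\tfrac{\theta}{2}>0$), and the image point $\omega^*(s)=(-\omega_1(s),\omega_2(s))$ is the reflection across the $\omega_2$-axis, not across the real axis; your ``upper half-plane'' is the wrong model. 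Second, to put the residual in $(\Hcc)^*$ it is not enough that it decay pointwise, since $\Hcc$-functions need not themselves lie in $\ell^2$; one must represent the weak residual as $-{\rm Div}$ of an $\ell^2$ stress field, which is precisely what the interpolation construction of Section~\ref{sec:interpol} does (cf.\ the definition of $\hat h_0$). Third, your claim that ``no spurious contribution appears'' at the tip is not quite right: the tip cell $Q_0$ produces nonzero corrections $C_{m+\rho,m}$ that must be tracked explicitly (see the formula for $\hat h_0$ and the $b(m,\rho)\subset Q_0$ case); they are bounded but not zero. Finally, you correctly identify that the decay of $D_1 D_2 \bar G_0$ is the genuine technical heart; your appeal to a ``discrete Calder\'on--Zygmund / De Giorgi-type estimate'' is a placeholder for that argument rather than a proof, but since the paper itself defers entirely to \cite{2018-antiplanecrack} for this step, this is a limitation you share with the source you are comparing against.
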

    We note that $\hat G_0$ satisfies the decay rate in \eqref{eqn-d1d2G-old-decay} with $\delta = 0$. Establishing the same rate of decay, up to an arbitrarily small $\delta > 0$, for $\bar G_0$ is the main technical achievement of \cite{2018-antiplanecrack} and follows from a rather involved argument bearing resemblance to arguments in the regularity theory for elliptic PDEs \cite{beck2016elliptic}.
    
    In the present work, we rely on this zero-order analysis and refine it by realising that the far-field predictor $\hat G_0$ is not enough to obtain a corrector which decays faster everywhere away from the source point $s$. In fact, as part of our analysis, we will show that the decay rate in \eqref{eqn-d1d2G-old-decay}, with $\delta=0$, is sharp for $\bar G_0$ when $m \in B_{\frac{|s|}{8}}(0)$.
    
    To remedy this, we introduce the novel notion of a \emph{discrete geometry predictor}, $\hat G_1$, which accounts for the fact that near the crack tip, the complex square root mapping $\omega$, present in the definition of $\hat G_0$, distorts the lattice too much for $\hat G_0$ to correctly capture the discrete effects there. In particular, we decompose the full lattice Green's function $G$ from Theorem~\ref{thm-G-old} as
    \begin{align*}
    G &= \hat G_0 + \bar G_0\\
    &=\hat G_0 + \hat G_1 + \bar G_1,
    \end{align*}
    noting that in other words we have $\bar G_0 = \hat G_1 + \bar G_1$.
    
    \begin{definition}[Discrete geometry corrector $\hat G_1$]\label{def-hatG1}
    A function $\hat G_1\,\colon\,\La \times \La \to \R$ is called the \emph{discrete geometry corrector} if, for all $m,s \in \La$,
    \[
    -{\rm Div}D\hat G_1(m,s) =  \frac{-2\omega_2(s)}{\abs{s}}{\rm Div}D\omega_2(m),
    \]
    where, again, we recall that  $\omega_2$ is the second component of the complex square root mapping \eqref{eqn-omega-def}, and furthermore, for any $s \in \La$, it holds that $\hat G_1(\cdot,s) \in \Hcc$.
    \end{definition}
    
    To motivate this definition, we recall the explicit formula for $\hat G_0$ given in \eqref{eqn-hatG0-formula} and Taylor expand both terms in $F$ around $\omega(s)$ to obtain 
    \[
    -{\rm Div}D[\hat G_0 + \hat G_1](m,s) = -{\rm Div} D\hat G_1(m,s) -{\rm Div} D\omega(m) \cdot \left(\nabla F(-\omega(s)) + F(-\omega^*(s))\right) + \mathcal{O}(|s|^{-1}).
    \]
    Noting that 
    \[
    \nabla F(-\omega(s)) + F(-\omega^*(s)) = \left[0, 2\nabla_2 F(-\omega(s))\right]^{\rm T}\quad \text{ and }\quad 2\left(\nabla F(-\omega(s))\right)_2 = \frac{-2\omega_2(s)}{\abs{s}}
    \]
    and that we would like the lowest order term to cancel out with $-{\rm Div}D\hat G_1(m,s)$, we arrive at the equation in Definition~\ref{def-hatG1}. Note that the problem of finding $\hat G_1$ is non-trivial since $\omega_2 \not\in \Hcc$.
    
    We prove the following.
    \begin{prop}\label{prop-ghat1}
        There exists a discrete geometry predictor $\hat G_1\,\colon\,\La \times \La \to \R$ in the sense of Definition~\ref{def-hatG1} and it admits a decomposition
        \begin{equation}\label{eqn-hat-G1_decomp}
            \hat G_1(m,s) = \hat G^m_1(m) \hat G^s_1(s),
        \end{equation}
        where $\hat G^s_1(s) = \frac{-2\omega_2(s)}{\abs{s}}$ and $G^m_1 \in \Hcc$ satisfies
        \[
        |D^i \hat G^m_1(\ell)| \lesssim |\ell|^{-1/2-i + \delta}
        \]
        for $i=0,1$.
    \end{prop}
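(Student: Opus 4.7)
The key observation is that the right-hand side of the equation in Definition~\ref{def-hatG1} factorises as $\hat G^s_1(s)\cdot g(m)$ with $\hat G^s_1(s):=-2\omega_2(s)/|s|$ and $g(m):={\rm Div}D\omega_2(m)$. The plan is therefore to try the separable ansatz $\hat G_1(m,s)=\hat G^m_1(m)\,\hat G^s_1(s)$, which, by linearity, reduces the whole proposition to constructing a single scalar $\hat G^m_1\in\Hcc$ solving
\[
-{\rm Div}D\hat G^m_1(m) = g(m) \qquad \text{in }\La,
\]
with pointwise decay $|D^i\hat G^m_1(\ell)|\lesssim |\ell|^{-1/2-i+\delta}$ for $i=0,1$.

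The first step is to control the source $g$. Off $\Gamma$ the stencil is full, so ${\rm Div}D$ coincides with the standard five-point discrete Laplacian, and since $\omega_2=\sqrt{r}\sin(\theta/2)$ is continuum-harmonic on $\R^2\setminus\Gamma_0$, a fourth-order Taylor expansion immediately yields $|g(m)|\lesssim |m|^{-9/2}$. On $\Gamma_+$, a direct expansion using the reduced stencil gives
\[
-{\rm Div}D\omega_2(m) = 2\bigl[\text{5-pt Laplacian at }m\bigr] - 2\bigl[\omega_2(m)-\omega_2(m-e_2)\bigr],
\]
and the antisymmetry $\omega_2(x_1,-x_2)=-\omega_2(x_1,x_2)$ turns $\omega_2(m)-\omega_2(m-e_2)$ into $2\omega_2(m)$, which exactly cancels the $\sqrt{|m|}$-sized piece inside the 5-pt Laplacian that arises from $\omega_2$ being discontinuous across the cut. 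The surviving remainder is $|g(m)|\lesssim |m|^{-3/2}$; the case $\Gamma_-$ is symmetric and $g$ is bounded near the origin, so $g\in\ell^1(\La)$ with mass concentrated on the one-dimensional crack line with an $|m|^{-3/2}$ envelope.

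Existence of $\hat G^m_1\in\Hcc$ then follows by applying Lax--Milgram to the coercive form $a(u,v)=\langle Du,Dv\rangle_{\ell^2(\La)}$ on $\Hcc/\R$; boundedness of the functional $v\mapsto \sum_m g(m)v(m)$ on the quotient uses the logarithmic two-dimensional discrete Sobolev-type bound $|v(m)-v(0)|\lesssim \sqrt{\log(2+|m|)}\,\|v\|_{\Hcc}$ together with the summability $\sum_{k\geq 1} k^{-3/2}\sqrt{\log k}<\infty$ along the one-dimensional crack and a trivial Cauchy--Schwarz bound off $\Gamma$. For the pointwise decay I would exploit the fact that $r^{-1/2}\sin(\theta/2)$ is itself continuum-harmonic and antisymmetric in $x_2$, so a computation analogous to that performed for $\omega_2$ yields $-{\rm Div}D[r^{-1/2}\sin(\theta/2)]\sim |m|^{-3/2}$ on $\Gamma$, with the same $|m|^{-9/2}$ off-crack decay. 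One can therefore choose a constant $c$ and write $\hat G^m_1 = c\,\chi(\ell)\, r^{-1/2}\sin(\theta/2) + w$, where $\chi$ is a smooth cutoff equal to $1$ at infinity and $0$ near the origin, arranging that the leading $|m|^{-3/2}$ crack contribution of $-{\rm Div}D[c\chi\, r^{-1/2}\sin(\theta/2)]$ matches that of $g$; the residual source for $w$ then decays strictly faster than $|m|^{-3/2}$, and standard decay estimates for $\Hcc$-solutions of discrete Poisson problems (via the Green's function of Theorem~\ref{thm-G-old} and the sharp bound \eqref{eqn-d1d2G-old-decay}) give $w$ of strictly lower order. The claimed asymptotics for $\hat G^m_1$ and $D\hat G^m_1$ then follow from the explicit $c\,r^{-1/2}\sin(\theta/2)$ contribution, with the $\delta$-loss absorbing the logarithmic corrections produced by the borderline summations.

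The hard part is exactly this borderline regime: the $|n|^{-3/2}$ source concentrated on the one-dimensional set $\Gamma$ sits precisely at the edge of $\ell^2$-summability, and its pairing with the logarithmic lattice Green's function is what forces the arbitrarily small $\delta$-loss in the decay rate. This is the same mechanism that makes $\hat G_1$ a genuinely necessary correction on top of the continuum predictor $\hat G_0$ in the decomposition of Section~\ref{sec:higher-order-G}, and is what prevents any more elementary argument that would avoid the delicate Green's function machinery inherited from \cite{2018-antiplanecrack}.
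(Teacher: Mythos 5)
Your overall plan (factorise, solve $-{\rm Div}D\hat G^m_1 = {\rm Div}D\omega_2$, then extract decay) is correct in outline, but there is a concrete error in the source estimate and the argument misses the paper's key shortcut.

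The error: you claim $|{\rm Div}D\omega_2(m)| \lesssim |m|^{-3/2}$ on $\Gamma$ after cancelling the $\sqrt{|m|}$ jump. In fact the correct decay is $|{\rm Div}D\omega_2(m)| \lesssim |\nabla^4 \omega_2(m)| \lesssim |m|^{-7/2}$ (this is exactly Lemma~\ref{lem:Hf-Gamma0}). Your Taylor expansion stops too early: after the antisymmetry cancels the $\mathcal{O}(\sqrt{|m|})$ piece, one still has $\mathcal{O}(|\nabla^2 \omega_2|) \sim |m|^{-3/2}$ and $\mathcal{O}(|\nabla^3 \omega_2|) \sim |m|^{-5/2}$ contributions, but these \emph{also} cancel, the first via harmonicity ($\Delta\omega_2 = 0$) and the Neumann boundary condition $\partial_2 \omega_2 = 0$ on $\Gamma_0$, the second because $\partial_{222}^3 \omega_2 = -\partial_1^2 \partial_2 \omega_2 = 0$ on $\Gamma_0$. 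This matters for your narrative: the source is \emph{not} at the ``edge of $\ell^1$-summability'' on $\Gamma$ -- it is comfortably summable -- so your explanation that the $\delta$-loss is ``forced'' by a borderline pairing with the log Green's function is not correct. (The $\delta$-loss in the proposition is inherited from the decay estimate \eqref{eqn-Dbaru0-K} for $\bar u_0$, i.e.\ from the Green's-function machinery of \cite{2018-antiplanecrack}, and is not produced by the argument you sketch.)

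The missed idea: the paper observes that $-{\rm Div}D\hat G^m_1 = {\rm Div}D\omega_2$ is \emph{exactly} the Euler--Lagrange equation for the corrector $\bar u_0$ in Theorem~\ref{thm:ubar0} when the pair potential is quadratic, $\phi(r)=\tfrac12 r^2$ (so that $K$ can be scaled away and no smallness is needed). Existence and the $i=1$ estimate $|D\hat G^m_1(\ell)|\lesssim|\ell|^{-3/2+\delta}$ are then read off directly from Theorem~\ref{thm:ubar0}/\eqref{eqn-Dbaru0-K}, and the $i=0$ estimate follows from Lemma~\ref{lem:decay_with_sobolev}. Your from-scratch route via Lax--Milgram plus a matched ansatz $c\chi(\ell)\,r^{-1/2}\sin(\theta/2) + w$ is essentially trying to rederive a substantial portion of the $\bar u_0$ analysis; even if it could be completed, the matching step is left as a sketch (you would need to make precise what ``leading $|m|^{-3/2}$ crack contribution'' means, and with the corrected source decay that contribution is not $|m|^{-3/2}$ at all -- indeed $r^{-1/2}\sin(\theta/2)$ satisfies the same PDE structure as $\omega_2$, so its discrete Laplacian on $\Gamma$ decays like $|m|^{-9/2}$). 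The proposal as written is therefore neither correct nor a faithful alternative proof.
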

    The proof will be presented in Section~\ref{sec:proof-prop-ghat1}.
    
    As the name implies, the discrete geometry corrector is specifically important when $\lvert m \rvert$ is small and thus close to the crack tip. To account for that we additional introduce a cutoff function $\mu$ (see \eqref{eqn-def-mu} for a precise definition) and adjust $\hat G_1$ to
    
    \begin{equation}
        \hat G_{1, \mu}(m,s) = \mu(m,s) \hat G^m_1(m) \hat G^s_1(s).
    \end{equation}
    In particular, we then have the remainder $\bar G_{1, \mu}(m,s)$ defined by
    \begin{equation}
        G = \hat G_0 + \hat G_{1, \mu}+ \bar G_{1, \mu}.
    \end{equation}
    This does indeed allow us to resolve the terms on order $O(\lvert s \rvert^{-3/2})$ as we state in the following theorem.
    \begin{theorem}\label{thm-Gbar1-decay}
        It holds that
        \begin{equation}  \label{eq-mixedGbar-smallm}
            \lvert D_1 D_2 \bar{G}_{1, \mu}(\ell,s) \rvert \lesssim \lvert s\rvert^{-2+\delta} \lvert \ell \rvert^{-1/2}
        \end{equation}
    for $\lvert \ell \rvert \leq \lvert s \rvert/16$.
    \end{theorem}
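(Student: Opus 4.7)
The plan is to set up a Poisson-type equation for $\bar G_{1,\mu}(\cdot, s)$ whose right-hand side encodes the cancellation built into the definition of $\hat G_1$, and then to estimate $D_1 D_2 \bar G_{1,\mu}$ via the full lattice Green's function. Since $G = \hat G_0 + \hat G_{1,\mu} + \bar G_{1,\mu}$, the corrector $\bar G_{1,\mu}(\cdot, s) \in \Hcc$ satisfies
\[
-{\rm Div}_m D_m \bar G_{1,\mu}(m,s) \;=\; f_s(m) \;:=\; \delta_{m,s} + {\rm Div}_m D_m \hat G_0(m,s) + {\rm Div}_m D_m \hat G_{1,\mu}(m,s).
\]
In the region $|m| \leq |s|/16$ the delta term vanishes. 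Taylor expanding $F$ in the definition of $\hat G_0$ in powers of $\omega(m)$ around $\omega(s)$ and $\omega^*(s)$, the constant-in-$\omega(m)$ terms contribute nothing to $-{\rm Div}_m D_m$, while the linear terms reproduce $-{\rm Div}_m D_m \omega(m) \cdot \bigl(\nabla F(-\omega(s)) + \nabla F(-\omega^*(s))\bigr)$. By the computation given just before Definition~\ref{def-hatG1}, this is precisely what $-{\rm Div}_m D_m \hat G_1(m,s)$ cancels, and the residual in $f_s$ is then controlled by the quadratic-and-higher Taylor remainder, whose coefficients carry $|\nabla^{k} F(\omega(s))| \lesssim |s|^{-k/2}$, together with commutator terms produced by the cutoff $\mu$ on its transition annulus.

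Next I would apply $D_{s,\sigma}$ to the equation to obtain $-{\rm Div}_m D_m [D_{s,\sigma} \bar G_{1,\mu}(m,s)] = D_{s,\sigma} f_s(m)$, which is valid because both $\bar G_{1,\mu}(\cdot, s)$ and $\bar G_{1,\mu}(\cdot, s + \sigma)$ lie in $\Hcc$. The Green's function representation then yields
\[
D_{\ell,\rho} D_{s,\sigma} \bar G_{1,\mu}(\ell,s) \;=\; \sum_{m'} D_{\ell,\rho} G(\ell, m') \, D_{s,\sigma} f_s(m'),
\]
and the $D_{s,\sigma}$ operation further improves the $s$-decay of each remainder term in $f_s$; combined with the leading-order cancellation this is the mechanism producing the gain from the $\bar G_0$ rate $|s|^{-3/2}$ to the target $|s|^{-2+\delta}$. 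The sum would then be split into regions: (a) $m'$ near the source $s$, handled by direct pointwise bounds from Theorem~\ref{thm-G-old}; (b) the bulk far from both $\ell$ and $s$, where the Taylor-remainder bound on $D_{s,\sigma} f_s(m')$ combined with the mixed-derivative decay of $G$ yields the claimed $|s|^{-2+\delta} |\ell|^{-1/2}$; (c) the transition annulus of $\mu$, where the commutator terms are estimated using Proposition~\ref{prop-ghat1} for $\hat G_1^m$; and (d) the region near the crack tip, where the discrete consistency of $\omega$ provides enough decay to keep the sum convergent.

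The main obstacle I expect is the careful bookkeeping of the Taylor remainder analysis: unlike in the spatially homogeneous case treated in prior work, the composition $F(-\omega(m) + \omega(s))$ couples the far-field geometry of $\omega$ with its singular behaviour near the crack tip, so the resulting remainders mix $\nabla^k F$ evaluated at $\omega(s)$ with discrete corrections of $\omega(m)$ whose control near the origin is nontrivial, and one must track these through the cutoff before summing against the non-homogeneous bound of Theorem~\ref{thm-G-old} (which itself degenerates along the set $|\omega(\ell)| = |\omega(m')|$). A secondary difficulty is the cutoff commutator: since $\hat G_{1,\mu}$ equals $\hat G_1^m \hat G_1^s$ only where $\mu \equiv 1$, the leading-order cancellation in $f_s$ is only approximate on the transition annulus, and the resulting error must be absorbed using the pointwise bounds of Proposition~\ref{prop-ghat1} --- this is where the arbitrarily small loss $\delta > 0$ in the final estimate arises.
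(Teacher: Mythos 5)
Your high-level strategy---set up the Poisson equation $-{\rm Div}_m D_m \bar G_{1,\mu}(\cdot,s) = f_s$, observe that $\hat G_1$ cancels the linear-in-$\omega(m)$ term in the Taylor expansion of $-{\rm Div}_m D_m\hat G_0(\cdot,s)$, and feed the residual into a Green's function representation with source at $\ell$---is the paper's strategy. But there is one load-bearing observation your remainder analysis omits, and a second structural issue in how you set up the representation.

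The omission is the exact vanishing of the quadratic term of the Taylor expansion. You propose to control the quadratic-and-higher remainder using only $|\nabla^k F(\omega(s))| \lesssim |s|^{-k/2}$. Under that accounting, the quadratic term contributes, after $D_{s,\sigma}$ and $-{\rm Div}_m D_m$, a residual of size roughly $|s|^{-2}|m|^{-1}$ for $|m| \leq |s|^{1/2}$ (the coefficient carries $|s|^{-2}$, while $-{\rm Div}_m D_m [\omega(m)]^2$ generically behaves like $\Delta|m| \sim |m|^{-1}$). Summing this against $|D_1 G(\ell,m)| \lesssim (|\omega(\ell)|\,|\omega(\ell)-\omega(m)|^{1-\delta})^{-1}$ over $|m|\leq |s|^{1/2}$ gives, in the regime $|\ell| \lesssim |s|^{1/2}$, a contribution of order $|s|^{-7/4+\delta}|\ell|^{-1/2}$ from the far part of the sum---larger than the target $|s|^{-2+\delta}|\ell|^{-1/2}$ by a positive power of $|s|$. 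The estimate only closes because the quadratic term is \emph{exactly} annihilated by $-{\rm Div}_m D_m$: as worked out in the proof of Lemma~\ref{lem:HD2Gbar1cut} (the $T_2$ term), the contraction $\tfrac12(\nabla^2 F(\omega(s))+\nabla^2 F(\omega^\ast(s)))[\omega(m)]^2$ reduces to $-C\,m_1 s_1/|s|^2$. Two facts are used: the $|m|$-part has coefficient $\operatorname{tr}\nabla^2 F = \Delta F = 0$ because $F$ is harmonic, and the remaining piece is affine in $m$, so its \emph{discrete} Laplacian is exactly zero. Neither follows from the magnitude bound you quote, and without this cancellation the dominant residual is $|s|^{-2}|m|^{-1}$ rather than the $|s|^{-5/2}|m|^{-1/2}$ coming from the cubic remainder, which is what the rest of the proof requires.

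The structural issue is the unconditional sum $\sum_{m'} D_{\ell,\rho}G(\ell,m')\, D_{s,\sigma} f_s(m')$ over all of $\La$. The contribution from $m'$ near $s$---where $D_{s,\sigma} f_s$ contains shifted Kronecker deltas and the interpolation defect of the predictor $\hat G_0$, which is log-singular on the diagonal---is not small term by term; the pieces are of size $|\ell|^{-1/2}|s|^{-3/2+\delta}$ or larger and rely on cancellations you do not exhibit. The paper sidesteps this by inserting a lattice cutoff $\eta$ supported in $|m|\lesssim |s|/4$, rewriting via Lemma~\ref{lem:pushing_eta}, and thereby never summing the residual near $s$ at all. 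The two annulus boundary terms this produces close only after subtracting the mean $A(s)=(D_s\bar G_0(\cdot,s))_{\mathcal A_s}$, applying the Poincar\'e inequality of Lemma~\ref{lem:discretePoincare}, and invoking the improved annulus estimate $|D_1 D_2\bar G_0(\ell,s)|\lesssim |s|^{-3+\delta}$ from Lemma~\ref{lem:Gbaroldannulus}; without the mean subtraction the boundary term of $T_3$-type is only $O(|\ell|^{-1/2}|s|^{-3/2+\delta})$, which fails. Your region split (a)--(d) is in the right spirit, but these two mechanisms---the trace-zero/affine vanishing of the quadratic term and the cutoff-plus-Poincar\'e boundary estimate---are the parts that actually make the rate $|s|^{-2+\delta}|\ell|^{-1/2}$ attainable and need to be made explicit.
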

    See Section \ref{sec:proofsgreensfunction} for the proof.

    %%%%%%%%%%%%%%%%%%%%%%%%%%%%%%%%%%%%%%%%%%%%%%%%%%%%%%%%%%%%
    %%%%%%%%%%%%%%%%%%%%  NEW SECTION   %%%%%%%%%%%%%%%%%%%%%%%%
    %%%%%%%%%%%%%%%%%%%%%%%%%%%%%%%%%%%%%%%%%%%%%%%%%%%%%%%%%%%%
    
    \section{Numerical approximation}\label{sec:numerics}
    In this section we discuss numerical approximations to the infinite lattice problem from Theorem~\ref{thm:ubar0} and present how the results of Section~\ref{sec:higher_pred_u} can be used to supply more accurate boundary conditions for numerical simulations. The setup largely mimics the one presented in \cite{2017-bcscrew,2018-antiplanecrack} and is as follows.
    
    We introduce a generalised energy difference functional
    \[
        \E(u_{\rm pred}^{(i)}, u) := \sum_{m \in \La}\sum_{\rho \in \Rc(m)} \phi(D_{\rho}u_{\rm pred}^{(i)}(m) + u(m)) - \phi(D_{\rho} \hat u_0(m)),
    \]
    where $u_{\rm pred}^{(i)} := \sum_{k=0}^i \hat u_k$ and we will directly compare the cases when $i=0,1,2$. It is immediate that finding
    \begin{equation}\label{eqn-baru_i-problem}
    \bar u_i \in {\rm arg min}\{ \E(u_{\rm pred}^{(i)}, u) \mid u \in \Hcc\}
    \end{equation}
    is equivalent to finding ${\rm arg min}_{u \in \Hcc} \E(u)$ defined in \eqref{eqn-E-def} via the the expansion \eqref{eqn-u-expansion}.
    
    The object of our study in this section is a supercell approximation to \eqref{eqn-baru_i-problem} constrained to a finite domain $\La_R := B_R \cap \La$ and with $u_{\rm pred}^{(i)}$ prescribed as a boundary condition on $\La \setminus \La_R$. It can be written as a Galerkin approximation given by 
    \begin{equation}\label{eqn-baru_iR-problem}
    \bar u_i^R \in {\rm argmin}\{ \E(u_{\rm pred}^{(i)}, u) \mid u \in \mathcal{H}^0(\La_R)\},
    \end{equation}
    where
    \[
    \mathcal{H}^0(\La_R) := \{ v\, \colon\, \La \to \R \mid v = 0\; \text{ in } \La \setminus \La_R\}.
    \]
    The following result is an immediate consequence of Theorem~\ref{thm:ubar2_decay}, with the proof as in \cite[Theorem~3.1]{2017-bcscrew}, which in itself in a concise restatement of a corresponding result in \cite{EOS2016}.
    \begin{theorem}
        Let $\bar u_2$ be a strongly stable solution to \eqref{eqn-baru_i-problem}, meaning that there exists some $C>0$ such that 
        \[
            \delta_u^2 \E(u_{\rm pred}^{(i)}, u)[v,v] \geq C\|v\|_{\Hcc}^2,
        \]
        for all $v$ with compact support. Then there exist $C, R_0 > 0$, such that for all $R > R_0$ there exists a stable solution $\bar u_i^R$ to \eqref{eqn-baru_iR-problem} satisfying 
        \[
        \|\bar u_2^R - \bar u_2\|_{\Hcc} \lesssim R^{-1}.
        \]
    \end{theorem}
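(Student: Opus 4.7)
The plan is to follow the standard Galerkin-plus-quantitative-implicit-function-theorem framework developed in \cite{EOS2016} and concisely restated in \cite[Theorem~3.1]{2017-bcscrew}. Since Theorem~\ref{thm:ubar2_decay} already supplies the sharp pointwise decay of $D\bar u_2$, the argument reduces to three standard steps: construction of a good trial function in $\mathcal{H}^0(\La_R)$, a best-approximation and consistency estimate, and an application of a quantitative implicit function theorem based on the strong-stability hypothesis.

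First I would construct the trial function. Introduce a cutoff $\eta_R\,\colon\,\La\to[0,1]$ with $\eta_R\equiv 1$ on $\La_{R/2}$, $\eta_R\equiv 0$ on $\La\setminus \La_R$ and $|D\eta_R|\lesssim R^{-1}$, and set $u_R^{\ast} := \eta_R \bar u_2$, after fixing the additive constant in $\bar u_2$ by demanding $\bar u_2(m)\to 0$ as $|m|\to\infty$. This is permissible because $\|\cdot\|_{\Hcc}$ is only a semi-norm and $|D\bar u_2|\lesssim |\cdot|^{-2+\delta}$ is summable along any radial path to infinity, which by telescoping also yields the pointwise bound $|\bar u_2(m)|\lesssim |m|^{-1+\delta}$. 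Applying the discrete Leibniz rule and splitting the $\ell^2(\La)$-mass of $D(u_R^{\ast}-\bar u_2)$ into the annulus $R/2\leq |m|\leq R$, where $D\eta_R\cdot\bar u_2(\cdot+\rho)$ dominates, and the tail $|m|\geq R/2$, where $(1-\eta_R)D\bar u_2$ does, will give
\[
\|u_R^{\ast}-\bar u_2\|_{\Hcc} \lesssim R^{-1+\delta}.
\]

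Next I would bound the consistency residual. Since $\delta_u\E(u_{\rm pred}^{(2)},\bar u_2)=0$ and $\delta_u^2\E$ is uniformly bounded as a bilinear form on $\Hcc\times\Hcc$ along any bounded segment (because $\phi\in C^k$ with $k\geq 6$ and $D$ acts locally), a first-order Taylor expansion along the segment from $\bar u_2$ to $u_R^{\ast}$ gives
\[
\|\delta_u\E(u_{\rm pred}^{(2)},u_R^{\ast})\|_{(\Hcc)^{\ast}} \lesssim \|u_R^{\ast}-\bar u_2\|_{\Hcc} \lesssim R^{-1+\delta}.
\]
Continuity of $\delta_u^2\E$ at $\bar u_2$ together with $u_R^{\ast}\to\bar u_2$ in $\Hcc$ transfers strong stability to $u_R^{\ast}$ for $R\geq R_0$ large enough, so in particular $\delta_u^2\E(u_{\rm pred}^{(2)},u_R^{\ast})[v,v]\geq \tfrac{C}{2}\|v\|_{\Hcc}^2$ for all $v\in\mathcal{H}^0(\La_R)$. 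The quantitative Newton--Kantorovich step from \cite[Lemma~2.2]{EOS2016} then produces a locally unique stable solution $\bar u_2^R\in\mathcal{H}^0(\La_R)$ of \eqref{eqn-baru_iR-problem} with $\|\bar u_2^R-u_R^{\ast}\|_{\Hcc}$ controlled by the consistency residual, and a triangle inequality closes $\|\bar u_2^R-\bar u_2\|_{\Hcc}\lesssim R^{-1+\delta}$, i.e.\ $\lesssim R^{-1}$ up to the arbitrarily small algebraic loss inherited from Theorem~\ref{thm:ubar2_decay}.

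The main obstacle is not the implicit function theorem itself, which is by now standard, but the technical point that one needs a pointwise decay bound for $\bar u_2$ (not only for $D\bar u_2$) in order to control the Leibniz term $D\eta_R\cdot\bar u_2$ on the annulus. Fixing the gauge of $\bar u_2$ at infinity is what makes this possible and is the one step requiring care; once it is in place, all remaining ingredients are standard in the atomistic coarse-graining literature and the proof essentially amounts to citing \cite{EOS2016,2017-bcscrew}.
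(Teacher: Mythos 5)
Your proposal is correct and takes essentially the same route as the paper: the paper simply delegates the argument to the standard trial-function, best-approximation and quantitative-implicit-function-theorem scheme of \cite[Theorem~3.1]{2017-bcscrew} (itself a restatement of \cite{EOS2016}), with Theorem~\ref{thm:ubar2_decay} supplying the decay of $D\bar u_2$. You correctly observe that this yields $R^{-1+\delta}$ rather than $R^{-1}$ exactly; the paper's stated rate should be read as absorbing the arbitrarily small $\delta$, just as the analogous $\beta$-loss appears explicitly in the zero-order result of \cite[Theorem~2.8]{2018-antiplanecrack}. One small remark: the pointwise bound on $\bar u_2$ you derive by fixing the gauge at infinity is convenient but not strictly necessary, as one could instead subtract the annular mean and invoke the Poincar\'e inequality on the sliced annulus (Lemma~\ref{lem:discretePoincare}) to control the Leibniz term, exactly as is done in the proof of Lemma~\ref{lem:density}; either way the same rate results.
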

    This is a significant improvement upon the known convergence rate $\|\bar u_0^R - \bar u_0\|_{\Hcc} \lesssim R^{-1/2 + \beta}$, where $\beta > 0$ is arbitrarily small, as established in \cite[Theorem~2.8]{2018-antiplanecrack}.
    
    \subsection{Numerical tests}
    Based on the presented numerical approximation framework, we now discuss numerical tests we have conducted, through which we numerically verify the result of Theorem~\ref{thm:ubar2_decay}. We also do a slightly broader comparison in which the boundary conditions are set to be given by (i) $\hat u_0$, (ii) $\hat u_0 + \hat u_1$ and (iii) $\hat u_0 + \hat u_1 + \hat u_2$.
    
    In particular, we consider a finite domain $\La \cap B_R$ with $R=256$, resulting in 200772 atoms being simulated and focus on the case where the pair potential is given by
    \[
    \phi(r) = \frac16\left(1-\exp(-3r^2)\right),
    \]
    as considered in \cite{2018-antiplanecrack,2019-antiplanecrack,BK2021}. We further prescribe the stress intensity factor $K$, which enters as a prefactor in $\hat u_0$ (c.f. \eqref{eqn-def-uhat0}) and in $\hat u_1$ (c.f. \eqref{eqn-hatu1-formula}), to be $K=0.4$, which is below the critical $K_G \approx 0.49$ reported in \cite[Table 1]{BK2021}, thus ensuring the crack remains at the centre of the domain. 
    
    To prescribe the full first order predictor $u_{\rm pred}^{(2)} = \hat u_0 + \hat u_1 + \hat u_2$, we need to compute the constant entering as a prefactor in the formula for $\hat u_2$ in \eqref{eqn-hatu2-simple-formula}. We obtain it with a bisection approach, which is sufficient for our purposes, but it is certainly the aim to make this procedure automated, e.g. by combining it with the NCFlex scheme from \cite{BK2021} and treating this constant as an extended variable of the system. 
    
    To find correctors $\bar u_i$ we, we employ the Julia library \textsc{Optim.jl} \cite{mogensen2018optim} with their implementation of the Conjugate-Gradient algorithm with line searches, terminating at $\ell_{\infty}$-residual of $10^{-8}$.
    
    To elucidate the intermediate impact of introducing the predictor $\hat u_1$, not only do we present the decay of the corrector $|D\bar u_i|$, but also of the forces when atoms are displaced according to $u = u_{\rm pred}^{(i)}$, which is given by  $|{\rm Div}\nabla V(Du_{\rm pred}^{(i)})|$, where we use the concise notation
    \[
    V(Du(m)) = \sum_{\rho \in \Rc} \phi((Du(m))_{\rho}).
    \]
    Finally, we also plot the decay of the linear residual $|{\rm Div} \nabla^2V(0)D\bar u_i(m)|$, where we note that since $\phi''(0) = 1$, we have $\nabla^2V(0) = {\rm Id}$. The results are presented in Figure~\ref{fig1}.
    
    \begin{figure}[htbp]
        \begin{subfigure}[c]{0.31\textwidth}
            \centering
                    {{\includegraphics[trim = 0 0 0 0,clip,width=\textwidth]{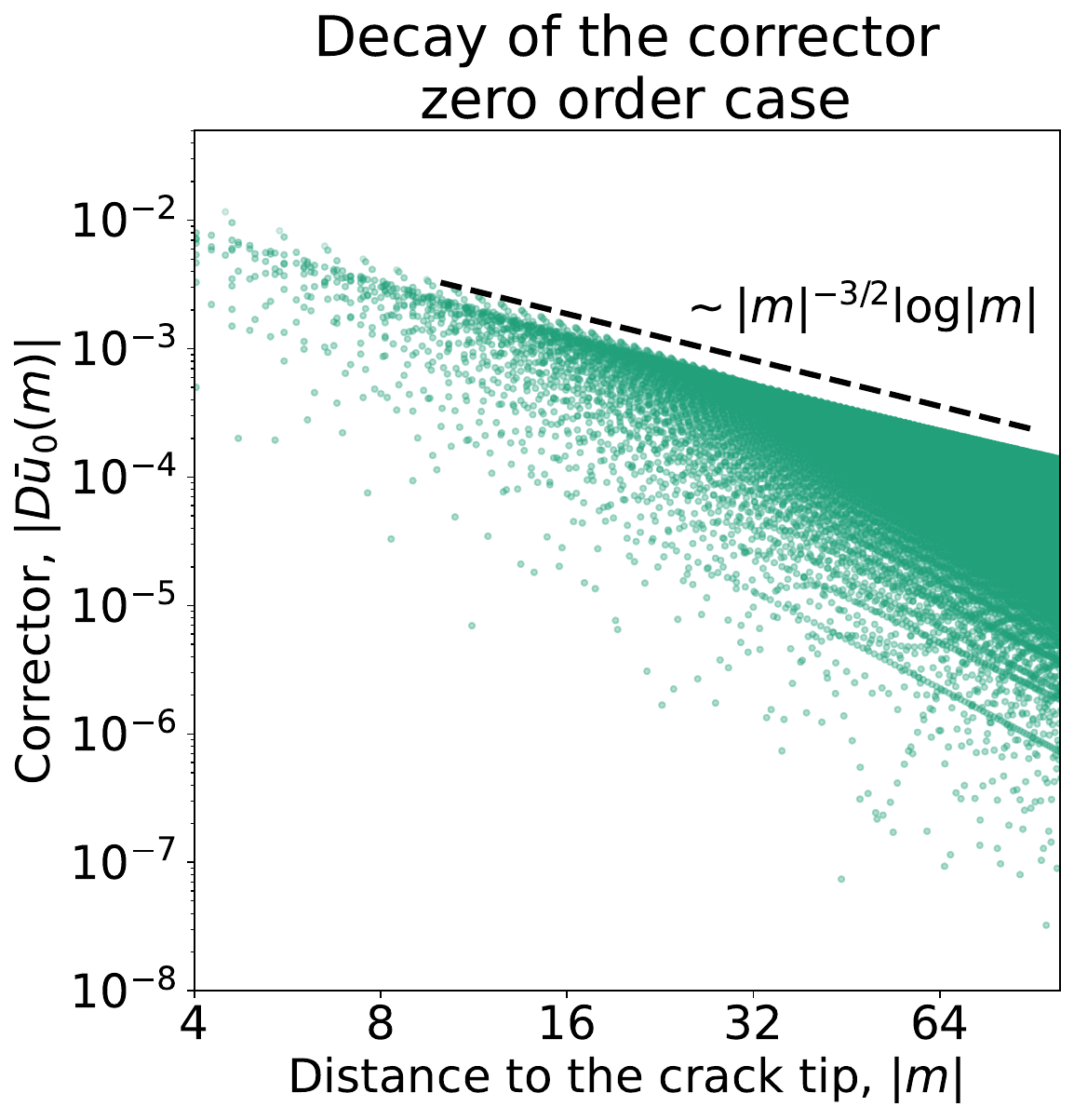}}}
        \end{subfigure}
        \hfill
        \begin{subfigure}[c]{0.31\textwidth}
            \centering
                    {{\includegraphics[trim = 0 0 0 0,clip,width=\textwidth]{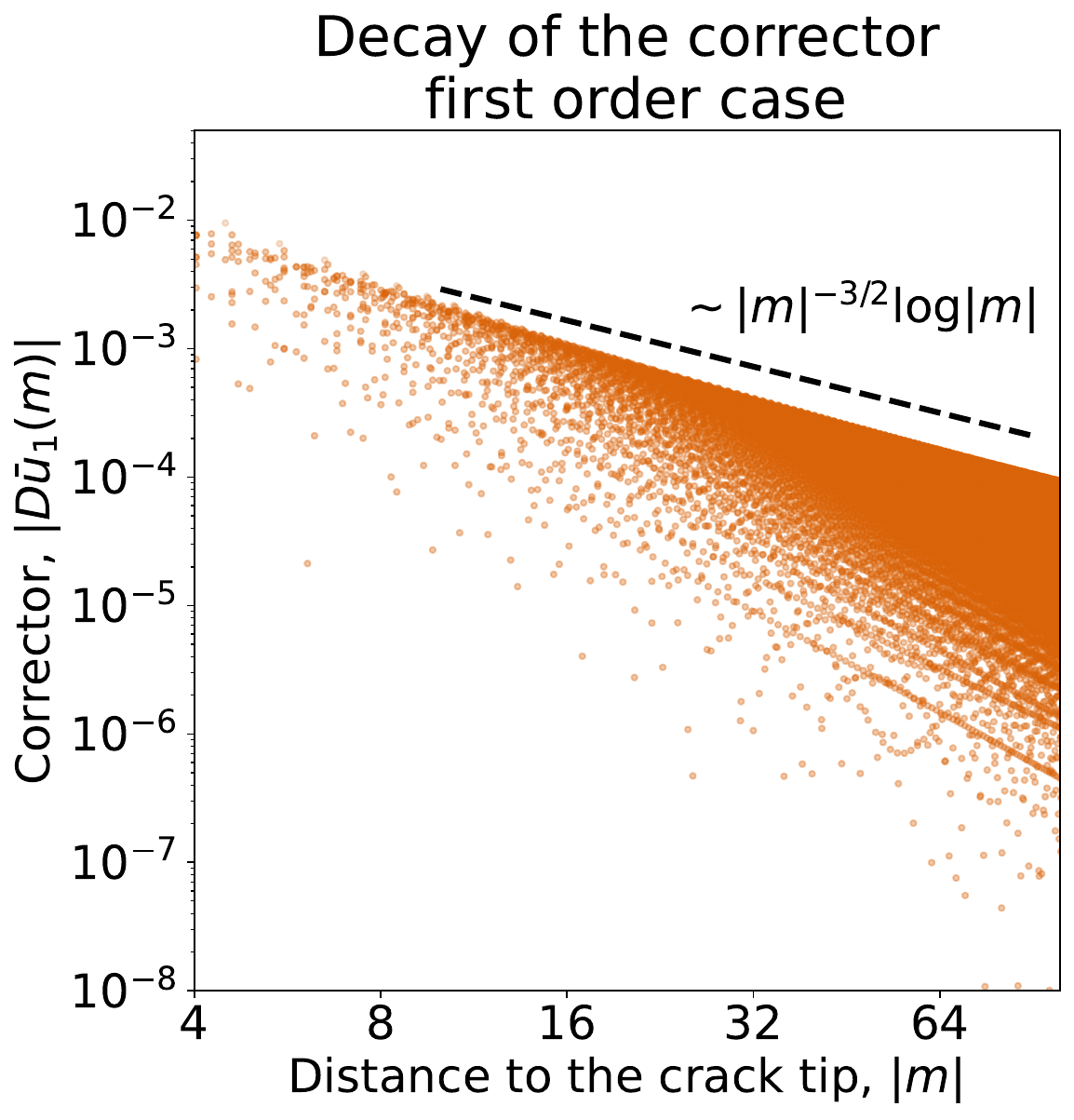}}}
        \end{subfigure}
        \hfill
        \begin{subfigure}[c]{0.31\textwidth}
            \centering
                    {{\includegraphics[trim = 0 0 0 0,clip,width=\textwidth]{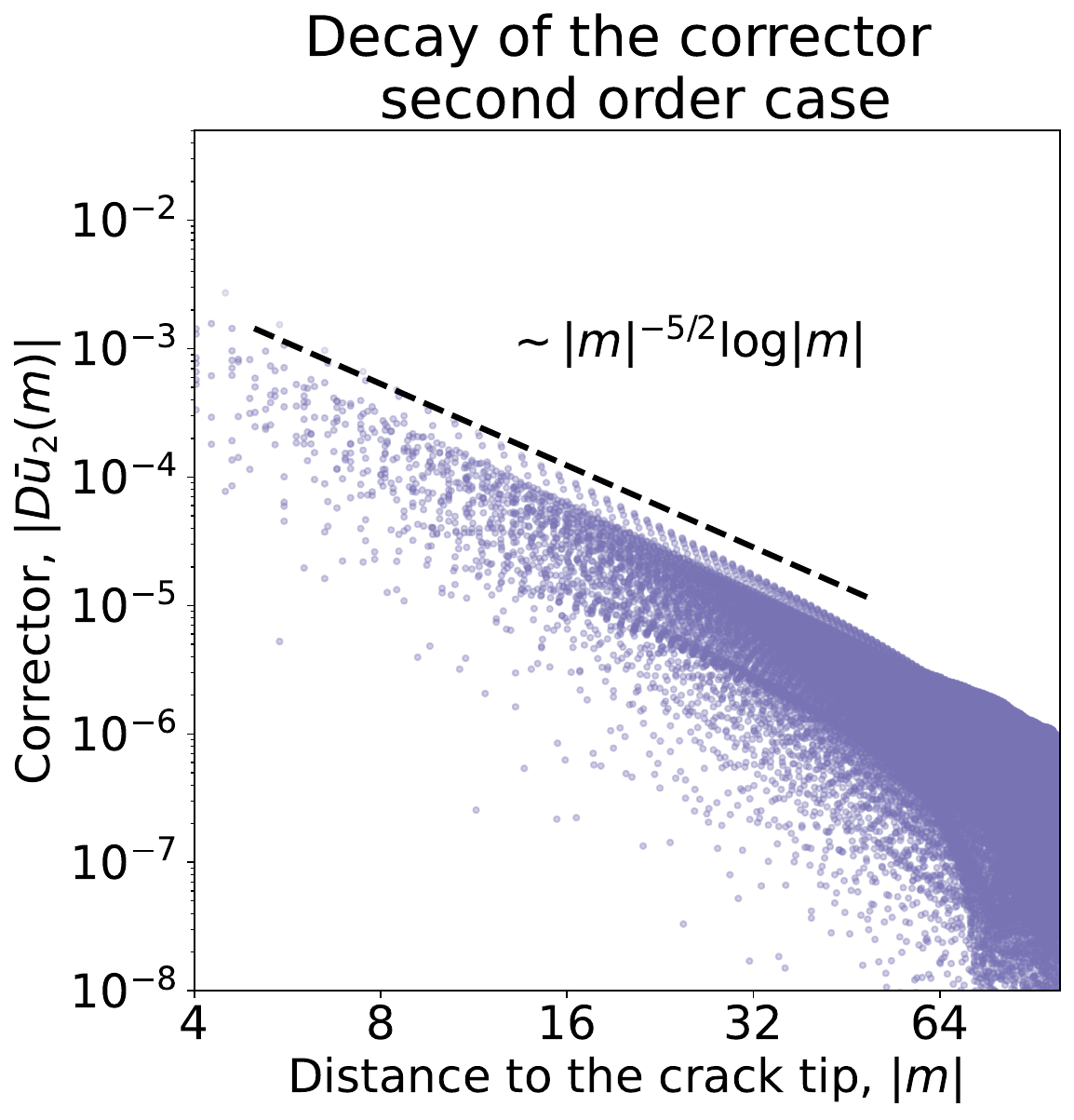}}}
        \end{subfigure}
        \hfill\\
        \centering
        \begin{subfigure}[c]{0.31\textwidth}
            \centering
                    {{\includegraphics[trim = 0 0 0 0,clip,width=\textwidth]{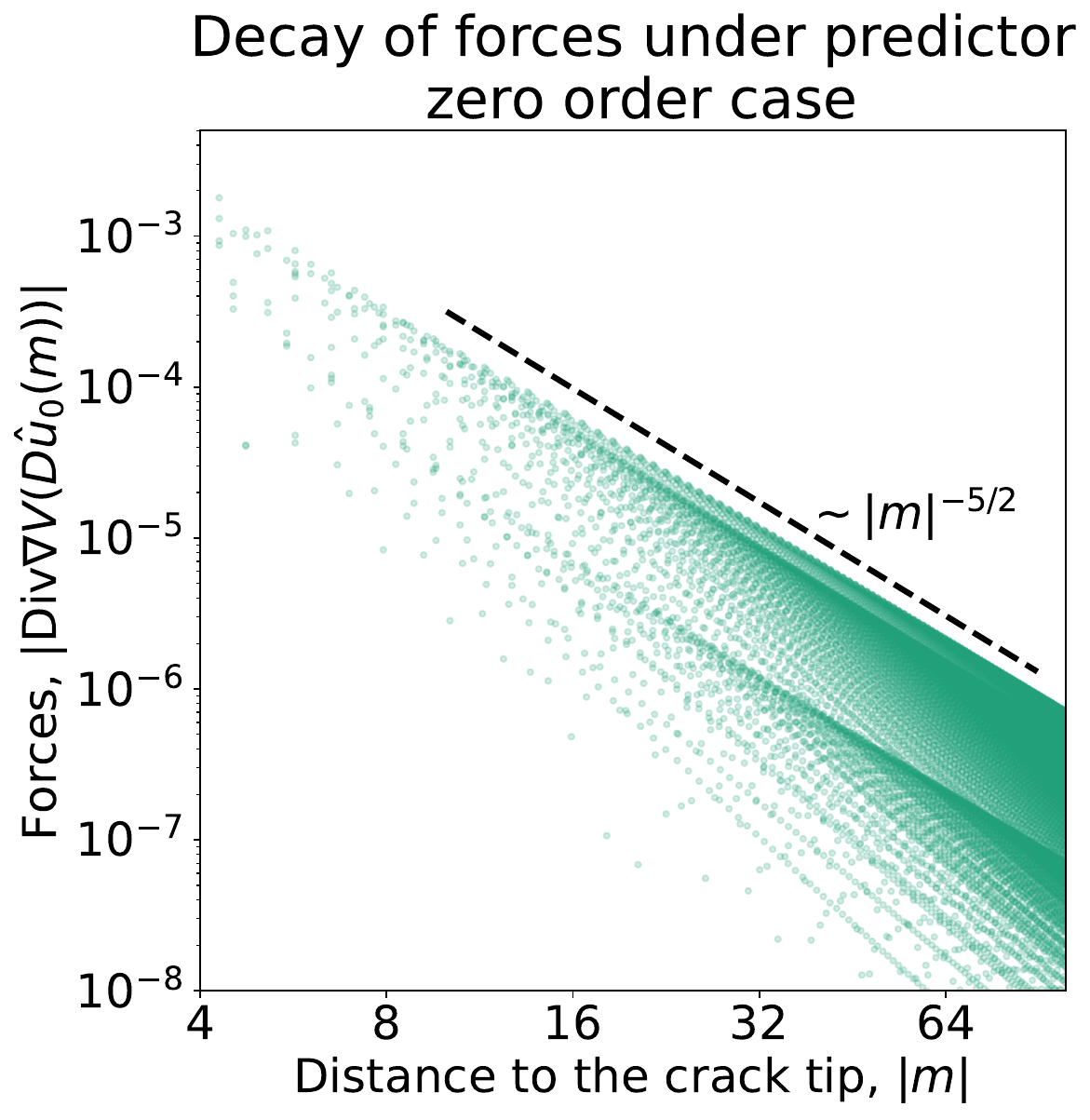}}}
        \end{subfigure}
        \hfill
        \begin{subfigure}[c]{0.31\textwidth}
            \centering
                    {{\includegraphics[trim = 0 0 0 0,clip,width=\textwidth]{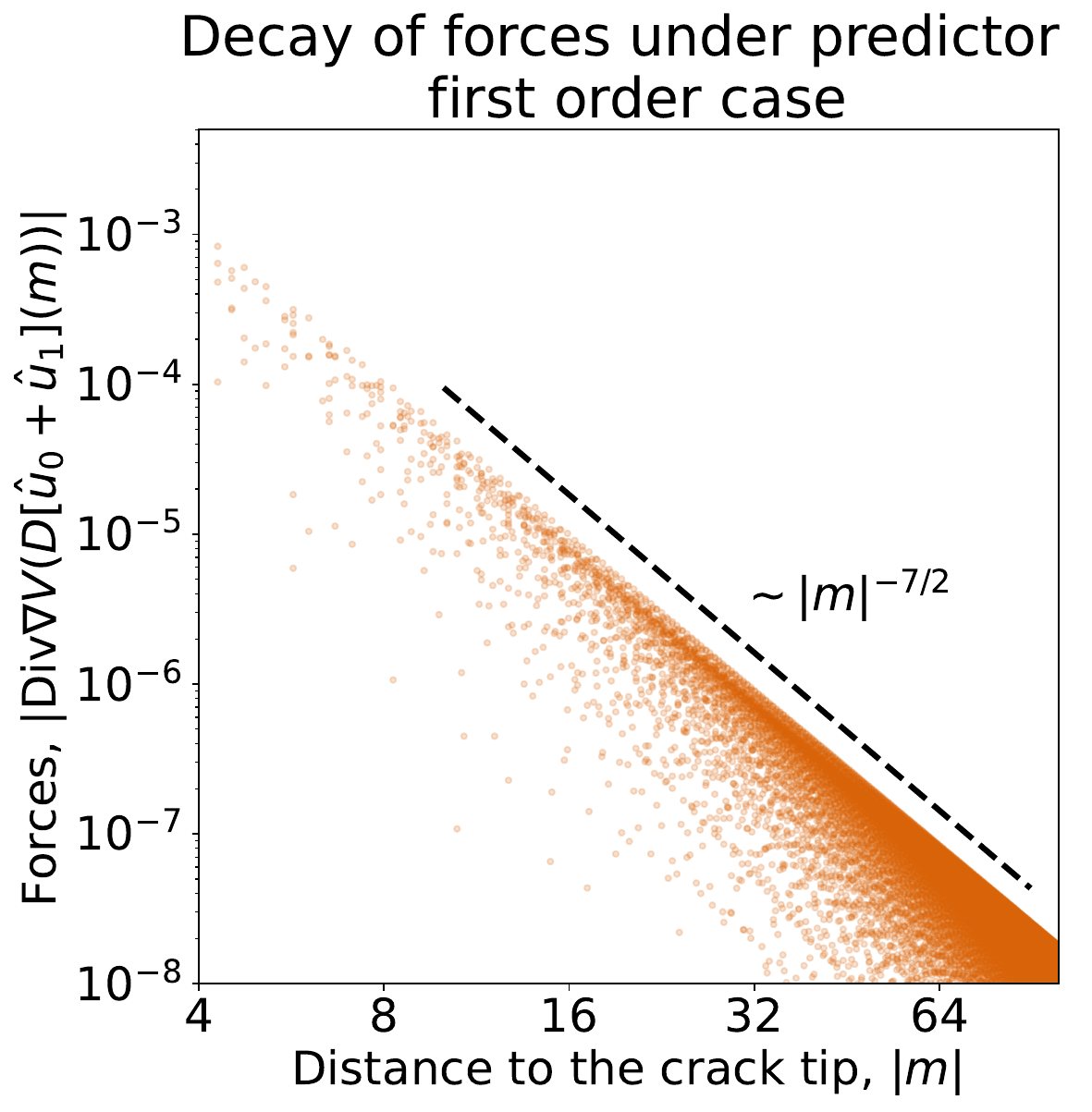}}}
        \end{subfigure}
        \hfill
        \begin{subfigure}[c]{0.31\textwidth}
            \centering
                    {{\includegraphics[trim = 0 0 0 0,clip,width=\textwidth]{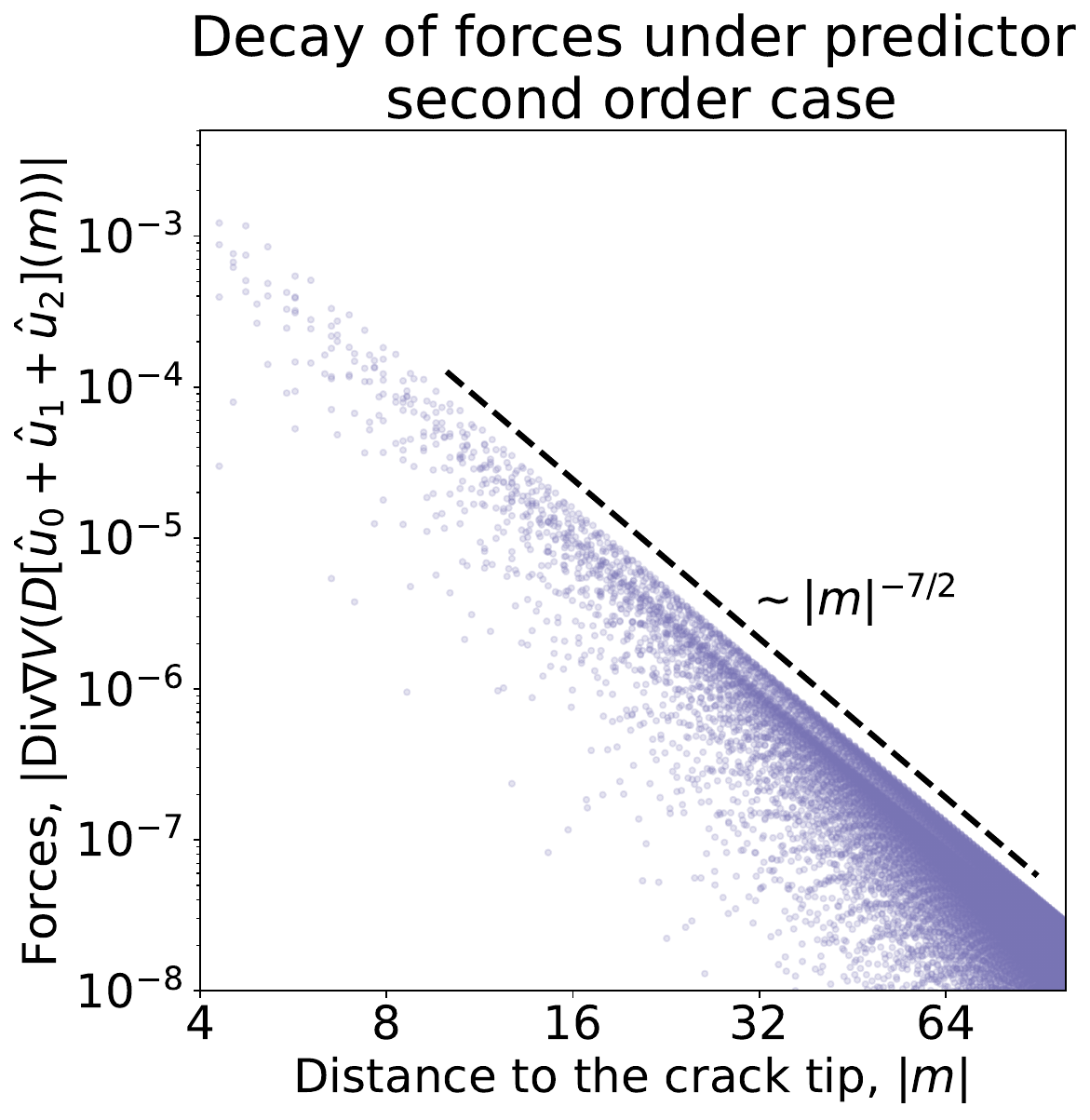}}}
        \end{subfigure}
        \hfill\\
        \centering
        \begin{subfigure}[c]{0.31\textwidth}
            \centering
                    {{\includegraphics[trim = 0 0 0 0,clip,width=\textwidth]{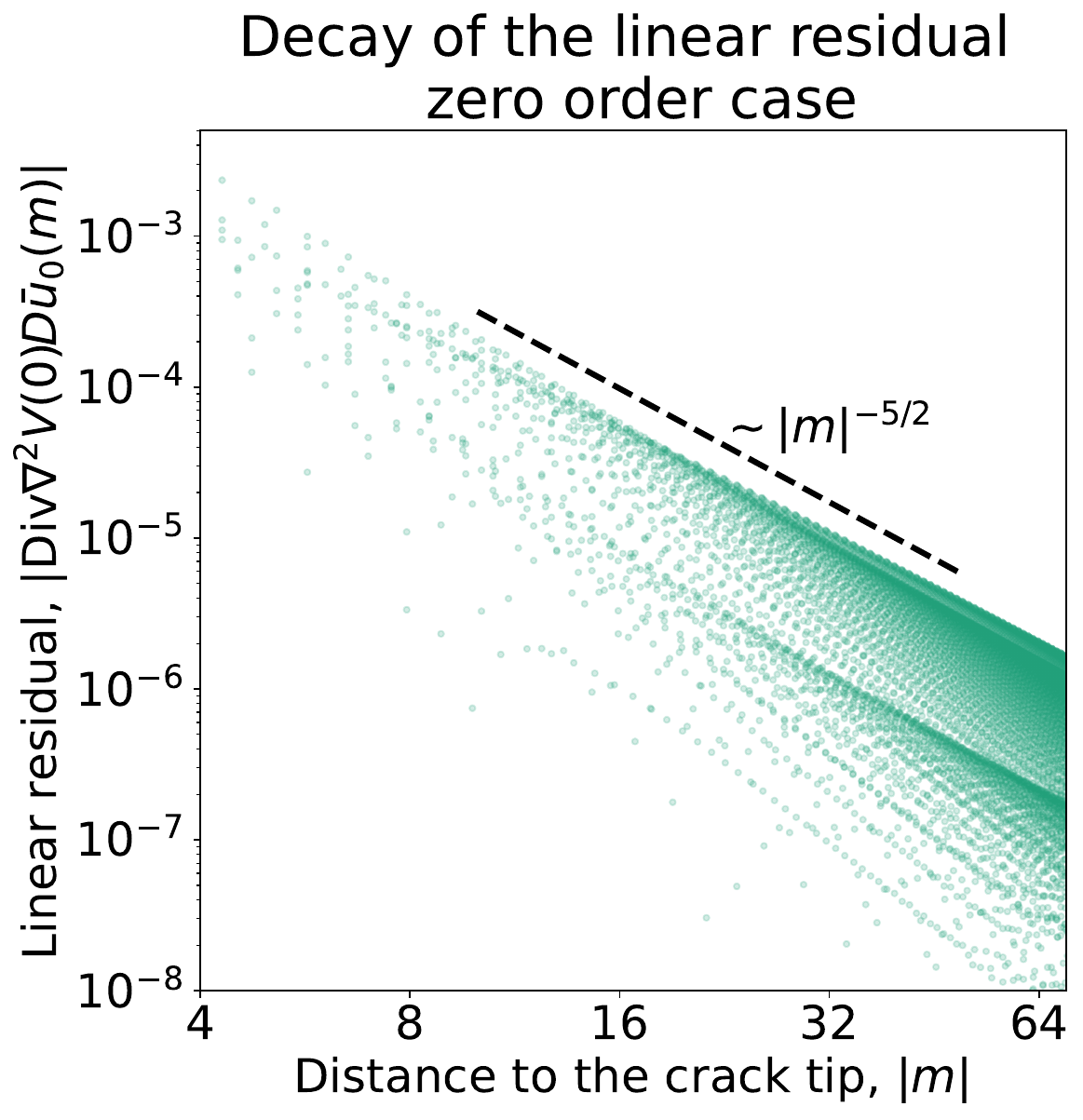}}}
        \end{subfigure}
        \hfill
        \begin{subfigure}[c]{0.31\textwidth}
            \centering
                    {{\includegraphics[trim = 0 0 0 0,clip,width=\textwidth]{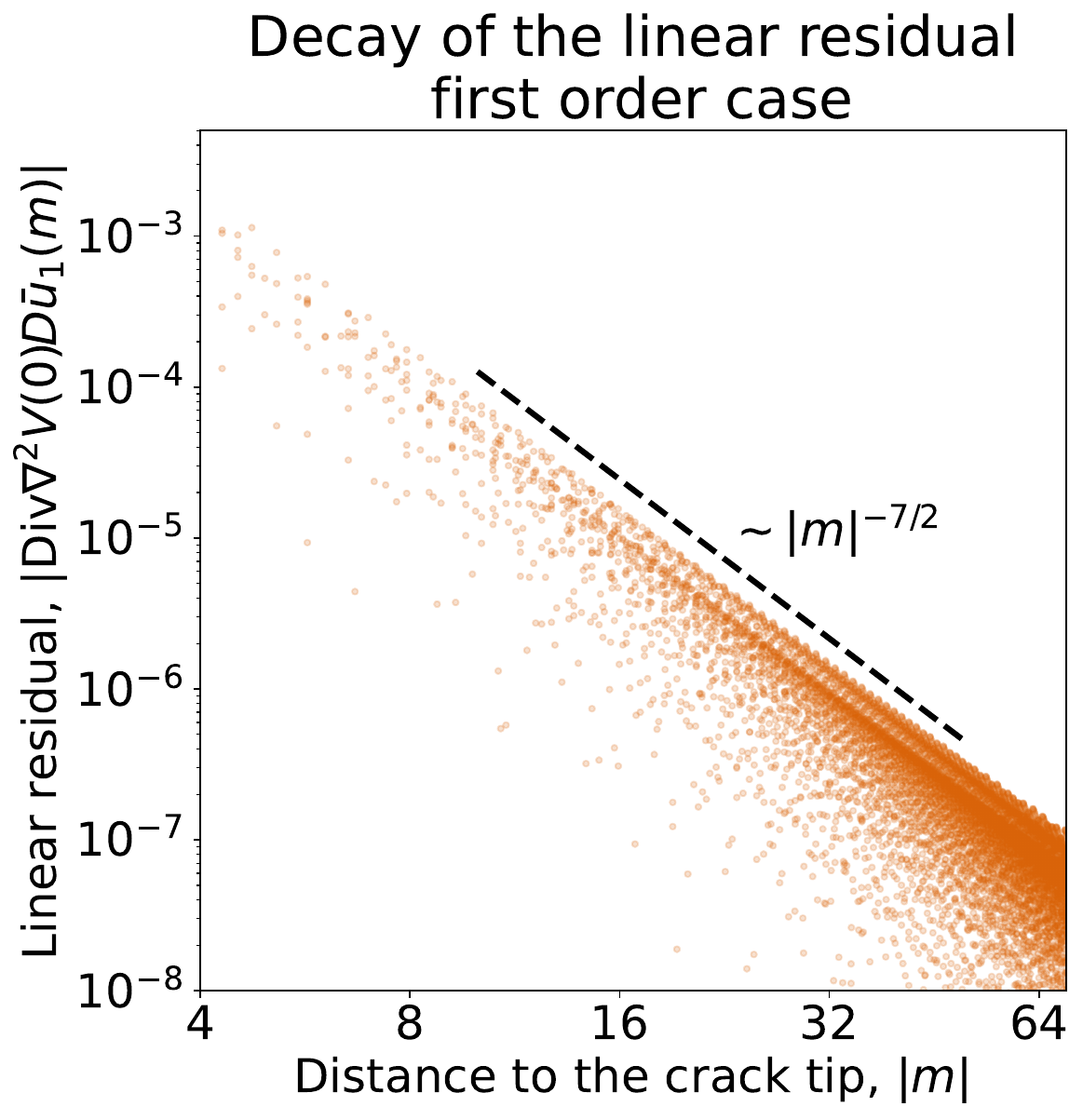}}}
        \end{subfigure}
        \hfill
        \begin{subfigure}[c]{0.31\textwidth}
        \centering
                    {{\includegraphics[trim = 0 0 0 0,clip,width=\textwidth]{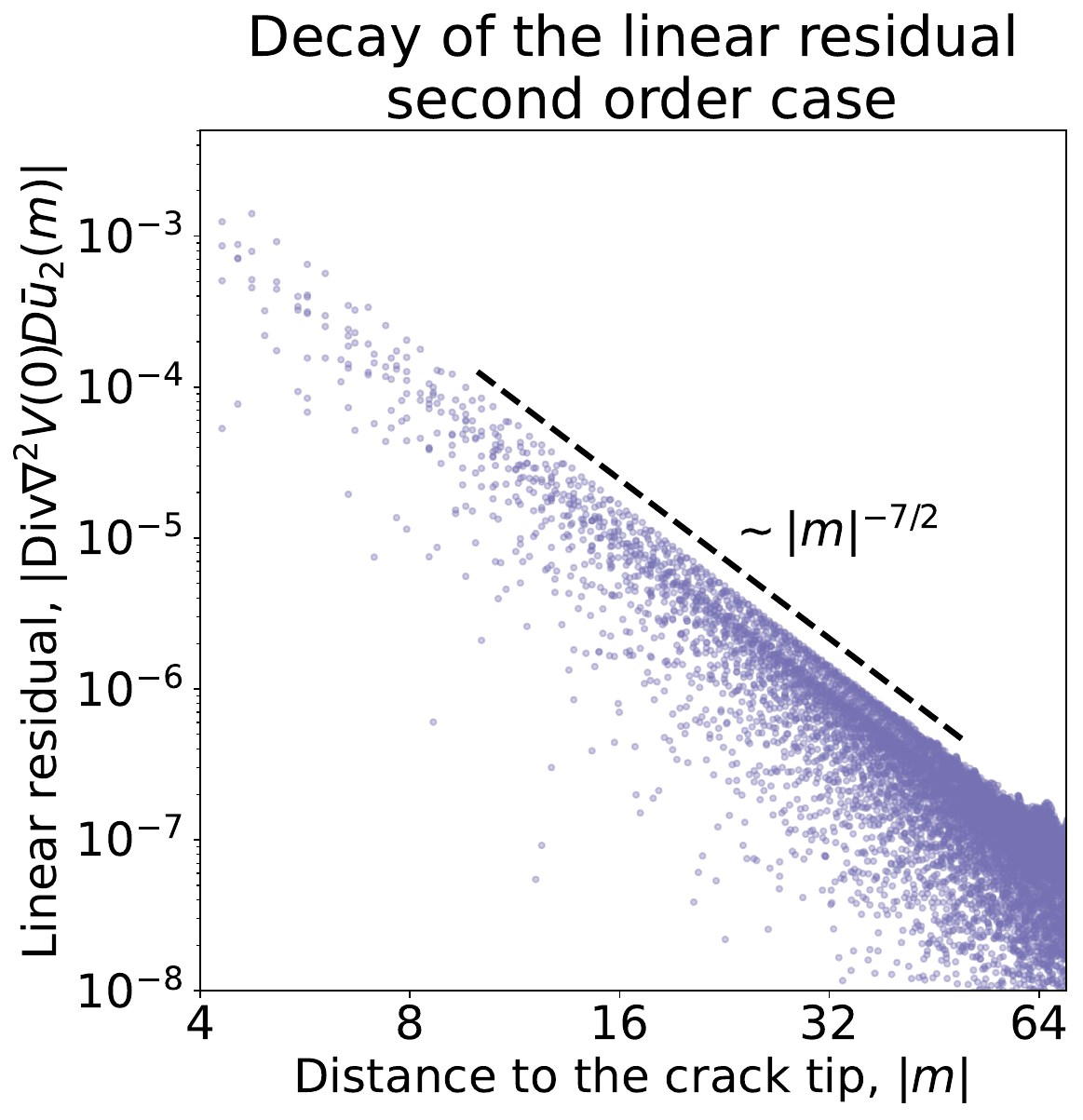}}}
        \end{subfigure}
        \hfill
        \caption{Top row: the decay of $|D\bar u_i(m)|$ for $i=0,1,2$ (left to right). Middle row: the decay of forces $|{\rm Div}\nabla V(Du_{\rm pred}^{(i)})|$ for $i=0,1,2$ (left to right). Bottom row: the decay of the linear residual $|{\rm Div} \nabla^2V(0)D \bar u_i(m)|$ for $i=0,1,2$ (left to right). We can clearly see that while prescribing $u_{\rm pred}^{(1)} = \bar u_0 + \bar u_1$ as a boundary condition improves the decay of forces (middle row) and the linear residual (bottom row), it fails to improve the decay of $|D \bar u_1(m)|$ and additionally $\hat u_2$ is needed.} 
           \label{fig1}
    \end{figure}
    
    We note that while Theorem~\ref{thm:ubar2_decay} suggests that, up to log terms, we have $|D\bar u_2(m)| \lesssim |m|^{-2}$, we in fact observe an improved rate
    \[
    |D\bar u_2(m)| \lesssim |m|^{-5/2}.
    \]
    We are at present unsure whether this is generic behaviour to be expected (akin to how considering mirror symmetry in \cite{2017-bcscrew} led to proving an improved decay, which was first numerically observed in \cite{HO2014}), or whether it is a numerical artefact of our finite-domain approximation. We hope to address this issue in future work. In any case, the simulations confirm that $\hat u_0 + \hat u_1 + \hat u_2$ exactly captures the far-field $\sim |m|^{-3/2}$ behaviour, which was our primary aim. Note however, that to prove such an improved decay result, new non-trivial ideas seem to be needed.

    %%%%%%%%%%%%%%%%%%%%%%%%%%%%%%%%%%%%%%%%%%%%%%%%%%%%%%%%%%%%
    %%%%%%%%%%%%%%%%%%%%  NEW SECTION   %%%%%%%%%%%%%%%%%%%%%%%%
    %%%%%%%%%%%%%%%%%%%%%%%%%%%%%%%%%%%%%%%%%%%%%%%%%%%%%%%%%%%%
    
    \section{Conclusions and outlook}\label{sec:conclusions}
    
    We have fully developed the first order correction term in the atomistic asymptotic expansion of the elastic field around a Mode III crack in anti-plane geometry. As a result the well-known flexible boundary condition ansatz due to Sinclair and co-authors has been proven to be incomplete, which can have far-reaching consequences for the atomistic simulations of fracture. To obtain our rigorous proofs, we have also succeeded in providing a higher order description of the associated lattice Green's function. In the process, we have thus shown a first explicit example of how to expand the theory developed in \cite{BHO22} to more complex geometries that can no longer rely on the homogeneous lattice Green's function but has to account for genuine spatial inhomogeneity.
    
    In the future, we aim to use the ideas developed here to heuristically derive a correct atomistic asymptotic expansion for a variety of fracture models, including for Mode I vectorial models and combine this ansatz with numerical continuation tools developed in \cite{BK2021}.
    %%%%%%%%%%%%%%%%%%%%%%%%%%%%%%%%%%%%%%%%%%%%%%%%%%%%%%%%%%%%
    %%%%%%%%%%%%%%%%%%%%  NEW SECTION   %%%%%%%%%%%%%%%%%%%%%%%%
    %%%%%%%%%%%%%%%%%%%%%%%%%%%%%%%%%%%%%%%%%%%%%%%%%%%%%%%%%%%%
    \section{Proofs: Setup, prerequisites, and tools}\label{sec:proofs_basics}
    \subsection{Cut-offs}
    \begin{definition}[Cut-off functions]\label{def-cutoff-fcts}
        Consider a generic scalar cut-off function $\hat \eta \,\colon\, [0,\infty] \to \R$ satisfying $\hat \eta(x) = 1$ on $\left[0,\tfrac12\right]$ and $\hat \eta(x) = 0$ for $x \in [1, \infty)$ and smooth and decreasing in-between. The class of lattice cut-off function we will employ are $\eta\,\colon\, \La \to \R$ defined as 
        \begin{equation}\label{eqn-lattice-cutoff-fct}
            \eta(m):= \hat\eta\left(\tfrac{|m-l|}{c|s|}\right),
        \end{equation}
        where the constant $c$ and two lattice points $\ell, s \in \La$ will always be specified. As a result, $D\eta$ is only non-zero in an annulus that scales like $|s|$ and in particular $|D^j\eta(x)| \lesssim |s|^{-j}$.
    \end{definition}
    \begin{lemma}\label{lem:pushing_eta}
        Suppose $u,v,\eta\,\colon \La \to \R$ and $\eta$ is compactly supported. Then
        \begin{align*}
            \sum_{m \in \La} Du(m) \cdot D[v \eta](m) &= \sum_{m \in \La} D[u\eta](m) \cdot Dv(m)\\
            &+ \sum_{m \in \La} v(m)\left(D\eta (m) \cdot D u(m)\right) - u(m)\left(D\eta \cdot D v(m)\right).
        \end{align*}
    \end{lemma}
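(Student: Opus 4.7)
\medskip

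\noindent\textbf{Proof plan.} The plan is to reduce the claim to a pointwise identity via the discrete Leibniz rule, so that the summation is trivial once compact support of $\eta$ guarantees the sums are finite. The two forms of the discrete product rule I will rely on are the elementary identities
\[
D_\rho[v\eta](m) = \eta(m)\, D_\rho v(m) + v(m+\rho)\, D_\rho \eta(m),
\]
\[
D_\rho[u\eta](m) = \eta(m)\, D_\rho u(m) + u(m+\rho)\, D_\rho \eta(m),
\]
obtained by adding and subtracting $\eta(m) v(m+\rho)$ (resp. $\eta(m)u(m+\rho)$) in the definition $D_\rho[v\eta](m) = v(m+\rho)\eta(m+\rho) - v(m)\eta(m)$.

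Plugging these into $D u(m)\cdot D[v\eta](m)$ and $D[u\eta](m)\cdot D v(m)$, the common term $\eta(m)\sum_{\rho}D_\rho u(m)\, D_\rho v(m)$ appears on both sides and cancels when I subtract, leaving pointwise
\[
Du(m)\cdot D[v\eta](m) - D[u\eta](m)\cdot Dv(m)
 = \sum_{\rho\in\Rc} D_\rho\eta(m)\Bigl[v(m+\rho)\,D_\rho u(m) - u(m+\rho)\,D_\rho v(m)\Bigr].
\]
Here I am implicitly using the convention in \eqref{eqn-def-Du} that all finite differences are evaluated with respect to the bond stencil $\Rc$; since all three functions are defined on all of $\La$ and only $D_\rho$ appears (not the crack-aware version), there is no book-keeping issue with $\Rc(m)$.

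The key observation is that the right-hand side collapses by a second application of the identities $v(m+\rho) = v(m) + D_\rho v(m)$ and $u(m+\rho) = u(m) + D_\rho u(m)$: the cross terms $D_\rho u(m)\, D_\rho v(m)\, D_\rho\eta(m)$ produced by expanding $v(m+\rho) D_\rho u(m)$ and $u(m+\rho) D_\rho v(m)$ are identical and cancel, leaving exactly
\[
\sum_{\rho\in\Rc} D_\rho\eta(m)\bigl[v(m)\,D_\rho u(m) - u(m)\,D_\rho v(m)\bigr] = v(m)\bigl(D\eta(m)\cdot Du(m)\bigr) - u(m)\bigl(D\eta(m)\cdot Dv(m)\bigr).
\]
This establishes the pointwise version of the claimed identity. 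Summing over $m\in\La$ and using that $\eta$ is compactly supported (so that $v\eta$ and $u\eta$ are finitely supported and every sum is a finite sum with no convergence issues) yields the statement.

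There is no real obstacle here; the only subtlety worth flagging is that one must apply the Leibniz rule in the specific asymmetric form above (with the shift $m+\rho$ sitting on the factor that is being moved through $\eta$) so that the cross cancellation in the last step actually goes through. Any alternative form of the product rule would lead to an identity with shifted arguments on $v$ and $u$, which would not match the statement of the lemma.
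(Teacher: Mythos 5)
Your argument is correct and proceeds along the same lines as the paper: a pointwise discrete product rule followed by summation, with compact support of $\eta$ making all sums finite. The computation itself is sound, including the treatment of the crack-aware stencil — since every dot product only ever involves $\rho \in \Rc(m)$, the restriction is applied consistently throughout.

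However, your closing remark is mistaken. The paper's own proof uses precisely the alternative form of the product rule that you claim would fail, namely $D_\rho[v\eta](m) = \eta(m+\rho)\,D_\rho v(m) + v(m)\,D_\rho\eta(m)$. With that form the unshifted $v(m)$ and $u(m)$ sit in front of $D_\rho\eta$ from the outset, so subtracting $D[u\eta]\cdot Dv$ from $Du\cdot D[v\eta]$ yields the asserted identity immediately, with no secondary expansion or cancellation of triple-difference cross terms needed. Your chosen form $D_\rho[v\eta](m) = \eta(m)\,D_\rho v(m) + v(m+\rho)\,D_\rho\eta(m)$ also works, but it is exactly what forces the extra step of expanding $v(m+\rho) = v(m) + D_\rho v(m)$ and $u(m+\rho) = u(m) + D_\rho u(m)$. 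Both asymmetric forms of the discrete Leibniz rule lead to the lemma; the one you dismiss is in fact the more direct route, and is the one the paper takes.
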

    \begin{proof}
       The result follows directly from realising that we can rewrite $D_{\rho}[v\eta](m)$ as
       \begin{align*}
        D_{\rho}[v\eta](m) &= u(m+\rho)\eta(m+\rho) - v(m)\eta(m)\\  &= v(m+\rho)\eta(m+\rho) \underbrace{- v(m)\eta(m+\rho) + v(m)\eta(m+\rho)}_{=0} - v(m)\eta(m)\\ &= D_{\rho}v(m)\eta(m+\rho) + v(m)D_{\rho}\eta(m)
       \end{align*}
    \end{proof}
    and applying this rewrite to $D[v\eta]$ on the left-hand side and to $D[u\eta]$ on the right-hand side.
    \subsection{Properties of the function spaces}
    Let us collect a few important properties of functions in $\Hcc$. Remember that we set $(Du(m))_\rho =0$ when the bond crosses the crack. The following statements closely follow the full homogeneous case though, despite this difference.
    \begin{lemma}[Poincaré on sliced annulus]\label{lem:discretePoincare}
    There are $C_P, R_P>0$ such that for all $R>0$ large enough and all $v$
        \[
        \|v - (v)_{\Sigma_R}\|_{\ell^2(\Sigma_R)} \lesssim R\|Dv\|_{\ell^2(\Sigma_R')},
        \]
        where $(v)_{\Sigma_R} = \frac{1}{\lvert \Sigma_R \rvert}\sum_{m \in \Sigma_R}v(m)$ and $\Sigma_R := (B_{c_2R}(0) \setminus B_{c_1R}(0)) \cap \Lambda$, $\Sigma_R' := (B_{c_2R+R_P}(0) \setminus B_{c_1R-R_P}(0)) \cap \Lambda$.
    \end{lemma}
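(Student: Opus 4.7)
The plan is to reduce this discrete Poincaré inequality to the continuum Poincaré inequality on the corresponding slit annulus via a canonical piecewise-affine extension. The key geometric observation is that although the annulus $B_{c_2R}(0)\setminus B_{c_1R}(0)$ is doubly connected, the radial slit along $\Gamma_0$ removes the non-trivial loop, so the continuum domain
\[
\tilde\Sigma_R := \bigl(B_{c_2R}(0) \setminus B_{c_1R}(0)\bigr) \setminus \Gamma_0
\]
is a bounded, simply connected, Lipschitz domain. Moreover it is a dilation by $R$ of the fixed domain $\tilde\Sigma_1$, so by scale invariance the continuum Poincaré inequality
\[
\|\tilde v - (\tilde v)_{\tilde\Sigma_R}\|_{L^2(\tilde\Sigma_R)} \leq C R \,\|\nabla \tilde v\|_{L^2(\tilde\Sigma_R)}
\]
holds for all $\tilde v \in H^1(\tilde\Sigma_R)$ with a universal constant $C$.

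Next I would construct a suitable extension $\tilde v$ of $v$. Take a Kuhn-type triangulation of $\R^2$ with vertex set $\La$, modified near $\Gamma_0$ so that no triangle straddles the crack: the triangles adjacent to $\Gamma_0$ from above and from below are treated as distinct, and $\tilde v$ is defined as the continuous nodal interpolant on this split triangulation. Because no bond crossing $\Gamma_0$ appears as a triangle edge, $\tilde v$ is permitted to jump across the crack, and its gradient on each triangle is a fixed linear combination of the finite differences $D_\rho v$ along the edges of that triangle. Standard element-by-element finite-element estimates then yield
\[
\|\tilde v - (\tilde v)_{\tilde\Sigma_R}\|_{L^2(\tilde\Sigma_R)} \,\gtrsim\, \|v - (v)_{\Sigma_R}\|_{\ell^2(\Sigma_R)} \qquad \text{and} \qquad \|\nabla \tilde v\|_{L^2(\tilde\Sigma_R)} \,\lesssim\, \|Dv\|_{\ell^2(\Sigma_R')},
\]
where $R_P$ is chosen large enough that $\Sigma_R'$ contains the vertices of every triangle intersecting $\tilde\Sigma_R$. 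Combining these norm equivalences with the continuum Poincaré inequality above then yields the claim.

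The main obstacle is the faithful treatment of the crack. Since by the convention in \eqref{eqn-def-Du} we have $D_\rho v(m) = 0$ across $\Gamma_0$, a naive continuum extension would create a spurious gradient contribution from opposite sides of the slit; the extension must instead admit a free jump across $\Gamma_0$, which is exactly what the doubled-triangulation construction provides. This is also the only place where the crack geometry plays a role: once the extension is in place, the estimate is reduced to a textbook continuum Poincaré inequality on a Lipschitz domain.

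As an alternative route, the same bound can be obtained by a more combinatorial argument: any pair $m, m' \in \Sigma_R$ can be joined by a lattice path in $\Sigma_R'$ of length at most $C R$ that avoids crossing $\Gamma_0$ (routing around the crack tip via the right half of the annulus when necessary), after which Cauchy--Schwarz along such paths and averaging over $m'$ gives the Poincaré estimate, provided one chooses the paths so that the number of paths through any fixed bond is controlled by $O(R^3)$.
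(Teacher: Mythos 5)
Your proposal is correct and takes essentially the same route as the paper, which simply cites the discrete Poincar\'e inequality of Ehrlacher--Ortner--Shapeev (interpolation to a continuum function, continuum Poincar\'e on the scaled domain, and norm equivalence) and notes that the continuum Poincar\'e inequality still holds on the sliced annulus; your crack-respecting split triangulation is precisely the modification the paper alludes to. One small slip worth fixing: the slit annulus $\tilde\Sigma_R$ is \emph{not} a Lipschitz domain (along the interior of the slit the boundary is a segment with the domain on both sides, so it cannot be written as a Lipschitz graph with the domain on one side); instead invoke the cone condition, exactly as the paper does in the proof of Lemma~\ref{lem:sobolev_emb}, which together with boundedness and connectedness yields the continuum Poincar\'e inequality on $\tilde\Sigma_1$ and then scales by dilation to $\tilde\Sigma_R$ with constant proportional to $R$.
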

    \begin{proof}
        This is basically \cite[Lemma 7.1]{EOS2016pp}. Note however that we use the sliced annulus which in our notation means $Du$ is set to $0$ for bonds that cross the crack. This does not impact the proof in any meaningful way though, as the continuum result on the sliced annulus holds true.
    \end{proof}
    \begin{lemma}[Density of compactly supported functions.] \label{lem:density}
        $\Hc$ is dense in $\Hcc$ with respect to the $\lVert Dv \lVert_{\ell^2}$ semi-norm.
    \end{lemma}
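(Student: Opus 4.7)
The plan is to prove density by a standard truncation argument with lattice cut-offs, following the homogeneous template of \cite{EOS2016pp} but respecting the crack-slit geometry. Take any $u \in \Hcc$; I will construct a sequence $u_R \in \Hc$ with $\|D u - D u_R\|_{\ell^2(\La)} \to 0$ as $R \to \infty$.

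First, I would choose a lattice cut-off $\eta_R$ as in Definition~\ref{def-cutoff-fcts}, centred at the origin, with $\eta_R \equiv 1$ on $B_{R/2}\cap\La$ and $\eta_R \equiv 0$ off $B_R\cap\La$, so that $|D\eta_R| \lesssim R^{-1}$ and $D\eta_R$ is supported in the sliced annulus $\Sigma_R := (B_R\setminus B_{R/2})\cap\La$ of Lemma~\ref{lem:discretePoincare}. Let $c_R := (u)_{\Sigma_R}$ and define
\[
u_R(m) := \eta_R(m)\bigl(u(m) - c_R\bigr).
\]
Then $u_R$ has compact support in $B_R\cap\La$, so $u_R \in \Hc$.

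Next, since constants vanish under $D$, I write $u - u_R = (1-\eta_R)(u - c_R)$ and apply the discrete product rule already used in Lemma~\ref{lem:pushing_eta}:
\[
D_\rho(u - u_R)(m) = \bigl(1-\eta_R(m+\rho)\bigr)\,D_\rho u(m) \;-\; (u(m) - c_R)\,D_\rho\eta_R(m).
\]
The first term is pointwise bounded by $|D_\rho u(m)|$ and vanishes whenever $m$ and $m+\rho$ both lie in $B_{R/2}$, so its $\ell^2$-norm is controlled by $\|Du\|_{\ell^2(\La\setminus B_{R/2-1})}$, which tends to $0$ as $R\to\infty$ because $Du\in\ell^2(\La)$. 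The second term is supported in a slightly enlarged annulus $\Sigma_R'$, and the sliced Poincaré inequality of Lemma~\ref{lem:discretePoincare} gives
\[
\bigl\|(u-c_R)\,D\eta_R\bigr\|_{\ell^2(\Sigma_R')} \lesssim R^{-1}\|u-c_R\|_{\ell^2(\Sigma_R')} \lesssim \|Du\|_{\ell^2(\Sigma_R'')},
\]
which again tends to $0$ as $R\to\infty$. Adding the two contributions yields $\|D(u-u_R)\|_{\ell^2(\La)}\to 0$.

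The only subtlety specific to this geometry is the crack-slit convention $(Du)_\rho = 0$ for bonds crossing $\Gamma_0$. This is handled automatically: multiplying $u - c_R$ by $\eta_R$ cannot create any new across-crack interactions, so $u_R$ inherits the slit structure from $u$, and the right-hand side of the Poincaré bound is already stated on the sliced annulus $\Sigma_R''$. I expect the main (mild) obstacle to be bookkeeping the exact width of the annular collar so that both the pointwise-$Du$ piece and the $(u-c_R)D\eta_R$ piece live on shrinking tails of $\|Du\|_{\ell^2}$; this is a matter of choosing the constants $c_1,c_2,R_P$ in Definition~\ref{def-cutoff-fcts} and Lemma~\ref{lem:discretePoincare} compatibly, and presents no genuine analytical difficulty once the sliced Poincaré estimate is in hand.
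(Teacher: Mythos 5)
Your proof is correct and follows the paper's own argument essentially verbatim: subtract the sliced-annulus mean $(u)_{\Sigma_R}$, multiply by a radial lattice cutoff, split $D(u-u_R)$ via the discrete product rule into a tail term involving $Du$ and an annular term involving $(u-c_R)D\eta_R$, and then control the latter with the sliced Poincar\'e inequality of Lemma~\ref{lem:discretePoincare}. No further comment is needed.
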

    \begin{proof}
        This is a standard consequence of Lemma \ref{lem:discretePoincare} when combined with a cutoff as in Definition \ref{def-cutoff-fcts}. Indeed on just sets $v_n(m) = (v(m)-(v)_{\Sigma_n}) \eta(m)$ with $\eta(m) = \hat\eta(\frac{\lvert m \rvert}{n})$ and the annulus based on $c_1=1/4$, $c_2=5/4$. Then one can calculate
        \begin{align*}
            \lVert Dv - Dv_n \rVert_{\ell^2} &\lesssim \lVert ((Dv)_\rho (1-\eta(m+\rho)))_{\rho \in \Rc} \rVert_{\ell^2} + \lVert (v(m)-(v)_{\Sigma_n}) D\eta(m) \rVert_{\ell^2}\\
            &\lesssim \lVert Dv \rVert_{\ell^2(\Lambda \setminus B_{5n/4}(0))} + \frac{1}{n} \lVert v - (v)_{\Sigma_n}\rVert_{\ell^2(\Sigma_n)}\\
            &\lesssim \lVert Dv \rVert_{\ell^2(\Lambda \setminus B_{5n/4}(0))} +  \lVert Dv \rVert_{\ell^2(\Sigma_n')}\\
            &\lesssim \lVert Dv \rVert_{\ell^2(\Lambda \setminus B_{n/8}(0))}.
        \end{align*}
        And this last term goes to zero as $n \to \infty$.
    \end{proof}
    \begin{lemma}[Discrete Sobolev Embedding]\label{lem:sobolev_emb}
    If $Dv \in \ell^p$ for some $1 \leq p<d=2$ then there exists a constant $v_{\infty}$ such that $v-v_{\infty} \in \ell^{p^\ast}$ where $p^\ast = \frac{2p}{2-p}$ is the Sobolev conjugate.
    \end{lemma}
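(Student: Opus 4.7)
The plan is to reduce to the continuum Gagliardo--Nirenberg--Sobolev embedding on $\R^2$ via a crack-respecting piecewise affine interpolation, handled on each half-plane by even reflection.

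\emph{Step 1 (Reduction to compactly supported functions).} First I would upgrade Lemma~\ref{lem:density} from the $\ell^2$ to the $\ell^p$ semi-norm. The sliced-annulus Poincar\'e inequality (Lemma~\ref{lem:discretePoincare}) holds in $\ell^p$ by the same proof, since the underlying continuum Poincar\'e on the sliced annulus holds for all $p \in [1,\infty)$. The same cutoff construction $v_n(m) := (v(m) - (v)_{\Sigma_n})\eta_n(m)$ then yields compactly supported $v_n$ with $Dv_n \to Dv$ in $\ell^p$. Granted the compactly supported estimate
\[
\|w\|_{\ell^{p^\ast}(\La)} \lesssim \|Dw\|_{\ell^p(\La)} \qquad (w \in \Hc),
\]
applying it to differences $v_n - v_m$ shows both that $\{(v)_{\Sigma_n}\}_n \subset \R$ is Cauchy and that $v - (v)_{\Sigma_n}$ converges in $\ell^{p^\ast}$. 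Setting $v_\infty := \lim_n (v)_{\Sigma_n}$ then yields the claim.

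\emph{Step 2 (Piecewise affine interpolation).} For $w \in \Hc$, I would triangulate $\R^2 \setminus \Gamma_0$ by the standard nearest-neighbour mesh on $\La$ (each unit cell split along a diagonal), with cells adjacent to $\Gamma_0$ split so that vertices in $\Gamma_+$ and $\Gamma_-$ are never joined by a triangle edge (the mesh is conforming with the slit). On each triangle the gradient of the piecewise affine interpolant $\tilde w$ is constant and bounded by the relevant values of $D_\rho w$, giving
\[
\|\nabla \tilde w\|_{L^p(\R^2 \setminus \Gamma_0)} \lesssim \|Dw\|_{\ell^p(\La)}, \qquad \|w\|_{\ell^{p^\ast}(\La)}^{p^\ast} \lesssim \|\tilde w\|_{L^{p^\ast}(\R^2 \setminus \Gamma_0)}^{p^\ast} + \|Dw\|_{\ell^p(\La)}^{p^\ast},
\]
the second by cell-wise comparison using $|\tilde w - w(m)| \lesssim |Dw(m)|$ on the cell around $m$, together with the nesting $\|\cdot\|_{\ell^{p^\ast}(\La)} \le \|\cdot\|_{\ell^p(\La)}$.

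\emph{Step 3 (Continuum Sobolev via half-plane reflection).} It remains to show $\|\tilde w\|_{L^{p^\ast}(\R^2 \setminus \Gamma_0)} \lesssim \|\nabla \tilde w\|_{L^p(\R^2 \setminus \Gamma_0)}$. Split $\tilde w$ into its restrictions $\tilde w^\pm := \tilde w|_{\{\pm x_2 > 0\}}$ to the two open half-planes. Each $\tilde w^\pm$ is compactly supported in the closed half-plane $\{\pm x_2 \ge 0\}$; its even reflection across $\{x_2 = 0\}$ yields a compactly supported $\bar w^\pm \in W^{1,p}(\R^2)$ satisfying $\|\nabla \bar w^\pm\|_{L^p(\R^2)}^p = 2\|\nabla \tilde w^\pm\|_{L^p(\{\pm x_2 > 0\})}^p$. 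The classical Gagliardo--Nirenberg--Sobolev inequality on $\R^2$ for compactly supported functions then gives $\|\bar w^\pm\|_{L^{p^\ast}(\R^2)} \lesssim \|\nabla \bar w^\pm\|_{L^p(\R^2)}$, and summing over the two half-planes closes the estimate.

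The \emph{main obstacle} is the continuum Sobolev step on the slit domain. A naive zero-extension of $\tilde w$ across $\Gamma_0$ to all of $\R^2$ introduces jumps along the entire $x_1$-axis (not just on $\Gamma_0$), since the interpolant generically has nonzero traces on both sides of the positive $x_1$-axis; and the conformal unfolding $z \mapsto \sqrt z$ preserves only the $p=2$ Dirichlet energy. The half-plane reflection argument sidesteps both obstructions by exploiting only the Lipschitz regularity of each half-plane and the boundedness of the even-reflection extension operator $W^{1,p}(\R^2_+) \to W^{1,p}(\R^2)$.
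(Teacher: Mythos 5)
Your proof is correct, and Step 3 takes a genuinely different route from the paper. The paper invokes the Adams--Fournier Sobolev embedding $\|f\|_{L^{p^\ast}(\R^2\setminus\Gamma_0)} \lesssim \|f\|_{W^{1,p}(\R^2\setminus\Gamma_0)}$ for domains satisfying the cone condition, then removes the inhomogeneous $\|f\|_{L^p}$ term by exploiting the scale-invariance of the slit domain under $x\mapsto tx$ and sending $t\to\infty$. You instead split $\tilde w$ across the $x_1$-axis into $\tilde w^{\pm}$, extend each by even reflection to $W^{1,p}(\R^2)$ with comparable gradient norm, and apply the homogeneous Gagliardo--Nirenberg--Sobolev inequality on $\R^2$ directly. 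Both are sound; your version avoids the cone-condition machinery and the rescaling trick at the cost of the (entirely elementary) half-plane decomposition, and is arguably more self-contained, whereas the paper's route would generalise more readily to slit geometries that are not unions of two half-planes. You also make explicit two points the paper leaves implicit in its final appeal to ``the density of $\Hc$'': that Lemma~\ref{lem:density} must first be upgraded from the $\ell^2$ to the $\ell^p$ semi-norm (which, as you note, follows by re-running the same cutoff argument with the $\ell^p$ sliced-annulus Poincar\'e), and that the constant $v_\infty$ is obtained as the limit of $(v)_{\Sigma_n}$ after using the compactly supported estimate on differences $v_n - v_k$ to get Cauchyness in $\ell^{p^\ast}$. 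One small inaccuracy in your closing remark: the interpolant $\tilde w$ is in fact continuous across the positive $x_1$-axis (which lies inside $\R^2\setminus\Gamma_0$); the jump discontinuity occurs only across $\Gamma_0$ itself. This does not affect the argument since you never use that remark, but the correct obstruction to the naive approach is the jump across $\Gamma_0$ alone, which already prevents $\tilde w$ from lying in $W^{1,p}(\R^2)$.
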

    \begin{proof}
    Without the crack in the geometry, the statement can be found in \cite[Prop. 12(iii)]{OrtnerShapeev12}.
    Note though that now $Dv$ is set to $0$ when crossing the crack $\Gamma_0$.
    
    It holds that 
    \[\lVert f \rVert_{L^{p^\ast}(\R^2 \backslash \Gamma_0)} \lesssim \lVert f \rVert_{W^{1,p}(\R^2 \backslash \Gamma_0)}\]
    for all $f \in W^{1,p}(\R^2 \backslash \Gamma_0)$ according to \cite{AF77}, as $\R^2 \backslash \Gamma_0$ satisfies the cone condition.
    A simple rescaling $g_t(x)=f(tx)$ for $t>0$ then shows
    \[t^{1-\tfrac{2}{p}} \lVert f \rVert_{L^{p^\ast}(\R^2 \backslash \Gamma_0)} \lesssim   t^{-\tfrac{2}{p}} \lVert f \rVert_{L^p(\R^2 \backslash \Gamma_0)} + t^{1-\tfrac{2}{p}} \lVert \nabla f \rVert_{L^p(\R^2 \backslash \Gamma_0)}.\]
    
    Dividing by $t^{1-\tfrac{2}{p}}$ and sending $t \to \infty$ reduces this to
    \[\lVert f \rVert_{L^{p^\ast}(\R^2 \backslash \Gamma_0)} \lesssim \lVert \nabla f \rVert_{L^{p}(\R^2 \backslash \Gamma_0)},\]
    where we used that $\R^2 \backslash \Gamma_0$ is preserved under the rescaling.
    
    Now, for any discrete $v \in \Hc$ we can use an interpolation (e.g., the one discussed in Section \ref{sec:interpol}) to conclude that
    \[ \lVert v \rVert_{\ell^{p^\ast}(\Lambda)} \lesssim \lVert D v \rVert_{\ell^{p}(\Lambda)}.\]
    The statement then follows from the the density of $\Hc$, Lemma \ref{lem:density}.
    \end{proof}
    \begin{lemma}\label{lem:decay_with_sobolev}
        Suppose $v\,\colon\,\La \to \R$ satisfies
        \[\lvert Dv(m) \rvert \lesssim \lvert m \rvert^{-\alpha}\]
        for some $\alpha>1$. Then there exists a constant $v_{\infty}$, such that
        \[\lvert v(m)-v_{\infty} \rvert \lesssim \lvert m \rvert^{1-\alpha}.\]
    \end{lemma}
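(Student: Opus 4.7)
The plan is to build $v_\infty$ as the limit of $v$ along suitable sequences tending to infinity, and to control $|v(m)-v_\infty|$ by summing $|Dv|$ along a carefully chosen lattice path from $m$ out to infinity. The only substantive issue is that the path must not cross the crack $\Gamma_0$, so all estimates are done in the slit plane and we stay at radii comparable to, or larger than, $|m|$.

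First, I would fix a reference direction (say $e_2$) and consider the dyadic sequence $m_k = 2^k R e_2$ for an integer $R$. Connecting $m_k$ and $m_{k+1}$ by a lattice path of length $\lesssim 2^k R$ that stays in the annulus $\{|x| \sim 2^k R\}$, the bound $|Dv(\ell)|\lesssim |\ell|^{-\alpha}$ gives
\begin{equation*}
|v(m_{k+1})-v(m_k)| \lesssim 2^kR\cdot (2^kR)^{-\alpha} = (2^kR)^{1-\alpha}.
\end{equation*}
Since $\alpha>1$, the geometric series $\sum_{k\ge 0} (2^kR)^{1-\alpha}$ converges and is bounded by $R^{1-\alpha}$, so $\{v(m_k)\}$ is Cauchy. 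Define $v_\infty := \lim_{k\to\infty} v(m_k)$, and note the telescoping bound $|v(m_0)-v_\infty|\lesssim R^{1-\alpha}$.

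Next I would show the limit is independent of the choice of sequence, and in fact deduce the pointwise bound. Given an arbitrary $m\in\La$ with $|m|$ large, I would build a lattice path from $m$ to $v_\infty$ in two pieces: (i) a circular-type path at roughly constant radius $|m|$ that connects $m$ to the nearest point of the form $2^k R e_2$ on that scale, and (ii) the dyadic radial path from step one. Both segments avoid $\Gamma_0$ since they stay in $\{|x|\gtrsim |m|\}$ in the slit plane. The circular segment has length $\lesssim |m|$ and each bond contributes $\lesssim |m|^{-\alpha}$, hence its total contribution is $\lesssim |m|^{1-\alpha}$; the radial segment contributes $\lesssim |m|^{1-\alpha}$ as above. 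Adding the two gives $|v(m)-v_\infty|\lesssim |m|^{1-\alpha}$, and the same argument applied to any other sequence $m_k'\to\infty$ shows that $v(m_k')\to v_\infty$, so $v_\infty$ is well-defined.

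The only obstacle worth flagging is the crack: one has to verify that the circular segment at radius $\sim|m|$ and the dyadic radial excursion can be realised as lattice paths whose bonds all lie in $\Rc(\cdot)$, i.e.\ none of them crosses $\Gamma_0$. This is handled by routing the circular path through the upper half-plane when $m_2\ge 0$ and the lower half-plane otherwise, and by choosing the reference ray $\{2^k R e_2\}$ away from the crack axis; the length bounds used above are unaffected by this choice. With the path constructed this way, $(Dv)_\rho$ is the actual finite difference along each bond, and the telescoping estimate goes through verbatim.
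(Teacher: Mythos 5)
Your proof is correct but takes a genuinely different route from the paper. The paper first invokes the discrete Sobolev embedding (Lemma~\ref{lem:sobolev_emb}) with some $p\in(2/\alpha,2)$: since $|Dv(m)|\lesssim|m|^{-\alpha}$ gives $Dv\in\ell^p$, there is a shift $v_\infty$ with $v-v_\infty\in\ell^{p^\ast}$, hence $v(m)-v_\infty\to0$ at infinity; a single telescoping sum along a path to infinity avoiding the crack then delivers the rate $|m|^{1-\alpha}$. You bypass the Sobolev lemma entirely and construct $v_\infty$ directly as the limit of a Cauchy sequence built from a dyadic telescope along a reference ray, then control an arbitrary $m$ by splicing in a ``circular'' connector at radius $\sim|m|$. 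Both proofs pivot on the same quantitative telescope estimate, and both must route paths in the slit plane so as not to cross $\Gamma_0$; the trade-off is that the paper's argument is shorter because the existence of a good shift $v_\infty$ is outsourced to an already-established lemma, whereas yours is more elementary and self-contained, needing only the geometric-series convergence from $\alpha>1$. One minor point worth tidying: the anchor points $2^kRe_2$ are not lattice points of $\Lambda$ (which lives at half-integer coordinates), so you should take nearby genuine lattice points on the positive $x_2$-ray; this does not affect any estimate.
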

    \begin{proof}
    Choosing any $p$ with $\frac{2}{\alpha}<p<2$ we can apply Lemma \ref{lem:sobolev_emb} and in particular obtain a shift $v_{\infty}$ such that $v-v_{\infty} \to 0$ at infinity. A simple telescope sum to infinity that does not cross the crack then proves the result.
    \end{proof}
    
    \subsection{Interpolation of lattice functions}\label{sec:interpol}
    Given a lattice function $v\, \colon\,\La \to \R$, it will be useful to define an interpolation operator to obtain $Iv\, \colon\, \R^2\setminus \Gamma_0 \to \R$. We begin by recalling how this was handled in \cite[Section~4.2.1.]{2018-antiplanecrack}, followed by necessary modifications needed for our purposes. 
    
    We begin by defining 
    \begin{equation}\label{eqn-regions-of-domain}
    \Omega_\Gamma := \left\{x \in \R^2 \mid x_1 \leq \tfrac12, \; x_2 \in \left(-\tfrac12,\tfrac12\right)\right\} \setminus \Gamma_0,\quad \Omega_0 := \Omega_{\Gamma} \cap \left[-\tfrac12,\tfrac12\right]^2, \quad Q_0 := \Omega_0 \cap \Gamma. 
    \end{equation}
    The default interpolation operator $I$ is defined as follows. 
    On $\R^2 \setminus \Omega_\Gamma\cup \Gamma_0$, that is away from the discrete crack surface $\Gamma$, the squares in the lattice are carved into two right-angle triangles along one of the two diagonals (e.g. top left to bottom right) and $I$ is defined as the (P1) piecewise linear interpolation operator. 
        
    We also want $Iv$ to be well-defined on $\Omega_\Gamma$ and continuous across $\Gamma$. Away from the crack tip this is is achieved by extending it so that it aligns with the the values of $Iv(x)$ for $x \in \Gamma$ and is constant in the normal direction. At the crack tip, inside $Q_0$ (defined in \eqref{eqn-regions-of-domain}), we create two new interpolation points, one precisely at the crack tip $x_0 := (0,0)$ and the other at $\tilde x:= (-\tfrac12,0) \in \Gamma_0$. We define the interpolation there, $Iu(x_0)$, as the average of the four lattice points inside $\overline Q_0$ and further $\lim_{x \to \tilde x^\pm}Iv(x) = v(\tilde x_\pm)$, where $\tilde x_\pm = \tilde x + (0,\pm \tfrac12)$. By construction, the resulting P1 interpolant does not need to be continuous across $\Gamma_0$.
    
    \begin{figure}[htbp]
        \begin{subfigure}[c]{0.48\textwidth}
            \centering
                    {{\includegraphics[trim = 0 0 0 0,clip,width=\textwidth]{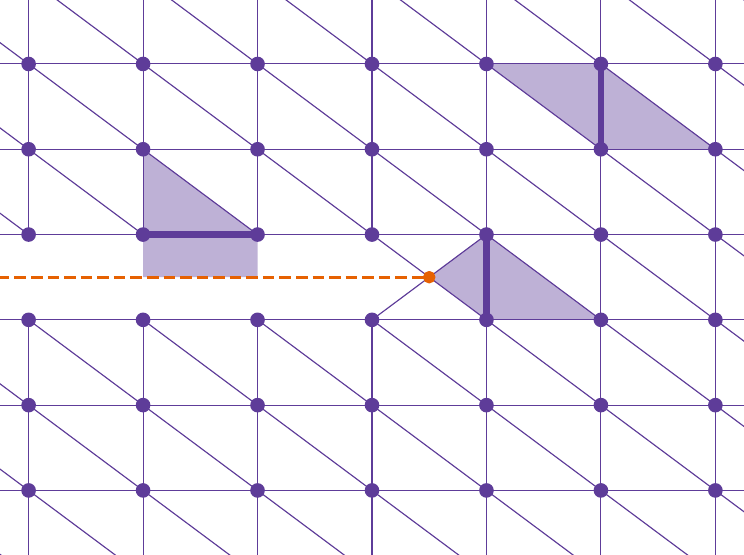}}}
        \end{subfigure}
        \hfill
        \begin{subfigure}[c]{0.48\textwidth}
            \centering
                    {{\includegraphics[trim = 0 0 0 0,clip,width=\textwidth]{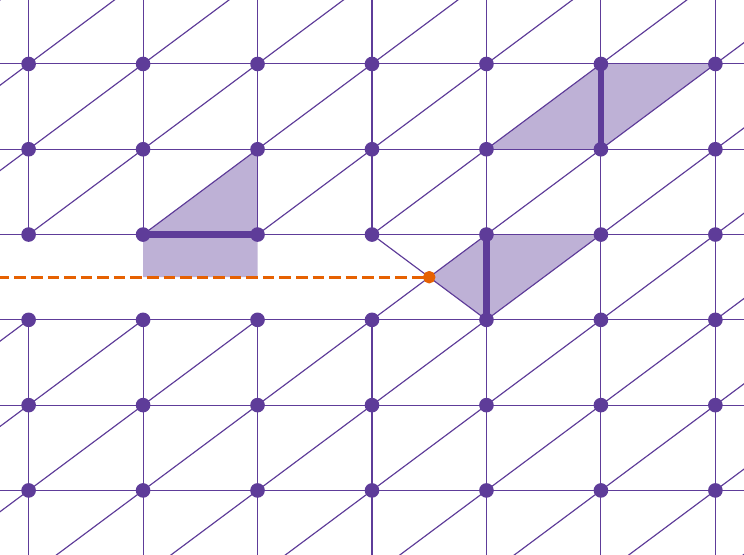}}}
        \end{subfigure}
    \caption{The default interpolation (left) and the mirror interpolation (right). Three example bonds $b(m,\rho)$ are highlighted together with regions $R_{m,\rho}$ associated with it.}
           \label{fig2}
    \end{figure}
    
    Under this interpolation, to each bond $b(m,\rho)$ for $m \in \La$, $\rho \in \Rc(m)$ we can associate the following regions. Let $U_{m,\rho}$ denote the union of all right-hand triangles for which the bond $b(m,\rho)$ is an edge; let $Q_{m,\rho}$ denote the union of all rectangular strips inside $\Omega_\Gamma$ for which the bond $b(m,\rho)$ is an edge; and let $Q^0_{m,\rho}$ denote the union of all triangles inside $Q^0$ for which $b(m,\rho)$ is an edge; finally leading to $R_{m,\rho} := U_{m,\rho} \cup Q_{m,\rho} \cup Q^0_{m,\rho}$, which is the union of all regions associated with the bond $b(m,\rho)$. 
    
    Clearly, if $b(m,\rho)\not\in \Gamma$, then there are two triangles in $U_{m,\rho}$, whereas $Q_{m,\rho} = Q^0_{m,\rho} = \emptyset$. If $b(m,\rho)\in \Gamma \setminus Q_0$, then there is one triangle in $U_{m,\rho}$, one rectangular strip in $Q_{m,\rho}$ and $Q^0_{m,\rho} = \emptyset$. Finally, if $b(m,\rho) \subset Q_0$, then there is one triangle in $U_{m,\rho}$, $Q_{m,\rho} = \emptyset$, and there is one triangle in $Q^0_{m,\rho}$. See Figure~\ref{fig2} for a visual intuition. 
    
    By construction, we thus get the equality
    \[
    \int_{\R^2\setminus \Gamma_0} \nabla u(x) \cdot \nabla Iv(x)dx = \frac12 \sum_{m \in \La}\sum_{\rho \in \Rc(m)} \left(\int_{R_{m,\rho}}\nabla_{\rho}u(x)dx\right)D_{\rho}v(m),
    \]
    whenever the left hand-side is well defined. Note that the $\frac12$ prefactor enters because of the double counting of bonds. 
    
    This is used in \cite{2018-antiplanecrack} to show that the predictor $\hat u_0$ satisfies 
    \begin{equation}\label{eqn-duhat0-hhat0}
    \sum_{m \in \La} D \hat u_0(m) \cdot Dv(m) = \sum_{m \in \La} \hat h_0(m) \cdot Dv(m),
    \end{equation}
    where $\hat h_0 \in \ell^2(\La; \R^4)$ is given by
    \begin{align*}
        (\hat h_0)_{\rho}(m) = \begin{cases}
            D_{\rho}\hat u_0(m) - \int_{R_{m, \rho}} \nabla_{\rho}\hat u_0(x)dx, \quad &b(m,\rho) \not\subset Q_0,\\
            D_{\rho}\hat u_0(m) - C_{m+\rho,m}, \quad &b(m,\rho) \subset Q_0.
        \end{cases}
    \end{align*}
    where the formulae for the finite $C_{m+\rho,m}$ are reported in \cite[Section~4.2.1.]{2018-antiplanecrack}. By construction, we also have that
    \begin{subequations} \label{eqn:hath_0-decay}
        \begin{align}
        b(m,\rho) \not\subset \Gamma &\implies |(\hat h_0(m))_{\rho}| \lesssim |\nabla^3 \hat u_0(m)| \lesssim |m|^{-5/2}\\
        b(m,\rho) \subset \Gamma\setminus Q_0 &\implies |(\hat h_0(m))_{\rho}| \lesssim |\nabla^2 \hat u_0(m)| \lesssim |m|^{-3/2}.
    \end{align}
    \end{subequations}
    The estimate for $\hat h_0$ when $b(m,\rho) \subset \Gamma \setminus Q_0$ turns out to be insufficient for our purposes. The estimate is sharp though and the way to improve the behaviour is to use the symmetrised interpolation instead, which we will now introduce. 
    
    Building upon the default interpolation operator $I$, we first introduce its mirrored version $I^{\rm mir}$, which, away from $\R^2 \setminus \Omega_{\Gamma}$, is defined via carving the squares into right-angle triangles across the other diagonal. Inside $\Omega_{\Gamma}$ we take $I^{\rm mir}$ to be the same as $I$. Naturally this operator has all the same properties as $I$ itself. See Figure~\ref{fig2} for a visual comparison between $I$ and $I^{\rm mir}$.
    
    This allows us to define a symmetric interpolation operator $I^{\rm sym}$ as
    \[
    I^{\rm sym}v := \frac{1}{2}(I v + I^{\rm mir} v).
    \]
    Another way of thinking about $I^{\rm sym}$ is to set the function to be the average of all four corners at the midpoint of each square and then interpolate affine on the 4 resulting triangles.
     
    Returning to \eqref{eqn-duhat0-hhat0}, we observe that using $I^{\rm sym}$ we obtain the improved estimate
    \begin{equation}\label{eqn-duhat0-hhat0-sym}
    \sum_{m \in \La} D \hat u_0(m) \cdot Dv(m) = \sum_{m \in \La} \hat h^{\rm sym}_0(m) \cdot Dv(m),
    \end{equation}
    where 
    \[
    (\hat h_0^{\rm sym}(m))_{\rho} = \frac{1}{2} \left((\hat h_0(m))_{\rho} + (\hat h_0^{\rm mir}(m))_{\rho}\right),
    \]
    for which we can prove the following. 
    \begin{lemma}\label{lem:symmetric_I_decay}
        The function $\hat h_0^{\rm sym}$ defined in \eqref{eqn-duhat0-hhat0-sym} satisfies
        \begin{equation}
            |\hat h_0^{\rm sym}(m)| \lesssim \lvert m \rvert^{-\frac{5}{2}}.
        \end{equation}
    \end{lemma}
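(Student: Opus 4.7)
The plan is to split the estimate according to the location of $b(m,\rho)$ relative to the crack and exploit the symmetry between $I$ and $I^{\rm mir}$ to cancel the low-order term that obstructs the $|m|^{-5/2}$ bound on $\hat h_0$ itself.

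First, for bonds $b(m,\rho) \not\subset \Gamma$ the existing estimate in \eqref{eqn:hath_0-decay} already gives $|\hat h_0(m)_\rho| \lesssim |m|^{-5/2}$; the same argument applied to $I^{\rm mir}$ yields $|\hat h_0^{\rm mir}(m)_\rho| \lesssim |m|^{-5/2}$, and averaging preserves the bound. For the finitely many bonds meeting $Q_0$ the quantity $|m|$ is bounded, so any pointwise estimate suffices. The substantive case is therefore $b(m,\rho) \subset \Gamma \setminus Q_0$.

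For such a bond, $R_{m,\rho}$ consists of a single triangle $U_{m,\rho}$ attached to the non-crack side of the bond together with a rectangular strip $Q_{m,\rho} \subset \Omega_\Gamma$. Since $I$ and $I^{\rm mir}$ coincide on $\Omega_\Gamma$ by construction, the strip contribution to $\hat h_0$ and $\hat h_0^{\rm mir}$ is identical, whereas the triangles $U_{m,\rho}$ and $U^{\rm mir}_{m,\rho}$ are reflections of each other across the perpendicular bisector of the bond. Writing $x_0 = m + \rho/2$ for the bond midpoint and Taylor expanding
\[
\nabla_\rho \hat u_0(x) = \nabla_\rho \hat u_0(x_0) + \nabla^2 \hat u_0(x_0)(x-x_0) \cdot \rho + O\bigl(|x-x_0|^2 |\nabla^3 \hat u_0(x_0)|\bigr),
\]
together with the centered-difference identity $D_\rho \hat u_0(m) = \nabla_\rho \hat u_0(x_0) + O(|\nabla^3 \hat u_0(x_0)|)$, one sees that the leading $O(|m|^{-3/2})$ remainder in $\hat h_0(m)_\rho$ is a linear combination of $\partial_\rho^2 \hat u_0(x_0)$ and $\partial_{\rho^\perp}\partial_\rho \hat u_0(x_0)$, with coefficients given by the first moments of the integration domain against $(x-x_0)$. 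The reflection flips the sign of the $\rho$-component of $\int_{U_{m,\rho}}(x-x_0)\,dx$, so the $\partial_\rho^2 \hat u_0$ contribution cancels in $\hat h_0^{\rm sym}$; the $\rho^\perp$-component is unchanged and survives, together with the corresponding surviving contribution from the strip, as a single coefficient of $\partial_{\rho^\perp}\partial_\rho \hat u_0(x_0)$.

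It remains to show that this residual crossed-derivative contribution is itself of order $|m|^{-5/2}$. This follows from the structure of $\hat u_0 = K r^{1/2}\sin(\theta/2)$: a direct computation gives $\partial_2 \omega_2 = \tfrac{1}{2\sqrt{r}}\cos(\theta/2)$, which vanishes identically on the continuum crack face $\Gamma_0$ (where $\theta = \pm\pi$), so $\partial_{\rho^\perp}\hat u_0 \equiv 0$ on $\Gamma_0$ and hence $\partial_\rho \partial_{\rho^\perp}\hat u_0 \equiv 0$ along $\Gamma_0$ too. Taylor expansion from $\Gamma_0$ to $x_0$, which lies at fixed distance $\tfrac{1}{2}$ from the crack face, then yields $|\partial_{\rho^\perp}\partial_\rho \hat u_0(x_0)| \lesssim |\nabla^3 \hat u_0(x_0)| \lesssim |m|^{-5/2}$. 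Combined with the Taylor remainder, which is already $O(|m|^{-5/2})$, and the cancellation of the $\partial_\rho^2 \hat u_0$ term established above, this gives $|\hat h_0^{\rm sym}(m)_\rho| \lesssim |m|^{-5/2}$. The main technical hurdle is the bookkeeping of the moment integrals over triangle and strip, and the observation that the mirror-symmetric piece specifically kills the coefficient of the pure second derivative $\partial_\rho^2 \hat u_0$ (the sole $O(|m|^{-3/2})$-level obstruction in the original estimate), while the surviving crossed-derivative coefficient has improved decay for an independent reason linked to the traction-free character of the crack face.
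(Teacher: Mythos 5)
Your proof is correct and follows essentially the same route as the paper's: Taylor-expand $\nabla_\rho\hat u_0$ at the bond midpoint $m+\tfrac12\rho$, note that the mirror-symmetrisation annihilates the $\rho$-directed first moment (hence the $\partial_\rho^2\hat u_0$ contribution), and bound the surviving crossed derivative $\partial_{\rho^\perp}\partial_\rho\hat u_0$ by Taylor-expanding from the crack face using the Neumann condition $\partial_2\hat u_0=0$ on $\Gamma_0$. The only difference is presentational: the paper packages the moment cancellation by explicitly computing the weighted centroid $x_{m\rho}$ and observing that its displacement from $m+\tfrac12\rho$ lies purely along $e_2$, which is exactly the statement that the $\rho$-moment vanishes.
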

    \begin{proof}
        For $b(m,\rho) \not\subset \Gamma$ the result follows directly from \eqref{eqn:hath_0-decay}. So suppose $b(m,\rho) \subset \Gamma \setminus Q_0$. By definition, we have
        \begin{align*}
            (\hat h_0^{\rm sym}(m))_{\rho} &= \frac{1}{2} \left((\hat h_0(m))_{\rho} + (\hat h_0^{\rm mir}(m))_{\rho}\right)\\
            &= D_\rho \hat{u}_0(m) - \frac{1}{2} \int_{U_{m\rho}} \nabla_\rho \hat{u}_0\,dx - \frac{1}{2} \int_{U_{m\rho}^{\rm mir}} \nabla_\rho \hat{u}_0\,dx - \int_{Q_{m\rho}} \nabla_\rho \hat{u}_0\,dx
        \end{align*}
        Taylor expanding the three integrals around their midpoints and using $\lvert \nabla^3 \hat{u} \rvert \lesssim \lvert m \rvert^{-\frac{5}{2}}$, we find 
        \[
       (\hat h_0^{\rm sym}(m))_{\rho} = \nabla_\rho \hat{u}_0(m + \frac{1}{2} \rho) - \frac{1}{4} \nabla_\rho \hat{u}_0(x_{U_{m\rho}}) - \frac{1}{4} \nabla_\rho \hat{u}_0(x_{U_{m\rho}^{\rm mir}})- \frac{1}{2} \nabla_\rho \hat{u}_0(x_{Q_{m\rho}}) + O(\lvert m \rvert^{-\frac{5}{2}}),\]
        where $x_A = \frac{1}{\lvert A \rvert} \int_A x \,dx$. The terms involving $\nabla^2 \hat{u}_0$ vanished due to these being the midpoints.
        
        Using the same idea again, we can Taylor expand each term in the weighted sum 
        \[
        \frac{1}{4} \nabla_\rho \hat{u}_0(x_{U_{m\rho}}) + \frac{1}{4} \nabla_\rho \hat{u}_0(x_{U_{m\rho}^{\rm mir}})+ \frac{1}{2} \nabla_\rho \hat{u}_0(x_{Q_{m\rho}})
        \]
        around the weighted midpoint
        \[ x_{m\rho} = \frac{1}{4} x_{U_{m\rho}} + \frac{1}{4} x_{U_{m\rho}^{\rm mir}}+ \frac{1}{2} x_{Q_{m\rho}}.\]
        Again, the terms involving $\nabla^2 \hat{u}_0$ cancel and we get
        \[
        (\hat h_0^{\rm sym}(m))_{\rho} = \nabla_\rho \hat{u}_0\big(m + \frac{1}{2} \rho\big) - \nabla_\rho \hat{u}_0(x_{m\rho}) + O(\lvert m \rvert^{-\frac{5}{2}}).
        \]
        Despite the symmetrisation, these two points are still different. Direct calculations give
        \begin{align*}
        x_{Q_{m\rho}} &=m + \frac{1}{2}\rho - \frac{1}{4} \sgn(m_2) e_2,\\
        \frac{1}{2} x_{U_{m\rho}^{\rm mir}} + \frac{1}{2} x_{U_{m\rho}}&=  m + \frac{1}{2}\rho + \frac{1}{3} \sgn(m_2) e_2,
        \end{align*}
        and thus
        \[x_{m\rho} =m + \frac{1}{2}\rho + \frac{1}{24} \sgn(m_2) e_2. \]
        In particular, we have
        \[(x_{m\rho})_1 = \left(m + \tfrac{1}{2} \rho\right)_1.\]
        With $y_{m\rho} = \frac{1}{2}(m + \frac{1}{2} \rho+x_{m\rho})=m + \frac{1}{2}\rho + \frac{1}{48} \sgn(m_2) e_2$, it follows that
        \begin{align*}
        (\hat h_0^{\rm sym}(m))_{\rho} &= -\frac{1}{24} \sgn(m_2) \partial_2 \nabla \hat{u}(y_{m\rho}) + O(\lvert m \rvert^{-\frac{5}{2}}).
        \end{align*}
        But now the boundary condition $\partial_2 \hat{u}_0 =0$ on $\Gamma_0$ implies that $\partial_2 \partial_1 \hat{u}_0((y_{m\rho})_1, 0^{\pm}) =0$. We thus also find
        \[\lvert \partial_2 \nabla_\rho \hat{u}(y_{m\rho}) \rvert= \lvert \partial_2 \nabla_\rho \hat{u}(y_{m\rho})-\partial_2 \nabla_\rho \hat{u}_0((y_{m\rho})_1, 0^{\pm}) \rvert \lesssim \lvert m \rvert^{-\frac{5}{2}}.\]
        Here, $0^{\pm}$ denotes the continuous extension on $\Gamma_0$ of $\hat u_0$ on the side corresponding to $\sgn(m_2)$.
    \end{proof}    
    
    \section{Proofs: The lattice Green function} \label{sec:proofsgreensfunction}
    \subsection{Setup and prerequisites}
    \begin{definition}\label{def-Ghom}
        A function $G_{\rm hom}\,\colon\, \La \to \R$ is said to be a \emph{homogeneous lattice Green's function} if 
        \[
        -{\rm Div}^{\rm hom} D^{\rm hom} G_{\rm hom}(m-\ell) = \delta(m,\ell),
        \]
        where $\delta(m,l)$ is the Kronecker delta and $D^{\rm hom} u(m) := (D_{\rho}u(m))_{\rho \in \Rc}$ (i.e. with no interaction bonds removed) and the same for ${\rm Div}^{\rm hom}$.
    \end{definition}
    \begin{theorem}[\protect{\cite[Section~6.2]{EOS2016}}]
        There exists a homogeneous lattice Green's function ${G_{\rm hom}\,\colon\, \La \to \R}$ in the sense of Definition~\ref{def-Ghom} and it satisfies a decay estimate
        \[
        |D^j G_{\rm hom}(m-\ell)| \lesssim (1+|m-\ell|^j)^{-1}.
        \]
    \end{theorem}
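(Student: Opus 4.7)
Since $\La$ is a translate of $\Z^2$ and the operator $-{\rm Div}^{\rm hom}D^{\rm hom}$ is translation-invariant, it suffices to construct $G_{\rm hom}$ as a function on $\Z^2$ via the discrete Fourier transform. The operator acts on plane waves $e^{ik\cdot m}$ by multiplication by a trigonometric symbol $\hat a(k)$ that is non-negative, vanishes only at $k=0$ modulo $2\pi\Z^2$, and satisfies $\hat a(k) \asymp |k|^2$ near the origin. The natural candidate is
\[
G_{\rm hom}(m) := \frac{1}{(2\pi)^2}\int_{(-\pi,\pi)^2}\frac{e^{ik\cdot m}-1}{\hat{a}(k)}\,dk,
\]
where subtracting the constant $1$ in the numerator is the standard two-dimensional regularisation: since $e^{ik\cdot m}-1 = \mathcal{O}(|k|)$, the integrand is $\mathcal{O}(|k|^{-1})$, which is integrable near the origin in two dimensions. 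Because constants lie in the kernel of $-{\rm Div}^{\rm hom}D^{\rm hom}$, the defining equation is verified by applying the operator under the integral sign: this multiplies the integrand by $\hat a(k)$ and leaves $(2\pi)^{-2}\int e^{ik\cdot m}\,dk = \delta(m,0)$.

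For the decay estimate I would compute
\[
D^j G_{\rm hom}(m) = \frac{1}{(2\pi)^2}\int_{(-\pi,\pi)^2} b_j(k)\, e^{ik\cdot m}\,dk,\qquad b_j(k) := \frac{\prod_{\ell=1}^{j}(e^{ik\cdot \rho_\ell}-1)}{\hat a(k)},
\]
and note that $b_j$ is bounded uniformly in $k$, which handles the regime $|m|\leq 1$. For $|m|\gg 1$ the standard stationary-phase argument applies: split the torus into $B_{1/|m|}(0)$ and its complement, use the pointwise bound $|b_j(k)| \lesssim |k|^{j-2}$ on the ball (which integrates in polar coordinates to order $|m|^{-j}$), and integrate by parts $j$ times on the complement via $e^{ik\cdot m} = |m|^{-2}\sum_i (-im_i)\partial_{k_i}e^{ik\cdot m}$, exploiting the estimates $|\partial_k^\alpha b_j(k)| \lesssim |k|^{j-2-|\alpha|}$ away from the origin together with careful treatment of boundary terms at the interface, so as to absorb any potential logarithmic losses. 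Combining the two pieces yields $|D^j G_{\rm hom}(m)| \lesssim |m|^{-j}$ for $|m|\geq 1$, which together with the uniform bound for $|m|\leq 1$ gives the claimed $(1+|m|^j)^{-1}$ estimate.

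The main technical subtlety is that $b_j$ has only limited regularity at $k=0$ when $j<2$, which forbids a global integration by parts and necessitates the splitting argument above. An arguably cleaner alternative, and the one taken in \cite{EOS2016}, is to compare $G_{\rm hom}$ with the continuum fundamental solution $-\tfrac{1}{2\pi}\log|m|$ of the Laplacian on $\R^2$: the discrete remainder is then harmonic at infinity in the discrete sense with strictly better decay, and one transfers decay of $G_{\rm hom}$ to decay of its finite differences using discrete elliptic regularity. Either route delivers the claimed bound, which we invoke directly from \cite{EOS2016}.
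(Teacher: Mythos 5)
This theorem is invoked in the paper as an external result, cited verbatim from \cite[Section~6.2]{EOS2016}; there is no in-paper proof to compare against, so your supplying a self-contained argument is in the spirit of the exercise rather than a reproduction of the paper's route.

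Your Fourier-analytic construction is correct: the regularisation $(e^{ik\cdot m}-1)/\hat a(k)$ is integrable near $k=0$ in two dimensions because the numerator is $\mathcal{O}(|k|)$ while $\hat a(k)\asymp|k|^2$, and applying $-{\rm Div}^{\rm hom}D^{\rm hom}$ under the integral kills the constant and produces $\delta(m,0)$, as you say. Two remarks. First, the stated bound can only be intended for $j\geq 1$: for $j=0$ the right-hand side is a constant while $G_{\rm hom}$ grows logarithmically, so you should flag this restriction explicitly rather than leave it implicit in the phrase ``decay estimate.'' Second, the step you wave at — ``careful treatment of boundary terms at the interface, so as to absorb any potential logarithmic losses'' — is precisely where the work is, and deserves more than a parenthetical. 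The naive split-plus-$j$-fold-IBP argument gives $|m|^{-j}\log|m|$, since $\int_{|k|\geq 1/|m|}|k|^{-2}\,dk$ on the torus produces a logarithm; removing it requires either one further integration by parts (invoking $|\partial_k^{j+1}b_j(k)|\lesssim|k|^{j-3-j}=|k|^{-3}$, which is then integrable on the annulus and gains the extra power), or a cancellation structure that you have not exhibited. Your suggested alternative — comparing $G_{\rm hom}$ with the continuum fundamental solution $-\tfrac1{2\pi}\log|\cdot|$ and then transferring decay via discrete elliptic regularity — sidesteps the log cleanly and is indeed closer to how the cited reference and its antecedents (e.g.\ Ortner--Theil) handle the 2D case. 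As written, the proposal is a plausible outline with the hard part elided; to be a complete proof, the log-absorption step needs to be carried out.
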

    To obtain zero-order results for the lattice Green's function in the anti-plane crack geometry, in \cite{2018-antiplanecrack} a discrete complex square root manifold is introduced to obtain decay results for atoms close to the crack surface. The full account is presented in \cite[Lemma~4.8, Case 2]{2018-antiplanecrack} (see also \cite{2020-near-crack-tip-plas} for a more in-depth discussion of this geometric framework with application to near crack-tip plasticity). In what follows, to simplify presentation, we use a more minimal presentation, for which the following definition is needed. 
    
    \begin{definition}[Reflected lattice point and function]\label{def-ref}
        Given a lattice point $m=(m_1,m_2) \in \La$, we define its reflected counterpart as $m_{\rm ref} := (m_1,-m_2) \in \La$. Likewise, given a lattice function $u\,\colon\, \La \to \R$, its reflected counterpart $u_{\rm ref}\,\colon\,\La \to \R$ is defined as, for $m = (m_1,m_2) \in \La$
        \[
            u_{\rm ref}(m) := \begin{cases}
                u(m_{\rm ref} )\quad &\text{if }\,m_1 < 0, m_2 > 0\\
                u(m)\quad &\text{otherwise}.
            \end{cases}
        \]
    \end{definition}
    
    Finally, we also note that the symmetric interpolation introduced in Section~\ref{sec:interpol} allows us to retrace the proof Lemma~\ref{lem:symmetric_I_decay} and arrive at the following useful result. 
    \begin{lemma}\label{lem-G_0-I-sym}
    The predictor $\hat G_0$ from \eqref{eqn-hatG0-formula} satisfies
    \[
    \sum_{m \in \La} D_m \hat G_0(m,s) \cdot Dv(m) - v(s) = \sum_{m \in \La} \hat g_0(m) \cdot Dv(m),
    \]
    where $|\hat g_0(m)| \lesssim |\nabla^3 \hat G_0(m,s)|$.
    \end{lemma}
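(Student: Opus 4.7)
The plan is to mirror the proof of Lemma~\ref{lem:symmetric_I_decay}, applying the symmetric interpolant $I^{\rm sym}$ from Section~\ref{sec:interpol} to $\hat G_0(\cdot, s)$ in place of $\hat u_0$. The crucial new ingredient is the observation that the explicit formula \eqref{eqn-hatG0-formula} is the continuum Neumann Green's function on the cracked plane: because $\omega$ conformally maps $\R^2 \setminus \Gamma_0$ onto the upper half-plane and $F$ is the Newtonian potential, the reflected source $\omega^*(s)$ is placed precisely so that
\[
-\Delta_m \hat G_0(m,s) = \delta(m-s) \ \text{on}\ \R^2 \setminus \Gamma_0, \qquad \partial_{m_2}\hat G_0 = 0 \ \text{on}\ \Gamma_0.
\]

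First, I would rewrite the left-hand side using the bond--region decomposition $\{R_{m,\rho}\}$ associated with $I^{\rm sym}$ to obtain
\[
\sum_{m \in \La} D_m \hat G_0(m,s) \cdot Dv(m) = \int_{\R^2 \setminus \Gamma_0} \nabla \hat G_0(x,s) \cdot \nabla I^{\rm sym} v(x)\,dx + \sum_{m\in\La} \hat g_0(m) \cdot Dv(m),
\]
where each $(\hat g_0(m))_\rho$ is the discrepancy between $D_\rho\hat G_0(m,s)$ and the averaged integral of $\nabla_\rho \hat G_0$ over $R_{m,\rho}$ (with the usual explicit corrections $C_{m+\rho,m}$ for bonds inside $Q_0$, exactly as in Section~\ref{sec:interpol}). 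Continuum integration by parts, combined with the Neumann identity above and the fact that $I^{\rm sym} v(s) = v(s)$ at the lattice node $s$, produces the $v(s)$ term on the left-hand side as claimed.

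The remaining task is the pointwise bound $|\hat g_0(m)| \lesssim |\nabla^3 \hat G_0(m,s)|$, which I would establish bond by bond exactly as in the proof of Lemma~\ref{lem:symmetric_I_decay}. For bulk bonds, a Taylor expansion around the bond midpoint suffices. For bonds along $\Gamma \setminus Q_0$, I would reproduce the three-step argument there: the symmetric average of $I$ and $I^{\rm mir}$ kills the $\nabla^2 \hat G_0$ contribution, leaving a residual proportional to $\partial_2 \nabla_\rho \hat G_0(y_{m\rho})$; differentiating the Neumann condition tangentially shows that $\partial_2\partial_1 \hat G_0 \equiv 0$ on $\Gamma_0$, and a final Taylor step to the crack closes the $|\nabla^3 \hat G_0|$ estimate. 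The finitely many bonds inside $Q_0$ are handled through the explicit constants $C_{m+\rho,m}$, as in \cite{2018-antiplanecrack}, which already yield the desired order.

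The main obstacle, and the only genuinely new conceptual point beyond repeating Lemma~\ref{lem:symmetric_I_decay}, is the verification that the reflected-source structure built into \eqref{eqn-hatG0-formula} really produces $\partial_{m_2}\hat G_0 = 0$ on $\Gamma_0$. This follows by noting that for $m \in \Gamma_0$ the arguments $-\omega(m) + \omega(s)$ and $-\omega(m) + \omega^*(s)$ are complex conjugates, so the chain rule through $\omega$ produces cancelling normal contributions. Once this symmetry is in place, the remainder of the proof is a faithful transcription of the argument already carried out for $\hat u_0$.
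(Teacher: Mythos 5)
Your proposal is essentially the paper's intended argument. The paper only states that one should ``retrace the proof of Lemma~\ref{lem:symmetric_I_decay}'', and your write-up fleshes that out faithfully: convert the lattice sum to a continuum Dirichlet form against $I^{\rm sym}v$ plus a bond-wise correction, use the continuum PDE for $\hat G_0$ to extract the $v(s)$ term, and then bound the correction bond-by-bond exactly as in Lemma~\ref{lem:symmetric_I_decay} (with the symmetrisation killing the $\nabla^2$ term, and the tangentially-differentiated Neumann identity $\partial_1\partial_2\hat G_0=0$ on $\Gamma_0$ giving the decisive extra order along $\Gamma$).

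Two small normalisation remarks for precision, neither of which affects the validity. First, with $F(x)=-\frac{1}{4\pi}\log|x|$ and the conformal change of variables one actually gets $-\Delta_m\hat G_0(m,s)=\tfrac12\,\delta(m-s)$, not $\delta$; correspondingly, the bond-region identity in Section~\ref{sec:interpol} carries a prefactor so that the lattice Dirichlet form equals $2\int\nabla\hat G_0\cdot\nabla I^{\rm sym}v\,dx$ plus the correction. These two factors of $2$ cancel, so the end result $v(s)$ is as you state, but the individual displays should each carry the missing factor. Second, for $m\in\Gamma_0$ the arguments $-\omega(m)+\omega(s)$ and $-\omega(m)+\omega^*(s)$ are not literally complex conjugates; as complex numbers they are related by $z\mapsto-\bar z$ (reflection across the imaginary axis). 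Since $F$ depends only on $|z|$, and since $\partial_1 F$ is odd in its first argument, the cancellation you invoke is nonetheless exactly right, so the Neumann condition $\partial_{m_2}\hat G_0=0$ on $\Gamma_0$ holds.
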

    
    \subsection{Improved estimates for $\bar{G}_0$}\label{sec:improved-barG0}
    In order to prove Theorem~\ref{thm-Gbar1-decay}, we first need to establish auxiliary results about $\bar G_0$, partially improving upon the results established in \cite{2018-antiplanecrack}. Let us start with a suboptimal estimate for  $D_1 D_1 D_2 \bar{G}_0$ in an annulus $B_{5\lvert s \rvert/8}(0) \setminus B_{3\lvert s \rvert/8}(0)$.
    \begin{lemma}\label{lem:D1D1D2_Gbar0}
    \begin{equation}
    \lvert D_1 D_1 D_2 \bar{G}_0(\ell,s) \lvert \lesssim \lvert s \rvert^{-3+\delta}
    \end{equation}
    for any $\ell \in B_{5\lvert s \rvert/8}(0) \setminus B_{3\lvert s \rvert/8}(0)$.
    \end{lemma}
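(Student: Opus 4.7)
The plan is to upgrade the known bound $|D_1 D_2 \bar G_0(\ell,s)| \lesssim |s|^{-2+\delta}$, which follows directly from Theorem~\ref{thm-G-old} and the exact formula for $\hat G_0$ in the annulus, to $|D_1^2 D_2 \bar G_0(\ell,s)| \lesssim |s|^{-3+\delta}$ by gaining one extra derivative via discrete interior regularity. The starting point is the weak equation satisfied by $\bar G_0(\cdot,s)$: combining the defining identity $\sum_m D_m G(m,s)\cdot Dv(m) = v(s)$ with Lemma~\ref{lem-G_0-I-sym} gives, for all compactly supported $v$,
\[
\sum_{m\in\La} D\bar G_0(m,s)\cdot Dv(m) = -\sum_{m\in\La} \hat g_0(m)\cdot Dv(m),\qquad |\hat g_0(m)| \lesssim |\nabla^3 \hat G_0(m,s)|.
\]
A direct differentiation of $\hat G_0 = F\circ(-\omega+\omega(s)) + F\circ(-\omega+\omega^*(s))$ using $|\nabla^k \omega(m)| \lesssim |m|^{1/2-k}$ and $|\nabla^k F(\omega(m)-\omega(s))| \lesssim |s|^{-k/2}$ shows $|\hat g_0(m)| \lesssim |s|^{-3}$ throughout the enlarged annulus $|m| \in (|s|/4, 3|s|/4)$.

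For $\ell \in B_{5|s|/8}\setminus B_{3|s|/8}$ I introduce a cutoff $\eta$ from Definition~\ref{def-cutoff-fcts}, equal to $1$ on $B_{|s|/32}(\ell)$ and supported in $B_{|s|/16}(\ell)$, which by construction sits inside the enlarged annulus and avoids both the source point $s$ and the crack tip. Applying Lemma~\ref{lem:pushing_eta} to the weak PDE above yields an equation of the form $-\mathrm{Div}\,D(\eta\bar G_0) = f$, where the effective source $f$ is compactly supported in $B_{|s|/16}(\ell)$ and satisfies $|f(m)| \lesssim |s|^{-3}$ on $\mathrm{supp}\,\eta$ together with cutoff-commutator contributions on $\mathrm{supp}\,D\eta$ of the form $|s|^{-1}|D\bar G_0(m,s)| + |s|^{-2}|\bar G_0(m,s)-C|$, where $C$ is chosen by Lemma~\ref{lem:decay_with_sobolev} applied to the shift available through Lemma~\ref{lem:sobolev_emb}. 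All three contributions are of order $|s|^{-3+\delta}$. Representing $D_1 D_1 D_2 \bar G_0(\ell,s) = D_1 D_1 D_2(\eta\bar G_0)(\ell)$ as a convolution against the third derivative of the homogeneous lattice Green's function $G_{\rm hom}$, and using $|D^j G_{\rm hom}(\cdot)| \lesssim (1+|\cdot|)^{-j}$, the result follows by bounding the resulting sum by $\|f\|_{\ell^1}$ times the local size of $D^3 G_{\rm hom}$.

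The main obstacle is the regime in which $\ell$ lies within $|s|/16$ of the crack line $\Gamma_0$, so that $\mathrm{supp}\,\eta$ straddles the slit and the cracked operator no longer agrees with the homogeneous one there. I propose to handle this case by symmetrising with respect to the reflection operator of Definition~\ref{def-ref}: far from the crack tip the missing-bond contribution is captured to leading order by $\hat G_0$, so the reflected corrector solves a homogeneous equation on the full square lattice up to an error that is again $O(|s|^{-3})$ on the relevant support. The $G_{\rm hom}$-convolution argument then applies verbatim on this extended lattice, producing the same $|s|^{-3+\delta}$ bound and completing the proof.
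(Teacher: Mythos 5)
Your overall strategy — cut off near $\ell$, represent the localised quantity via the homogeneous lattice Green's function $G_{\rm hom}$, split into a bulk term coming from the residual of $\hat G_0$ and a commutator term supported on ${\rm supp}\,D\eta$, and use reflection to handle the case where $\ell$ is near the crack — is indeed the strategy of the paper's proof. However, as written, the proof has genuine gaps.

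The most serious is the closing estimate: ``bounding the resulting sum by $\|f\|_{\ell^1}$ times the local size of $D^3 G_{\rm hom}$.'' With $|f|\lesssim |s|^{-3}$ on ${\rm supp}\,\eta$, a region containing $\sim|s|^2$ lattice points, one has $\|f\|_{\ell^1}\lesssim|s|^{-1}$, so the claimed bound delivers $|s|^{-1}$, far short of $|s|^{-3+\delta}$. The correct argument must use the spatial structure: the bulk part of the source must be kept in divergence form (i.e.\ as $-{\rm Div}_1(\eta\,\hat g_0)$ or similar) so that after one more integration by parts the kernel is genuinely the third derivative of $G_{\rm hom}$, which lies in $\ell^1$, paired against the pointwise-small vector field $\hat g_0$; while the commutator part lies on ${\rm supp}\,D\eta$, at distance $\sim|s|$ from $\ell$, where only $D^2 G_{\rm hom}(\ell-m)\sim|s|^{-2}$ is available, so one balances $|s|^2\cdot|s|^{-2}\cdot|s|^{-3+\delta}=|s|^{-3+\delta}$. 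These two regimes must be separated; a single Young-type estimate does not close the argument.

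A second, related imprecision is the handling of the $D_2$ (the $s$-derivative). You write $D_1D_1D_2\bar G_0 = D_1D_1D_2(\eta\bar G_0)$ and propose a convolution against ``$D^3 G_{\rm hom}$'', but $G_{\rm hom}$ carries only the $m$-variable, so the $D_2$ must be pushed to the source. Consequently the relevant bulk source is $D_2\hat g_0$, not $\hat g_0$, and the commutator estimates should use $D_2\bar G_0$ and $D_1D_2\bar G_0$, not $\bar G_0$ and $D\bar G_0$. As stated, your pointwise bounds $|s|^{-1}|D\bar G_0|$ and $|s|^{-2}|\bar G_0-C|$ are each only $O(|s|^{-2+\delta})$ in the annulus, not $O(|s|^{-3+\delta})$; they become $O(|s|^{-3+\delta})$ only once the extra $s$-derivative is included. (The paper sidesteps this bookkeeping by setting $v(m):=D_2 G(m,\ell)$ at the outset, so that the first-variable derivatives are what is being estimated.)

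Finally, the near-crack case is dismissed with ``the reflected corrector solves a homogeneous equation on the full square lattice up to an error that is again $O(|s|^{-3})$.'' That assertion needs a proof: the paper relies on the discrete square-root manifold construction of \cite{2018-antiplanecrack}, which makes the reflected $G_{\rm ref}=(\hat G_0)_{\rm ref}+(\bar G_0)_{\rm ref}$ satisfy an exact manifold analogue of $-{\rm Div}\,D G=\delta$, rather than an approximate homogeneous equation. Without this, the reproducing identity through $G_{\rm hom}$ is not available in the half of the cut-off ball that straddles $\Gamma_0$, and the error you are waving away is not obviously of the right size. So: same skeleton as the paper, but the convolution bound, the $D_2$ bookkeeping, and the reflection step all need to be repaired before the argument is complete.
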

    \begin{proof}
    Let $\ell \in B_{5\lvert s \rvert/8}(0) \setminus B_{3\lvert s \rvert/8}(0)$ and choose the cutoff as in Definition~\ref{def-cutoff-fcts} with $c=\frac{1}{8}$. In particular, $\eta(m) \neq 0$ implies $\tfrac{\lvert s \rvert}{4} \leq \lvert m \rvert \leq \tfrac{3\lvert s \rvert}{4}$. Assume without loss of generality that $\ell_2 < 0$. In particular the reflected function $(\bar G_0)_{\rm ref}$, defined in Definition~\ref{def-ref} with reflection only in the first variable, satisfies 
    \[
    D_1 D_2(\bar G_0)_{\rm ref}(\ell,s) = D_1 D_2\bar G_0(\ell,s).
    \]
    Further, set $\sigma \in \Rc(m)$. We then have
    \begin{align*}
     D_{1 \sigma} D_1 D_2 \bar{G}_0(\ell,s) &= D_{1 \sigma} ( D_1 D_2 (\bar{G}_0)_{\rm ref}(\ell,s) \eta(\ell) )\\
     &= \sum_{m \in \La} -{\rm Div} \tilde D G_{\rm hom} (m-\ell) D_{1 \sigma} ( D_1 D_2 (\bar{G}_0)_{\rm ref}(m,s)\eta(m))\\
     &= \sum_{m\in \Lambda} \sum_{\rho \in \Rc} D_\rho D_{ -\sigma} G_{\rm hom} (m-\ell) D_{1\rho} (D_1 D_2 (\bar{G}_0)_{\rm ref}(m,s)\eta(m))\\
     &= \sum_{m \in \La} \sum_{\rho \in \Rc} D_{\rho} \big( \eta(m) D_{ -\sigma} G_{\rm hom} (m-\ell)\big) D_{1\rho}D_1 D_2 (\bar{G}_0)_{\rm ref}(m,s)\\
     &-\sum_{m \in \La}\sum_{\rho \in \Rc}D_1D_2(\bar G_0)_{\rm ref}(m,s)D_\rho\eta(m)D_{\rho} D_{-\sigma}G_{\rm hom}(m-l)\\
     &-\sum_{m \in \La}\sum_{\rho \in \Rc} D_{-\sigma}G_{\rm hom}(m-l) D_{\rho}\eta(m)D_{1\rho}(D_1D_2(\bar G_0)_{\rm ref}(m,s)\\
     &=:S_1 + S_2 + S_3,
    \end{align*}
    where we have used Lemma~\ref{lem:pushing_eta} in the last equality.
    
    To estimate $S_1$, we introduce short-hand notation $v(m):=\eta(m)D_{ -\sigma} G_{\rm hom}(m-l)$ and observe that by using the discrete manifold construction described in \cite[Section~4.3.3.]{2018-antiplanecrack},
    \[
    G_{\rm ref} = (\hat G_0)_{\rm ref} + (\bar G_0)_{\rm ref}
    \]
    satisfies a manifold equivalent of the discrete PDE given in \eqref{eqn-G-pde}. With the support of $v$ bounded away both from the origin and from the new artificial cut at $m_1=0$, $m_2>0$. We can thus rewrite $S_1$ as follows.
    \begin{align*}
    S_1 &= \sum_{m\in\La}\sum_{\rho \in \Rc(m)}D_{1\rho}D_1D_2(\bar G_0)_{\rm ref}(m,s)D_{\rho}v(m)\\
    &= - \sum_{m\in\La}\sum_{\rho \in \Rc(m)}D_{1\rho}D_1D_2(\hat G_0)_{\rm ref}(m,s)D_{\rho}v(m) + DDv(s) = \\
    &= \sum_{b(m,\rho) \not\subset \Gamma} g^{\rm R}_{11}(m,\rho)D_{\rho}v(m) + \sum_{b(m,\rho)\subset \Gamma} h^{\rm R}_{11}(m,\rho)D_{\rho}v(m),
    \end{align*}
    where, for $m \in {\rm supp}\,\eta$, we have $|g^{\rm R}_{11}(m,\rho)| \lesssim |s|^{-5}$ and $|h^{\rm R}_{11}(m,\rho)| \lesssim |s|^{-4}$ are the terms obtained from introduced a single interpolation (see Section~\ref{sec:interpol}). The decay of $|Dv(m)|$ is thus enough to conclude that 
    \[
    |S_1| \lesssim |s|^{-4}.
    \]
    
    Note by using symmetric interpolation as described in Section~\ref{sec:interpol}, we can in fact improve this estimate to
    \[
        |S_1| \lesssim |s|^{-5}\log \lvert s \rvert,
    \]
    but this will not improve the overall result, as we are restricted by $S_2$ and $S_3$ anyway. 
    
    For $S_2$ we can directly estimate all terms pointwise for $m \in {\rm supp}\,D\eta$, allowing us to conclude that
    \[
    |S_2| \lesssim \sum_{m \in \La} \lvert D \eta(m) \rvert \lvert D^2G_{\rm hom}(m-\ell) \rvert \lvert D_1 D_2 (\bar{G}_0)_{\rm ref}(m,s) \rvert \lesssim \lvert s \rvert^{-3+\delta/2}.
    \]
    For $S_3$, we sum $D_\rho$ by parts to obtain
    \begin{align*}
    S_3 &= -\sum_{m \in \La}\sum_{\rho \in \Rc} D_{-\sigma}G_{\rm hom}(m-l) D_{\rho}\eta(m)D_{1\rho}(D_1D_2(\bar G_0)_{\rm ref}(m,s)\\
    &= - \sum_{m \in \La} -{\rm Div}(D_{-\sigma}G_{\rm hom}(m-l)D\eta(m)) D_1 D_2(\bar G_0)_{\rm ref}(m,s) \\
    &= - \sum_{m \in \La} -{\rm Div}D\eta(m) D_{-\sigma}G_{\rm hom}(m-l)D_1D_2(\bar G_0)_{\rm ref}(m,s)\\
    &- \sum_{m \in \La}\sum_{\rho \in \Rc}D_{-\rho}D_{-\sigma}G_{\rm hom}(m-l)D_{\rho}\eta(m-\rho)D_1 D_2(\bar G_0)_{\rm ref}(m,s).
    \end{align*}
    
    It thus follows from another direct pointwise estimate that 
    \begin{align*}
        \lvert S_3 \rvert &\lesssim \sum_{m \in \La} \lvert D^2 \eta(m) \rvert \lvert DG_{\rm hom}(m-\ell) \rvert \lvert D_1 D_2 (\bar{G}_0)_{\rm ref}(m,s)\rvert\\
        &+\sum_{m \in \La}\lvert D \eta(m) \rvert \lvert D^2G_{\rm hom}(m-\ell) \rvert \lvert D_1 D_2 (\bar{G}_0)_{\rm ref}(m,s) \rvert\\
        &\lesssim \lvert s \rvert^{-3+\delta/2} \nonumber
    \end{align*}
    \end{proof}
    
    Using Lemma~\ref{lem:D1D1D2_Gbar0}, we will now obtain an improved estimates on $D_1 D_2 \bar{G}_0$ in an annulus $B_{\tfrac{\lvert s \rvert}{2}}(0) \setminus B_{\tfrac{\lvert s \rvert}4}(0)$.
    \begin{lemma} \label{lem:Gbaroldannulus}
    It holds that 
    \begin{equation} \label{eq:Gbaroldannulus}
        \lvert D_1 D_2 \bar{G}_0(\ell,s) \rvert \lesssim \lvert s\rvert^{-3+\delta}.
    \end{equation}
    for $\lvert s \rvert/4 \leq \lvert \ell \rvert \leq \lvert s \rvert/2$.
    \end{lemma}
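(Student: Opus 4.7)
The plan is to adapt the argument from the proof of Lemma~\ref{lem:D1D1D2_Gbar0} but start with one fewer derivative, so that the improved bound from Lemma~\ref{lem:D1D1D2_Gbar0} itself becomes the critical input that upgrades the boundary-of-cutoff term from the trivial $|s|^{-2+\delta}$ to the desired $|s|^{-3+\delta}$. Fix $\ell$ with $|s|/4 \le |\ell| \le |s|/2$, WLOG $\ell_2<0$, and work with the reflected function $(\bar G_0)_{\rm ref}$ on the discrete manifold of \cite{2018-antiplanecrack}, which satisfies a homogeneous PDE in a neighbourhood of $\ell$ avoiding the crack tip, the source $s$, and the artificial cut. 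Choose a cutoff $\eta$ as in Definition~\ref{def-cutoff-fcts} centred at $\ell$ with a small constant $c$ (say $c=\tfrac{1}{32}$, if necessary after rerunning the proof of Lemma~\ref{lem:D1D1D2_Gbar0} with a slightly wider annulus of validity) so that $\eta(\ell)=1$ and ${\rm supp}\,\eta$ is contained in the annulus where Lemma~\ref{lem:D1D1D2_Gbar0} applies.

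Using $-{\rm Div}^{\rm hom}D^{\rm hom}G_{\rm hom}(\cdot -\ell)=\delta_\ell$, $\eta(\ell)=1$, and summing by parts twice on the homogeneous lattice, write
\begin{align*}
D_1D_2\bar G_0(\ell,s) &= \sum_{m\in\La}\bigl(-{\rm Div}^{\rm hom}D^{\rm hom}G_{\rm hom}\bigr)(m-\ell)\,D_1D_2(\bar G_0)_{\rm ref}(m,s)\,\eta(m)\\
&= \sum_{m\in\La}G_{\rm hom}(m-\ell)\cdot\bigl(-{\rm Div}^{\rm hom}D^{\rm hom}\bigr)\bigl(D_1D_2(\bar G_0)_{\rm ref}\,\eta\bigr)(m).
\end{align*}
Expanding the discrete Laplacian of a product via
\[
(-{\rm Div}^{\rm hom}D^{\rm hom})(f\eta)=\eta\cdot(-{\rm Div}^{\rm hom}D^{\rm hom}f)+f\cdot(-{\rm Div}^{\rm hom}D^{\rm hom}\eta)+2\sum_{\rho\in\Rc}D_\rho^{\rm hom}f\cdot D_\rho^{\rm hom}\eta
\]
with $f=D_1D_2(\bar G_0)_{\rm ref}(\cdot,s)$ decomposes the right-hand side into three terms $T_1+T_2+T_3$.

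For $T_1$ (the PDE residual): commute $D_1D_2$ with $-{\rm Div}^{\rm hom}D^{\rm hom}$ on the reflected manifold and use that $(\bar G_0)_{\rm ref}$ satisfies $-{\rm Div}^{\rm hom}D^{\rm hom}(\bar G_0)_{\rm ref}(\cdot,s)=-r_{\rm ref}(\cdot,s)$ on ${\rm supp}\,\eta$, where $r_{\rm ref}$ is the discretisation error of $\hat G_0$; after the symmetric-interpolation improvement (Lemmas~\ref{lem:symmetric_I_decay} and \ref{lem-G_0-I-sym}), counting derivatives gives $|D_1D_2 r_{\rm ref}(m,s)|\lesssim|s|^{-6}$ on ${\rm supp}\,\eta$, so summing against $|G_{\rm hom}(m-\ell)|\lesssim\log|s|$ over $\sim|s|^2$ points yields $|T_1|\lesssim|s|^{-4}\log|s|$. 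For $T_2$ (the $\eta$-Laplacian boundary term): use the old estimate $|D_1D_2\bar G_0(m,s)|\lesssim|s|^{-2+\delta}$ from Theorem~\ref{thm-G-old}, together with $|{\rm Div}^{\rm hom}D^{\rm hom}\eta|\lesssim|s|^{-2}$ supported on $\sim|s|$ points and $|G_{\rm hom}|\lesssim\log|s|$, to get $|T_2|\lesssim|s|^{-3+\delta}\log|s|$. For $T_3$ (the cross term, where the new input enters): bound $|D_\rho^{\rm hom}D_1D_2(\bar G_0)_{\rm ref}(m,s)|\lesssim|s|^{-3+\delta}$ by Lemma~\ref{lem:D1D1D2_Gbar0}, combined with $|D_\rho^{\rm hom}\eta|\lesssim|s|^{-1}$ on $\sim|s|$ points and $|G_{\rm hom}|\lesssim\log|s|$, giving $|T_3|\lesssim|s|^{-3+\delta}\log|s|$. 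Combining and absorbing the logarithm into $\delta$ delivers the claim.

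The hard part will be the careful justification of $T_1$: specifically, commuting $-{\rm Div}^{\rm hom}D^{\rm hom}$ with $D_1D_2$ on the reflected manifold across the artificial cut, and verifying that the resulting residual $D_1D_2 r_{\rm ref}$ decays at the claimed rate. A secondary but routine issue is confirming that the cutoff $\eta$ can be chosen to lie inside the annulus of validity of Lemma~\ref{lem:D1D1D2_Gbar0} for the full range $|\ell|\in[|s|/4,|s|/2]$, which may require a minor extension of Lemma~\ref{lem:D1D1D2_Gbar0} (same proof, a slightly wider outer cutoff). No new ideas beyond those in Lemma~\ref{lem:D1D1D2_Gbar0} should be needed.
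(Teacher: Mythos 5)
Your overall approach does not close, and it is also quite different from the paper's. The crucial error is the support count for the cutoff boundary terms $T_2$ and $T_3$. With the cutoff of Definition~\ref{def-cutoff-fcts}, $D\eta$ and $D^2\eta$ are supported on an annulus whose inner and outer radii both scale like $|s|$; this set contains $\sim |s|^2$ lattice points, not the $\sim |s|$ you use. Repeating your estimates with the correct count gives
\begin{align*}
|T_2| &\lesssim \log|s|\cdot |s|^{-2+\delta}\cdot |s|^{-2}\cdot |s|^{2} = |s|^{-2+\delta}\log|s|,\\
|T_3| &\lesssim \log|s|\cdot |s|^{-3+\delta}\cdot |s|^{-1}\cdot |s|^{2} = |s|^{-2+\delta}\log|s|,
\end{align*}
which is only the trivial bound already implied by Theorem~\ref{thm-G-old} and misses the claimed rate by a full factor of $|s|$. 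This is not an accident: after summing by parts twice you are left with a bare $G_{\rm hom}$, whereas in the proof of Lemma~\ref{lem:D1D1D2_Gbar0} the extra leading derivative $D_{1\sigma}$ is transferred to $G_{\rm hom}$, so the boundary terms there carry a $D^2 G_{\rm hom}\sim |s|^{-2}$ factor. Dropping one derivative on the left forfeits exactly that $|s|^{-1}$, and neither re-summing by parts nor a Poincaré subtraction on the annulus recovers it from the known a priori bound $|D_1D_2\bar G_0|\lesssim |s|^{-2+\delta}$. So the part you flag as routine is where the argument actually breaks, while the part you flag as the ``hard part'' ($T_1$) is in fact fine and of order $|s|^{-4}\log|s|$.

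The paper's proof proceeds along a genuinely different route. It sets $v(m)=D_2 G(m,\ell)$, uses the Green's function identity to write $D_1D_2\bar G_0(\ell,s)=T_1+T_2$ with $T_2=Dv(s)$, and then introduces a \emph{continuum} cutoff centred at the source $s$ (not at $\ell$). The key gain comes from the interplay between the lattice sums and the integral form of the PDE for $\hat G_0$ via the symmetric interpolation of Section~\ref{sec:interpol}: the near-source contributions $T_{1,1,1}+T_{2,1,1}$ cancel up to a first-order quadrature error along the crack, and $T_{1,1,2}+T_{2,1,2}$ cancel up to a $\nabla^3\hat G_0$ quadrature error (Lemma~\ref{lem-G_0-I-sym}). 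It is these cancellations between the explicit structure of $\hat G_0$ near $s$ and the test function $Dv$ that produce the improvement to $|s|^{-3+\delta}$; your localisation at $\ell$ uses none of this structure and therefore cannot beat the trivial rate.
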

    
    \begin{proof}
    Set $v(m):= D_2G(m,l)$. We have
    \begin{align}
        D_1 D_2 \bar{G}_0(\ell,s) &= \sum_{m \in \La} \sum_{\rho \in \Rc(m)} D_{1\rho} D_2 \bar{G}_0(m,s) D_{\rho}v(m) \nonumber\\
        &= \underbrace{\sum_{m \in \La}\sum_{\rho \in \Rc(m)} - D_{\rho} D_2 \hat{G}_0(m,s) D_{\rho} v(m)}_{=: T_1} + \underbrace{\vphantom{\sum_{\rho \in \Rc(m)} }Dv(s)}_{=:T_2}\label{eqn:d1d2Gbar0}.
    \end{align}
    To elucidate the idea of proving this result by employing an appropriate cut-off function, followed by summation / integration by parts, we begin by recalling Definition~\ref{def-cutoff-fcts} and introducing a continuum cut-off function $\eta \,\colon\, \R^2 \to \R$ given by 
    \[
    \eta(x) := \hat{\eta} \left(\frac{\lvert x-s\rvert}{|s|/4}\right), \]
    for which it holds that 
    \[
    {\rm supp}D\eta = B_{s/4}(s) \setminus B_{s/8}(s), \; \eta(x) \neq 0 \implies \frac{3|s|}{4} < \lvert x \rvert < \frac{5|s|}{4}.
    \] 
    This allows us to rewrite $T_2$ as follows. We have, due to the PDE that $\hat G_0$ satisfies, 
    \begin{align*}
        Dv(s) &= \underbrace{2 \int_{\R^2 \setminus \Gamma_0}\left( \nabla(D_2 \hat G_0(x,s)\eta(x)) \cdot \nabla I v(x)\right) dx}_{:= T_{2,1}}\\ 
        &+ \underbrace{2 \int_{\R^2 \setminus \Gamma_0}\left( \nabla(D_2 \hat G_0(x,s)(1-\eta(x))) \cdot \nabla I v(x)\right) dx}_{:= T_{2,2}}, 
    \end{align*}
    where we use the symmetric interpolation operator $I^{\rm sym}$ introduced in Section~\ref{sec:interpol}, but abuse the notation by identifying $I \equiv I^{\rm sym}$ for brevity. 
    
    Let us focus on $T_{2,1}$. By adding and subtracting the same term, we obtain
    \begin{align*}
        T_{2,1} &= 2\sum_{i=1,2} \int_{\R^2 \setminus \Gamma_0} \partial_i(D_2 \hat G_0(x,s)\eta(x))\left(\partial_i Iv(x)\right)dx \\ 
        &= 2\sum_{i=1,2} \int_{\R^2 \setminus \Gamma_0} \partial_i(D_2 \hat G_0(x,s)\eta(x))\left(\partial_i Iv\left(s + \frac12 e_i\right)\right) dx \\ 
        &+ 2\sum_{i=1,2} \int_{\R^2 \setminus \Gamma_0} \partial_i (D_2 \hat G_0(x,s)\eta(x))\left(\partial_i Iv(x) - \partial_i Iv\left(s + \frac12 e_i\right)\right) dx \\
        &:= T_{2,1,1} + T_{2,1,2}.
    \end{align*}
    Note that $s + \tfrac 12 e_i$ is the midpoint of a lattice bond. We then apply integration by parts to $T_{2,1,1}$ and observe that 
    \begin{align*}
        T_{2,1,1} &= 2\sum_{i=1,2} \partial_i Iv\left(s + \frac12 e_i \right)\left(\int_{\R^2 \setminus \Gamma_0} \partial_i(D_2 \hat G_0(x,s) \eta(x)) dx\right)\\
        &+ 2\partial_1 Iv\left(s + \frac12 e_1 \right)\int_{\Gamma_0} D_2 \hat G_0(x,s)\eta(x) dx.
    \end{align*}

    Recalling the discussion in Section~\ref{sec:interpol} about the different regions associated with the interpolation operators, we note that by construction and since the support of $\eta$ is bounded away from $Q_0$, we have
    \begin{align*}
        T_{2,1,1} &= \sum_{b(m,\rho) \subset \Gamma}D_{\rho}v(s)\int_{\overline{Q_{m,\rho}}\cap \Gamma_0} D_2 \hat G_0(x,s) \eta(x)dx
    \end{align*}
    and similarly
    \begin{align*}
        T_{2,1,2} &= \sum_{m\in \La}\sum_{\rho \in \Rc(m)} (D_{\rho}v(m)-D_{\rho}v(s))\left(\int_{R_{m,\rho}} \nabla_{\rho}(D_2 \hat G_0(x,s)\eta(x))dx\right)
    \end{align*}
    
    The same reasoning applies to rewriting the other term in \eqref{eqn:d1d2Gbar0}, namely
    \begin{align*}
        T_1 = &= \sum_{m \in \La}\sum_{\rho \in \Rc(m)} - D_{\rho}(D_2 \hat{G}_0(m,s)\eta(m))D_{\rho} v(m)\\
        &+ \sum_{m \in \La}\sum_{\rho \in \Rc(m)} - D_{\rho}(D_2 \hat{G}_0(m,s)(1-\eta(m)))D_{\rho}v(m)\\
        &=: T_{1,1} + T_{1,2}. 
    \end{align*}
    Focusing for now on $T_{1,1}$, we note that
    \begin{align*}
        T_{1,1} &= \underbrace{\sum_{m \in \La}\sum_{\rho \in \Rc(m)} - D_{\rho}(D_2 \hat{G}_0(m,s)\eta(m))D_{\rho} v(s)}_{=: T_{1,1,1}}\\  &+ \underbrace{\sum_{m \in \La}\sum_{\rho \in \Rc(m)} - D_{\rho}(D_2 \hat{G}_0(m,s)\eta(m)) \left(D_{\rho} v(m)-D_{\rho} v(s) \right)}_{=:T_{1,1,2}}
    \end{align*}
    and further that it follows from summation by parts that  
    \begin{align*}
        T_{1,1,1} = \sum_{b(m,\rho) \subset \Gamma}-D_{\rho}v(s) D_2 \hat G_0(m,s)\eta(m).
    \end{align*}
    
    Recalling from \eqref{eqn:d1d2Gbar0} that
    \[
    D_1 D_2 \bar G_0(\ell,s) = T_1 + T_2,
    \]
    we sum together $T_{1,1,1}$ and $T_{2,1,1}$ to obtain
    \begin{align*}
        T_{1,1,1} + T_{2,1,1} = \sum_{b(m,\rho) \subset \Gamma}-D_{\rho}v(s)\left(D_2 \hat G_0(m,s) \eta(m) - \int_{\overline{Q_{m,\rho}}\cap \Gamma_0}D_2 \hat G_0(x,s) \eta(x) dx \right).
    \end{align*}
    Since $|\overline{Q_{m,\rho}} \cap \Gamma_0| = 1$, a first order quadrature estimate ensures that
    \begin{align*}
        |T_{1,1,1} + T_{2,1,1}| &\lesssim \sum_{b(m,\rho) \subset \Gamma}|Dv(s)|\left(|D_1 D_2 \hat G_0(m,s)| + |D_2 \hat G_0(m,s)||D\eta(m)|\right)\\
        &\lesssim |s||s|^{-2}\left(|s|^{-2} + |s|^{-1-1}\right) = |s|^{-3}.
    \end{align*}
    
    Similarly we can sum together $T_{1,1,2}$ and $T_{2,1,2}$ to obtain 
    \begin{align}\label{eqn:T112T212}
        T_{1,1,2} + T_{2,1,2} = \sum_{m \in \La}\sum_{\rho \in \Rc(m)}\Bigg( & \left( \int_{R_{m,\rho}} \nabla_{\rho}(D_2 \hat G_0(x,s)\eta(x))dx\right)\\  &- D_{\rho}(D_2 \hat{G}_0(m,s)\eta(m))\Bigg)\left(D_{\rho} v(m)-D_{\rho} v(s) \right)\nonumber
    \end{align}
    The mid-point quadrature estimate, together with the symmetrisation trick, as established in Lemma~\ref{lem-G_0-I-sym}, implies that
    \begin{align*}
    |T_{1,1,2} + T_{2,1,2}| &\lesssim \sum_{m \in \La}|\nabla^3(D_2 \hat G_0(m,s) \eta(m))||D_{\rho}v(m)-D_{\rho}v(s)|\\
    &\lesssim \sum_{m \in {\rm supp} \eta}|m-s||\nabla^3(D_2 \hat G_0(m,s) \eta(m))|\sup_{m \in {\rm supp}\eta}|D_1^2D_2G(m,l)|\\
    &\lesssim |s|^{-3} \sum_{m \in {\rm supp }\eta }|m-s||\nabla^3D_2 \hat G_0(m,s)| +  \underbrace{|s|^{-3} \sum_{m \in {\rm supp }\eta}|m-s||s|^{-3}}_{\lesssim |s|^{-3}}. 
    \end{align*}
    And we observe that
    \begin{align*}
    &|s|^{-3} \sum_{m \in {\rm supp }\eta }|m-s||\nabla^3D_2 \hat G_0(m,s)| \lesssim \\
    &\lesssim \lvert s \rvert^{-3+\delta} \sum_{m \in {\rm supp } \eta } |m-s| \big( (1+\lvert \omega_m \rvert^5 \lvert \omega_s \rvert^1 \lvert \omega_{m s}^{-} \rvert^2)^{-1} + (1+\lvert \omega_m \rvert^3 \lvert \omega_s \rvert^1 \lvert \omega_{m s}^{-} \rvert^4)^{-1} \big) \\
    &\lesssim \lvert s \rvert^{-3+\delta} \sum_{m \in {\rm supp } \eta, m \neq s } |m-s| \big( s^{-3} \lvert m-s \rvert^{-2} \lvert \omega_{m s}^{+} \rvert^{2}  + s^{-2}\lvert m-s \rvert^{-4}\lvert \omega_{m s}^{+} \rvert^{4}  \big)\\
    &\lesssim \lvert s \rvert^{-3+\delta}  \left(\lvert s\rvert^{-2 }\sum_{\substack{m \in {\rm supp } \eta,\\ m \neq s} } |m-s|^{-1} + \sum_{\substack{m \in {\rm supp} \eta,\\ m \neq s}} |m-s|^{-3}\right)\\
    &\lesssim \lvert s \rvert^{-3+\delta}  (\lvert s\rvert^{-2 } \lvert s \rvert + 1)\\
    &\lesssim \lvert s \rvert^{-3+\delta}
    \end{align*}
    
    Finally, we turn our attention to terms $T_{1,2}$ and $T_{2,2}$. Replicating the argument that allows to arrive at \eqref{eqn:T112T212}, we observe that
    \begin{align*}
        T_{1,2} + T_{2,2} = \sum_{m \in \La}\sum_{\rho \in \Rc(m)}\Bigg(& \left(\int_{R_{m,\rho}} \nabla_{\rho}(D_2 \hat G_0(x,s)(1-\eta(x)))dx\right)\\ &- D_{\rho}(D_2 \hat{G}_0(m,s)(1-\eta(m)))\Bigg)D_{\rho} v(m)
    \end{align*}
    and hence
    \begin{align*}
    |T_{1,2} + T_{2,2}| &\lesssim \sum_{m \in \La}|\nabla^3(D_2 \hat G_0(m,s)(1- \eta(m)))||D_{1}D_2G(m,l)|\\
     &\lesssim \sum_{m \in \La}\underbrace{|\nabla^3D_2 \hat G_0(m,s)||(1- \eta(m))||D_{1}D_2G(m,l)|}_{=:S(m)} + |s|^{-4+\delta},
    \end{align*}
    where we have used the fact that if at least one gradient falls onto $(1-\eta(m))$, every term scales fully with $|s|$, so we can take them out and add volume term to arrive at, for $j=1,2,3$, 
    \[
    \sum_{m \in \La}|\nabla^{3-j}D_2 \hat G_0(m,s)|\nabla^{j}(1- \eta(m))|||D_{1}D_2G(m,l)| \lesssim |s|^{-6+\delta} |{\rm supp} D\eta| \lesssim |s|^{-4+\delta}.
    \]
    To estimate the remaining term, we first note that 
    \[
    \sum_{m \in B_{|s|/8}(0)}S(m) \lesssim \sum_{m \in B_{|s|/8}(0)}(1+|\omega_m|^5|\omega_s||\omega_{ms}^-|^2)^{-1}(1+|\omega_m||\omega_\ell||\omega_{m\ell}^-|^{2-\delta})^{-1} \lesssim |s|^{-3+\delta}
    \]
    and 
    \[
    \sum_{m \in B_{|\ell|/16}(\ell)}S(m) \lesssim |s|^{-4}\left(1+ \int_{\varepsilon}^{|l|}r^{-1+\delta} dr \right) \lesssim |s|^{-4+\delta}.
    \]
    Outside of these two regions and with the region near $|s|$ excluded via $1-\eta$, the remaining sum can be estimated as
    \[
    \sum_{m \in \La \setminus B_{|s|/8}(0) \cup B_{|\ell|/16}(\ell)} S(m) \lesssim |s|^{-1}\int_{|s|}^{\infty} r^{(-10+\delta)/2}dr \lesssim |s|^{-4+\delta},
    \]
    which allows to conclude that
    \[
    |T_{1,2} + T_{2,2}| \lesssim |s|^{-3+\delta},
    \]
    thus ensuring that 
    \[
    |D_1 D_2 \bar{G}_0(l,s)|\lesssim |s|^{-3+\delta},
    \]
    which is what we set out to prove. 
    \end{proof}
    
    We need one more small Lemma before we actually go and improve beyond $\hat G_0$.
    \begin{lemma} \label{lem:DG_decay}
        For all $\ell, s \in \La$, the full lattice Green's function $G = \hat G_0 + \bar G_0$, defined in \eqref{eqn-G-old-decomp}, satisfies
        \[\lvert D_1 G(\ell,s)\rvert \lesssim (1+ \lvert \omega(\ell) \rvert \lvert\omega(\ell) - \omega(s) \rvert^{1-\delta})^{-1}. \]
    \end{lemma}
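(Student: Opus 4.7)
My plan is to view $s \mapsto D_1 G(\ell,s)$ as a discrete function of $s$ for fixed $\ell$, whose $s$-gradient $D_2 D_1 G(\ell,s)$ is already controlled by Theorem~\ref{thm-G-old}, and to recover the claimed first-order bound via telescoping along a ray to infinity, with the boundary term at infinity pinned to zero by the normalization of $\bar G_0$.

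First, I split $G = \hat G_0 + \bar G_0$. For the explicit predictor $\hat G_0$ of \eqref{eqn-hatG0-formula}, the chain rule for $F \circ \omega$ yields
\[
\lvert \nabla_m \hat G_0(m,s) \rvert \lesssim \lvert \omega(m) \rvert^{-1} \bigl( \lvert \omega(m) - \omega(s) \rvert^{-1} + \lvert \omega(m) - \omega^*(s) \rvert^{-1} \bigr),
\]
and a mean-value estimate on the bond $[\ell,\ell+\rho]$, which for $\rho \in \Rc(\ell)$ does not cross the crack, converts this directly into the claimed discrete bound, with the standard $\delta$-loss absorbing the $\omega$-geometry.

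For $\bar G_0$, fix $\rho \in \Rc(\ell)$ and set $f(s) := D_{1,\rho}\bar G_0(\ell,s)$. Its $s$-gradient equals $D_{1,\rho} D_2 G(\ell,s) - D_{1,\rho} D_2 \hat G_0(\ell,s)$ and hence, by Theorem~\ref{thm-G-old}, satisfies $\lvert D_2 f(s) \rvert \lesssim \lvert \omega(\ell) \rvert^{-1} \lvert \omega(s) \rvert^{-1} \lvert \omega(\ell)-\omega(s) \rvert^{-(2-\delta)}$. Choosing the outward radial path $s_t := s + t\, s/\lvert s \rvert$, both $\lvert \omega(s_t) \rvert$ and, for $t$ large, $\lvert \omega(\ell)-\omega(s_t) \rvert$ scale like $(\lvert s \rvert + t)^{1/2}$. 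The telescope identity
\[
f(s) = \lim_{t \to \infty} f(s_t) - \sum_{t \geq 0} D_{2, s/\lvert s \rvert} f(s_t),
\]
combined with a split of the sum into a near-$s$ regime (where $\lvert \omega(\ell)-\omega(s_t) \rvert \gtrsim \lvert \omega(\ell)-\omega(s) \rvert$) and a tail (where $\lvert \omega(\ell)-\omega(s_t) \rvert \gtrsim (\lvert s \rvert + t)^{1/2}$), then gives the bound $\lvert f(s) \rvert \lesssim \lvert \omega(\ell) \rvert^{-1} \lvert \omega(\ell)-\omega(s) \rvert^{-(1-\delta)}$. The boundary term at infinity vanishes because $\bar G_0(\cdot,s') \in \Hcc$ together with the symmetry $\bar G_0(m,s') = \bar G_0(s',m)$ enforces a consistent normalization under which $\bar G_0(m,s') \to 0$ as $\lvert s' \rvert \to \infty$, so $f(s_t) \to 0$.

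In the complementary near-field regime, where the right-hand side of the lemma is only $O(1)$, the result reduces to uniform boundedness of $\lvert D_1 G(\ell,s) \rvert$, which follows from the $\hat G_0$ estimate above and $\bar G_0 \in \Hcc$. The main obstacle is the identification of the vanishing boundary constant at infinity via symmetry and the careful path split of the telescope sum; both are routine given Theorem~\ref{thm-G-old} and the function-space tools of Section~\ref{sec:proofs_basics}.
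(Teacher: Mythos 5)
Your telescope argument is essentially the same device the paper uses (its Lemma~\ref{lem:decay_with_sobolev} is precisely a discrete Sobolev embedding followed by a telescope to infinity), and it is sound in the regime $2\lvert \ell \rvert \leq \lvert s \rvert$, where the outward radial path $s_t = s + t\,s/\lvert s \rvert$ keeps $\lvert \omega(\ell) - \omega(s_t) \rvert \gtrsim (\lvert s \rvert + t)^{1/2}$ uniformly. However, there is a genuine gap in your case split. You claim the complement of the telescope regime is the ``near-field'' regime where the right-hand side is $O(1)$, so that bare boundedness suffices. This dichotomy is false: for $\lvert \ell \rvert > \lvert s \rvert / 2$ the right-hand side can still be small. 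Concretely, take $s = (N,0)$ and $\ell = (4N,0)$ with $N$ large, so $\omega(\ell) = 2\omega(s)$ and the right-hand side is $\sim N^{-1+\delta/2}$, yet $\lvert \ell \rvert > \lvert s \rvert /2$. In exactly this configuration your radial path $s_t$ passes through $\ell$ at $t=3N$, so the telescope sum picks up the source singularity of $D_1 D_2 G$ near $m=\ell$ and the estimate collapses. The paper avoids this by handling $\lvert \ell \rvert > \lvert s \rvert/2$ separately, invoking the sharp $\bar G_0$ estimate from \cite[Lemma 4.8]{2018-antiplanecrack} for that regime, and reserving the telescope/Sobolev argument only for $2\lvert \ell \rvert \leq \lvert s \rvert$.

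A secondary concern: your identification of the boundary term at infinity (``$\bar G_0(\cdot, s') \in \Hcc$ together with symmetry enforces a consistent normalization under which $\bar G_0 \to 0$'') is vaguer than what is needed, since $\Hcc$ membership only determines a function up to an additive constant and symmetry alone does not pin down that constant uniformly in $s'$. The paper's proof uses the explicit fact that $D_1 \bar G_0(\cdot, s) \in \ell^2$ in the first variable (hence decays along sequences to infinity) together with the already-established two-sided estimate to force $v_\infty = 0$; some version of that argument is needed rather than an appeal to ``consistent normalization.''
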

    \begin{proof}
        First note that this holds true (even with $\delta =0$) for $\hat G_0$ instead of $G$ according to \cite[Lemma 4.4]{2018-antiplanecrack}.
    
        Furthermore, if $\lvert \ell \rvert > \frac{\lvert s \rvert}{2}$ we can apply \cite[Lemma 4.8]{2018-antiplanecrack} to get the same for $\bar G_0$ and thus conclude
        \[\lvert D_1 G(\ell,s) \rvert \lesssim (1+\lvert \omega_\ell \rvert \lvert \omega_{\ell s}^{-} \rvert)^{-1} \]
        there.
    
        We are left with the case $2\lvert \ell \rvert \leq \lvert s \rvert$. Consider $v(s) = D_1G(\ell,s)$ as a function in $s$. Then we know that
        $\lvert Dv(s) \rvert \lesssim (1+\lvert \omega(\ell) \rvert \lvert \omega(s) \rvert (\lvert \omega(\ell) - \omega(s)\rvert)^{2-\delta} )^{-1}$ according to \cite[Theorem 2.6]{2018-antiplanecrack}. That means $\lvert Dv(s) \rvert \lesssim \lvert \ell \rvert^{-1/2} \lvert s \rvert^{-3/2+\delta/2}$ for $\lvert s \rvert \geq 2\lvert \ell \rvert$.
    
        We can now apply Lemma \ref{lem:decay_with_sobolev} to see that $v$ has a limit $v_{\infty}= v_{\infty}(\ell)$ at infinity with
        \[\lvert v(s) -  v_{\infty} \rvert \lesssim \lvert \ell \rvert^{-1/2} \lvert s \rvert^{-1/2+\delta/2}.\]
        Note that this estimate was just based on a telescope sum to infinity and thus the constants do not depend on $\ell$. Also note that for $\lvert s \rvert \geq 2\lvert \ell \rvert$ we can estimate
        \[\lvert \ell \rvert^{-1/2} \lvert s \rvert^{-1/2+\delta/2} \leq \lesssim (1+ \lvert \omega(\ell) \rvert \omega(\ell) - \omega(s) \rvert^{1-\delta})^{-1}.\]
        Hence we showed that there is a $v_{\infty}(\ell)$ such that 
        \[\lvert D_1 G(\ell,s) - v_{\infty}(\ell) \rvert \lesssim (1+ \lvert \omega(\ell) \rvert \omega(\ell) - \omega(s) \rvert^{1-\delta})^{-1}. \]
        However, $v_\infty$ is just the limit $\lim_{\ell \to \infty} D_1 G(\ell,s)$. Clearly this is $0$ for $\hat G_0$, but we also have $D_1 \bar G_0 (\cdot,s) \in \ell^2$ by definition in \cite{2018-antiplanecrack}, hence $v_\infty =0$ overall. 
    \end{proof}
    \subsection{Proof of Proposition~\ref{prop-ghat1}}\label{sec:proof-prop-ghat1}
    \begin{proof}
    Using the ansatz \eqref{eqn-hat-G1_decomp}, we see that $\hat G_1^m$ has to satisfy the discrete PDE
    \[
    -{\rm Div}D \hat G^m_1(m) = {\rm Div}\omega_2(m).
    \]
    Remarkably, this is the same pointwise discrete PDE as for $\bar u_0$, when the pair-potential is quadratic, that is $\phi_{\rm quad}(r) = \frac12r^2$. Thus the existence of $\hat G_1^m$ follows directly from Theorem~\ref{thm:ubar0}, as well as the estimate
    \[
        |D \hat G^m_1(\ell)| \lesssim |\ell|^{-3/2 + \delta}.
    \]
    In particular, the quadratic nature of $\phi_{\rm quad}$ ensures that, unlike for a general nonconvex $\phi$ as in \cite{2018-antiplanecrack}, $\hat G_1^m$ does not need a small pre-factor in front.
    
    As $3/2-\delta >1$, we can then use Lemma~\ref{lem:decay_with_sobolev} to also get the estimate
    \[
        |\hat G^m_1(\ell)| \lesssim |\ell|^{-1/2 + \delta}.
    \]
    after shifting $G^m_1$ by a constant.
    \end{proof}
    \subsection{Estimates for $\bar G_1$}
    Recall that we work with a decomposition of the full lattice Green's function $G$ given by
    \begin{align*}
    G &= \hat G_0 + \bar G_0\\
    &=\hat G_0 + \hat G_1 + \bar G_1,
    \end{align*}
    and our aim is to estimate $|D_1 D_2 \bar G_1|$ in the vicinity of the crack tip. To this end, we want to leverage the technical results of on the improved decay of $\bar G_0$ from Section~\ref{sec:improved-barG0} and the properties of the discrete geometry predictor $\hat G_1$ from Proposition~\ref{prop-ghat1}.
    
    It will turn out that the following auxiliary result, which extends \cite[Lemma~4.10]{2018-antiplanecrack} for the case when $m \in \Gamma$,  will prove useful. 
    \begin{lemma}\label{lem:Hf-Gamma0}
        It holds that when $m \in \Gamma$ then
        \[
        |{\rm Div}D \omega_2 (m)| \lesssim |\nabla^4 \omega_2(m)|
        \]
        and, if $s \not = m$, 
        \[
        |{\rm Div}_1 D_1 D_2 \hat G_0(m,s)| \lesssim |\nabla^4_m \nabla_s \hat G_0(m,s)|.
        \]
    \end{lemma}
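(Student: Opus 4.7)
My plan is to prove both estimates by a shared Taylor-expansion argument anchored at the orthogonal projection $m^* := (m_1, 0) \in \Gamma_0$ of $m$ onto the crack line, which is the natural extension to $m \in \Gamma$ of the case $m \not\in \Gamma$ handled in \cite[Lemma~4.10]{2018-antiplanecrack}. The key inputs to be combined are (a) the harmonicity of $\omega_2$ (and of $D_{2\sigma}\hat G_0(\cdot, s)$ for each fixed $\sigma \in \Rc$ in Part 2) on $\R^2 \setminus \Gamma_0$, and (b) the Neumann boundary condition $\partial_2 \omega_2 = 0$ on $\Gamma_0$ (and likewise for $\hat G_0$). Differentiating the boundary condition tangentially and applying harmonicity iteratively then gives that every mixed derivative $\partial_1^j \partial_2^k(\cdot)$ with $k$ odd vanishes identically on $\Gamma_0 \setminus \{(0,0)\}$; in particular the coefficients $\partial_2(\cdot)$, $\partial_1 \partial_2(\cdot)$, $\partial_1^2 \partial_2(\cdot)$ and $\partial_2^3(\cdot)$ all vanish there.

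For Part 1, fix $m \in \Gamma_+$ (the case $m \in \Gamma_-$ follows by the $x_2$-reflection symmetry). A direct expansion of the cracked divergence, noting that the bond between $m$ and $m - e_2$ is absent, yields
\[
-{\rm Div}D\omega_2(m) = 6\omega_2(m) - 2\omega_2(m-e_1) - 2\omega_2(m+e_1) - 2\omega_2(m+e_2).
\]
Taylor expanding each of these four evaluations to third order about $m^*$ and inserting into the weighted sum, the $y_2$-odd Taylor terms vanish by the boundary relations above, the $y_1$-odd terms cancel by the $y_1 \leftrightarrow -y_1$ symmetry of the stencil $\{-e_1, 0, +e_1\}$, and the surviving combination reduces to $-2(\partial_1^2 + \partial_2^2)\omega_2(m^*) = -2\Delta \omega_2(m^*) = 0$ by harmonicity. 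The fourth-order remainder is $O(|\nabla^4 \omega_2(m^*)|)$, and since $|m^* - m| = 1/2$ with every stencil atom at distance $\gtrsim 1/2$ from the origin, this is equivalent to $|\nabla^4 \omega_2(m)|$ up to a universal constant.

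For Part 2, set $u(m) := D_{2\sigma}\hat G_0(m, s)$ for each fixed $\sigma \in \Rc$. Since $\hat G_0(\cdot, s)$ satisfies the continuum Laplace equation with Neumann boundary condition on $\Gamma_0$ away from $s$, so does $u$ away from $\{s, s+\sigma\}$. In the regime $|m - s| \gtrsim 1$ (the complementary near-diagonal regime being immediate as both sides of the target inequality are then $O(1)$), the Taylor argument above applies verbatim and gives $|{\rm Div}_1 D_1 u(m)| \lesssim |\nabla^4_m u(m)|$, after which the integral representation $D_{2\sigma}\hat G_0(m, s) = \int_0^1 \sigma \cdot \nabla_s \hat G_0(m, s + t\sigma)\, dt$ transfers the bound to $\sup_{t \in [0,1]}|\nabla^4_m \nabla_s \hat G_0(m, s + t\sigma)|$, which is comparable to $|\nabla^4_m \nabla_s \hat G_0(m, s)|$ in this regime. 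The most delicate step I anticipate is the treatment of the tip atom $m = (\pm 1/2, \pm 1/2)$, where $m^*$ sits at distance only $1/2$ from the singularity of $\omega$ at the origin; a direct check shows that the line segments joining $m^*$ to each of $m$, $m \pm e_1$ and $m + e_2$ stay at distance $\geq 1/\sqrt{20}$ from the origin, so the upper-trace extension of $\omega_2$ remains smooth along each segment and the Taylor remainder is uniformly comparable to $|\nabla^4 \omega_2(m)|$. The analogous care for $\hat G_0$ only affects the already-trivial near-diagonal regime and poses no further difficulty.
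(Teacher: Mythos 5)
Your proof is correct and follows essentially the same Taylor-expansion argument as the paper: expand about the projection $m^*=m\mp\tfrac12 e_2\in\Gamma_0$, use harmonicity and the Neumann condition $\partial_2 f=0$ on $\Gamma_0$, and conclude from the resulting vanishing of $\partial_2^3 f$ (and all odd-in-$\partial_2$ derivatives) on $\Gamma_0$ that only the $O(|\nabla^4 f|)$ remainder survives. The paper presents this as a two-step expansion (first around $m$, then relating derivatives at $m$ to those at $m_0=m^*$) and simply asserts $\partial^3_{222}f=0$ on $\Gamma_0$ as readily verifiable, whereas you expand directly about $m^*$ and make the tangential-differentiation-plus-harmonicity justification and the $D_{2\sigma}\mapsto\nabla_s$ transfer explicit; these are presentational differences only.
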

    \begin{proof}
        Let $f$ denote either $f = \omega_2$ or $f = D_2 \hat G_0(\cdot,s)$, which in particular implies that, for each $m \in \La$, $m \neq s$, we have $\Delta f(m) = 0$, and further $\nabla f \cdot \nu = 0$ on $\Gamma_0$.
        
        As established in \cite[Lemma~4.10]{2018-antiplanecrack}, if $m \in \pm \Gamma$, then
        \begin{align*}
            {\rm Div} Df(m) &= 2 \sum_{\rho \in \Rc(m)} D_{\rho}f(m) = \pm 2 \nabla f(m) \cdot e_2 + 2\Delta f(m) - \nabla^2 f(m)[e_2]^2 \pm \frac13 \nabla^3 f(m)[e_2]^3\\ &+ \mathcal{O}(|\nabla^4 f(m)|)
        \end{align*}
        We now define $m_0 := m \mp \frac12 e_2 \in \Gamma_0$, and Taylor-expand $f$ around $m_0$, namely
        \begin{align*}
        f(m) &= f(m_0) + \nabla f(m_0) \cdot (m-m_0) + \frac12 \nabla^2 f(m)[m-m_0]^2 + \frac16 \nabla^3 f(m_0)[m-m_0]^3 + \mathcal{O}(|\nabla^4 f(m)|)\\
        &= f\left(m\mp\frac12 e_2\right) \pm \frac12\partial_2 f\left(m\mp \frac12e_2\right) + \frac18 \partial^2_{22} f\left(m\mp \frac12 e_2\right) \pm \frac1{48} \partial^3_{222}f\left(m\mp \frac12\right)\\ &+ \mathcal{O}(|\nabla^4 f(m)|)
        \end{align*}
        Combining the two Taylor expansions and the PDE that $f$ satisfies, we arrive at
        \[
            -{\rm Div}Df(m) = -\frac1{12} \partial^3_{222} f\left(m + \frac12 e_2\right) + \mathcal{O}(|\nabla^4 f(m)|).
        \]
        The results thus follows from the readily verifiable fact that, in both cases, $\partial^3_{222}f(m_0) = 0$ for all $m_0 \in \Gamma_0$.
    \end{proof}

    We begin by considering the residual, given by $-{\rm Div_1}D_1D_2 \bar G_1$, where ${\rm Div}_1$ denotes the divergence operator being applied with respect to the first variable. However, it turns out that the correction $\hat G_1$ is only very good for small $m$. Because of that we introduce a cut off variant
    \begin{align}\label{eqn-def-mu}
        \hat G_{1,{\mu}}(m,s) := \hat G_1(m,s) \mu(m,s), \quad \mu (m,s) := \hat \eta\Big(\frac{\lvert m \rvert }{\lvert s \rvert^{1/2}}\Big),
    \end{align}
    where $\hat \eta$ is the scalar cut-off function from Definition~\ref{def-cutoff-fcts}, which in particular implies that $\hat G_{1,\mu}$ coinicides with $\hat G_1$ when $|m| \leq \frac12 |s|^{\frac12}$ and is identically zero for $|m| \geq |s|^{1/2}$. Note that, in principle, instead of $|s|^{1/2}$, one could choose a different cut-off radius, e.g. $|s|^{\alpha}$ and optimize over $\alpha$. It turns out that $\alpha = \frac12$ is optimal, see Remark~\ref{rem:exponent-choice} below for a sketch of the argument.
    
    With this choice we have the following residual estimate.
    
    \begin{lemma}\label{lem:HD2Gbar1cut}
    For $\lvert m \rvert \leq \frac{\lvert s \rvert}{2}$ we have
    \[
        \lvert {\rm Div}_1 D_1 D_2 \bar G_{1,{\mu}}(m,s) \rvert \lesssim \left\{\begin{array}{lr}
            \lvert m \rvert^{-1/2 + \delta}\lvert s \rvert^{-5/2}, & \text{for } \lvert m \rvert \leq \lvert s \rvert^{1/2},\\
            \lvert m \rvert^{-7/2}\lvert s \rvert^{-3/2}, & \text{for } \lvert m \rvert > \lvert s \rvert^{1/2}.
            \end{array}\right.
      \]
    \end{lemma}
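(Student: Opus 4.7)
The plan is to start by writing $\bar G_{1,\mu}(m,s) = G(m,s) - \hat G_0(m,s) - \hat G_{1,\mu}(m,s)$ and exploiting that, for $|m|\leq|s|/2$ and $|s|$ large, the defining PDE for $G$ gives ${\rm Div}_1 D_1 G(m,s) = -\delta(m,s) = 0$. Consequently
\[
{\rm Div}_1 D_1 D_2 \bar G_{1,\mu}(m,s) = -{\rm Div}_1 D_1 D_2 \hat G_0(m,s) - {\rm Div}_1 D_1 D_2 \hat G_{1,\mu}(m,s),
\]
and the task reduces to bounding these two discrete residuals. I split the analysis into the three geometrically natural regimes dictated by the cut-off $\mu$: the inner region $|m|\leq\tfrac12|s|^{1/2}$, where $\mu\equiv 1$ on the full stencil of $m$; the transition $\tfrac12|s|^{1/2}<|m|<|s|^{1/2}$, where $D\mu$ does not vanish; and the outer region $|m|\geq|s|^{1/2}$, where $\mu\equiv 0$ on the stencil.

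The outer and transition regimes are comparatively routine. When $|m|\geq|s|^{1/2}$ the cut-off term vanishes on the stencil, so it suffices to estimate $|{\rm Div}_1 D_1 D_2 \hat G_0(m,s)|$. For $m\in\Gamma$ this is controlled by Lemma~\ref{lem:Hf-Gamma0} and for interior $m$ by the standard fourth-order discrete-Laplacian Taylor argument, yielding in both cases a bound by $|\nabla^4_m\nabla_s\hat G_0(m,s)|$. The chain rule applied to $\hat G_0(m,s)=F(-\omega(m)+\omega(s))+F(-\omega(m)+\omega^*(s))$, together with $|\omega(m)-\omega(s)|\gtrsim|s|^{1/2}$ in this regime, then delivers a bound comfortably stronger than the claimed $|m|^{-7/2}|s|^{-3/2}$. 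In the transition region I would use the product rule for $\mu\,\hat G_1$ combined with $|D^k\mu|\lesssim|s|^{-k/2}$ and the decay bounds for $\hat G_1^m$ from Proposition~\ref{prop-ghat1} to interpolate between the two main cases.

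The bulk of the work lies in the inner region. Here $\hat G_{1,\mu}=\hat G_1$ throughout the stencil of $m$, so the defining discrete PDE gives ${\rm Div}_1 D_1\hat G_1(m,s) = \tfrac{2\omega_2(s)}{|s|}\,{\rm Div}D\omega_2(m)$ exactly, and the whole point of the construction of $\hat G_1$ (see the motivation following Definition~\ref{def-hatG1}) is that this quantity cancels the leading-order contribution of $-{\rm Div}_1 D_1\hat G_0(m,s)$ obtained by Taylor-expanding $\hat G_0$ in $\omega(m)$ around $\omega(s)$ and $\omega^*(s)$. I would therefore expand $\hat G_0$ one order beyond the linear term, identify the leading cancellation explicitly, and separately estimate the remaining pieces. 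Each of these takes the schematic form (a derivative of a $|s|^{-1/2}$-scale function in $s$)$\cdot$(a discrete combination of $\omega$-polynomials in $m$), so applying $D_2$ gains a further factor $|s|^{-1}$. The improved estimate of Lemma~\ref{lem:Hf-Gamma0} is crucial here, converting what are naively $O(1)$ values of ${\rm Div}D$ of $\omega$-polynomials on $\Gamma$ into the correct bulk decay; combining these with the $\hat G_1^m$-type bounds from Proposition~\ref{prop-ghat1} then produces the claimed $|m|^{-1/2+\delta}|s|^{-5/2}$ bound. The main technical obstacle is precisely this bookkeeping of cancellations near the crack tip, where $\omega$ is singular; the cut-off scale $|s|^{1/2}$ in the definition of $\mu$ is calibrated (see Remark~\ref{rem:exponent-choice}) so that the $\hat G_1$-correction and the $\hat G_0$-residual balance.
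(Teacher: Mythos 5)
Your overall architecture matches the paper's proof exactly: same starting identity $-{\rm Div}_1 D_1 D_2 \bar G_{1,\mu} = {\rm Div}_1 D_1 D_2 \hat G_0 + {\rm Div}_1 D_1 D_2 \hat G_{1,\mu}$, same Taylor expansion of $\hat G_0$ in $\omega(m)$ around $\omega(s)$ and $\omega^*(s)$, same case split by the support of $\mu$, same invocation of Lemma~\ref{lem:Hf-Gamma0} and the discrete product rule. But there is one decisive step in the inner region that your write-up does not secure, and ``separately estimate the remaining pieces'' will not paper over it.

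Expanding $\hat G_0$ to second order gives a linear term (which cancels with $\hat G_1$), a quadratic term $T_2 = \tfrac12(\nabla^2 F(\omega(s)) + \nabla^2 F(\omega^*(s)))[\omega(m)]^2$, and the cubic integral remainder $T_3$. The cubic remainder is estimated in the paper to give $|s|^{-5/2}|m|^{-1/2}$, the claimed bound. But the quadratic term, if merely \emph{estimated}, gives at best something of order $|s|^{-2}|m|^{-1/2}$ after two discrete derivatives in $m$ and one in $s$ — strictly worse than the target by a half power of $|s|$. The lemma only holds because $T_2$ vanishes \emph{identically}, not approximately: expanding $\omega\otimes\omega = \tfrac12\begin{pmatrix}|m|+m_1 & m_2 \\ m_2 & |m|-m_1\end{pmatrix}$, the coefficient of $|m|$ in $\nabla^2 F(\omega(s))[\omega(m)]^2$ is $\tfrac12 \operatorname{tr}\nabla^2 F(\omega(s)) = \tfrac12\Delta F(\omega(s)) = 0$ because $F$ is harmonic, so the entire quadratic contribution reduces (after adding the $\omega^*$ piece) to a multiple of $\tfrac{s_1 m_1}{|s|^2}$, which is affine in $m$ and annihilated by ${\rm Div}_1 D_1$, including on $\Gamma$. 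This exact cancellation — the paper's step $T_2 = -C\,{\rm Div}_1 D_1 D_2\tfrac{s_1 m_1}{|s|^2} = 0$ — is not an estimate and is not supplied by Lemma~\ref{lem:Hf-Gamma0}, which you invoke as the crucial inner-region ingredient; that lemma handles the remainder's boundary behaviour, not the quadratic term. Without explicitly observing and exploiting the harmonicity-driven vanishing of $T_2$, your plan does not close the required $|s|^{-5/2}$ bound.
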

    \begin{proof}
        The starting point of the proof is the identity
        \begin{equation}\label{eqn:HD2barG1mu}
        {\rm Div}_1 D_1 D_2 \bar G_{1,\mu}(m,s) = -{\rm Div}_1 D_1 D_2 \hat G_{0}(m,s) - {\rm Div}_1 D_1 D_2 \hat G_{1,\mu}(m,s). 
        \end{equation}
        We recall from \eqref{eqn-hatG0-formula} that, with $F(x) = -C \log \lvert x \rvert$ and $C=\frac{1}{4\pi}$, we have
        \[
        \hat G_0(m,s) = F(\omega(s)-\omega(m)) + F(\omega^\ast(s)-\omega(m)).
        \]
        Taylor expansions of $F$ around $\omega(s)$ and $\omega^\ast(s)$ result in
        \begin{subequations}\label{eqn-Ghat0-decomp}
        \begin{align}
            \hat G_0(m,s) &= -(\nabla F(\omega(s))+ \nabla F(\omega^\ast(s))\omega(m)\\
            &+ \frac{1}{2}(\nabla^2 F(\omega(s))+ \nabla^2 F(\omega^\ast(s))[\omega(m)]^2\\
            &- \frac{1}{2}\int_0^1 (1-t)^2 (\nabla^3 F(\omega(s)-t\omega(m)) + \nabla^3 F(\omega^\ast(s)-t\omega(m)))[\omega(m)]^3 \,dt
        \end{align}
        \end{subequations}
    
        Let's take the discrete difference and look at the different terms:
        \[
        -{\rm Div}_1 D_1D_2 \hat G_0(m,s) = T_1 + T_2 + T_3.
        \]
        First note that $T_1 = {\rm Div}_1 D_1 D_2 \hat G_1(m,s)$. However, we have to consider $\mu$. Let us get back to that and consider the other two terms first.   
        
        For $T_2$ we see that
        \[
        \nabla^2F(x)[v]^2 = -C\left(\frac{\lvert v \rvert^2}{\lvert x \rvert^2} -2 \frac{(x \cdot v)^2}{\lvert x \rvert^4}\right).
        \]
        Hence,
        \[
        T_2 = C {\rm Div}_1 D_1 D_2 \Big(\frac{\lvert m \rvert}{\lvert s \rvert} - \frac{(\omega(m) \cdot \omega(s))^2}{\lvert s \rvert^2} - \frac{(\omega^*(s) \cdot \omega(m))^2}{\lvert s \rvert^2}  \Big).
        \]
        Recalling that 
        \[
        \omega(m) = |m|^{1/2} (\cos(\theta_m/2), \sin(\theta_m/2)), \; \omega(s) = |s|^{1/2} (\cos(\theta_s/2), \sin(\theta_s/2)), \; \omega^*(s) = (-\omega_1(s),\omega_2(s)),
        \]
        and using the dot product formula $x \cdot v = |x||v|\cos \gamma$, where $\gamma$ is the angle between $x$ and $v$, we have
        \begin{align*}
            (\omega(m)\cdot \omega(s))^2&= \lvert m \rvert \lvert s \rvert (\cos(\theta_m/2)\cos(\theta_s/2)+ \sin(\theta_m/2)\sin(\theta_s/2))^2\\
            (\omega(m)\cdot \omega^\ast(s))^2&= \lvert m \rvert \lvert s \rvert (-\cos(\theta_m/2)\cos(\theta_s/2)+ \sin(\theta_m/2)\sin(\theta_s/2))^2.
        \end{align*}
        Expanding the squares and using standard trigonometric identities, we arrive at 
        \begin{align*}
            T_2 &= C {\rm Div}_1 D_1 D_2 \frac{\lvert m \rvert}{\lvert s \rvert}\Big( 1- 2 \cos^2(\theta_m/2)\cos^2(\theta_s/2) - 2 \sin^2(\theta_m/2)\sin^2(\theta_s/2) \Big)\\
            &= -C {\rm Div}_1D_1 D_2 \Big(\frac{\lvert m \rvert}{\lvert s \rvert}\cos(\theta_m)\cos(\theta_s)\Big)\\
            &= -C {\rm Div}_1 D_1 D_2 \frac{s_1 m_1}{\vert s \rvert^2}\\
            &=0,
        \end{align*}
        since $-{\rm Div}_1 D_1 m_1 =0$. Note that this remains true even if $m \in \Gamma$.
        
        That leaves $T_3$. With
        \begin{align}
        D_s &(\nabla^3 F(\omega(s)-t\omega(m)) + \nabla^3 F(\omega^\ast(s)-t\omega(m)))_\rho \nonumber \\
        &= \int_0^1 \nabla^4 F(\omega(s+\tau \rho)-t\omega(m))[\nabla \omega (s+\tau \rho)\rho] + \nabla^4 F(\omega^\ast (s+ \tau \rho)-t\omega(m))[\nabla \omega^\ast (s+\tau \rho)\rho]\,d\tau\nonumber \\
        &:= A(m,s,t)_\rho, \label{eq:GreensTaylorError}
        \end{align}
        we have to estimate
        \[T_3 = \frac{1}{2}\int_0^1 (1-t)^2  {\rm Div}_1 D_1 \Big( A(m,s,t) [\omega(m)]^3\Big)\,dt.\]
        For this, we first estimate \eqref{eq:GreensTaylorError} by itself and with discrete differences in $m$ as
        \begin{align*}
            \lvert A(m,s,t) \rvert &\lesssim \lvert s \rvert^{-4/2} \lvert s \rvert^{-1/2} = \lvert s \rvert^{-5/2}, \\
            \lvert D_1 A(m,s,t) \rvert &\lesssim \lvert s \rvert^{-5/2} \lvert s \rvert^{-1/2} \lvert m \rvert^{-1/2} = \lvert s \rvert^{-3}\lvert m \rvert^{-1/2}, \\
            \lvert D_1^2 A(m,s,t) \rvert &\lesssim \lvert s \rvert^{-6/2} \lvert s \rvert^{-1/2} \lvert m \rvert^{-2/2} = \lvert s \rvert^{-7/2}\lvert m \rvert^{-1}.
        \end{align*}
        Using a product rule for discrete difference, which holds true if one includes extra shifts, we thus find
        \begin{align*}
            \lvert T_3 \rvert &\lesssim \sup_t \Big\lvert{\rm Div}_1 D_1 \Big( A(m,s,t) [\omega(m)]^3 \Big\rvert \\
            &\lesssim \lvert m \rvert^{-1/2} \sup_t \lvert A \rvert + \lvert m \rvert^{1/2}  \sup_t \lvert D_1 A \rvert + \lvert m \rvert^{3/2} \sup_t \lvert D_1^2 A \rvert \\
            &\lesssim \lvert s \rvert^{-5/2} \lvert m \rvert^{-1/2} + \lvert s \rvert^{-3} + \lvert s \rvert^{-7/2} \lvert m \rvert^{1/2}\\
            &\lesssim \lvert s \rvert^{-5/2} \lvert m \rvert^{-1/2}.
        \end{align*}
        Note that this argument also works when $m \in \Gamma$, because in \eqref{eqn-Ghat0-decomp}, on the left-hand side we have $\hat G_0$, which satisfies $\nabla_{m_0} \hat G_0(m_0,s) \cdot e_2 = 0$ when $m_0 \in \Gamma_0$ and the same is true for the first two terms on the right-hand side and so the same must be true for the third term. As a result, as in Lemma~\ref{lem:Hf-Gamma0}, we also always have two derivatives to distribute even when $m \in \Gamma$.

        Returning to the cut-off function $\mu(m,s) = \hat\eta(\frac{m}{\lvert s \rvert^{1/2}})$, we consider three cases.\\
        When $\lvert m \rvert \leq \frac12 \lvert s \rvert^{1/2}$, then $\mu = 1$ and in this case $T_1$ cancels exactly with the $\hat G_1$ contribution and thus, from \eqref{eqn:HD2barG1mu} we obtain
        \[
        \lvert {\rm Div}_1 D_1 D_2 \bar{G}_{1,\mu}(m,s) \rvert \lesssim \lvert m \rvert^{-1/2} \lvert s \rvert^{-5/2},
        \]
        when $\lvert m \rvert \leq \frac12\lvert s \rvert^{1/2}$.
    
        The second case is when $\lvert m \rvert > \lvert s \rvert^{1/2}$, implying that $\mu =0$ and thus $\hat G_{1,\mu} = 0$. In this case \eqref{eqn:HD2barG1mu} becomes 
        \[
        {\rm Div}_1 D_1 D_2 \bar{G}_{1,\mu}(m,s) = -{\rm Div}_1 D_1D_2 \hat{G}_{0}(m,s).
        \]
        It follows from \cite[Lemma~4.10]{2018-antiplanecrack} and Lemma~\ref{lem:Hf-Gamma0} that, for any $m \in \Lambda$,
        \[
        |{\rm Div}_1 D_1D_2 \hat{G}_{0}(m,s)| \lesssim |\nabla^4_m\nabla_s \hat G_0(m,s)| \lesssim |s|^{-3/2}|m|^{-7/2}.
        \]

        That leaves the transition area $\frac{\lvert s \rvert^{1/2}}{2} \leq |m| \leq \lvert s \rvert^{1/2}$, in which we have to additionally consider $-{\rm Div}_1 D_1 D_2 (\mu \hat G_1^m \hat G_1^s)$. We use that, for $i=0,1$,
        \begin{equation}\label{eqn-needed-decay1}
            \lvert D^i \hat G_1^s(s) \rvert \lesssim \lvert s \rvert^{-1/2-i}, \quad \lvert D^i \hat G_1^m(m) \rvert \lesssim \lvert m \rvert^{-1/2-i+\delta},
        \end{equation}
        where the first inequality follows from a direct estimate of the explictly defined $\hat G_1^s$ and the second from Proposition~\ref{prop-ghat1}. We will also use that, due to Lemma~\ref{lem:Hf-Gamma0},
        \begin{equation}\label{eqn-needed-decay2}
        |{\rm Div}D \hat G^m_1(m)| = |{\rm Div}D \omega_2(m)| \lesssim |m|^{-7/2},
        \end{equation}
        and finally that, within this transition area, the cut-off function $\mu$ satisfies, for $i=0,1,2$, 
        \begin{equation}\label{eqn-needed-decay3}
        \lvert D_1^i \mu(m,s) \rvert \lesssim \lvert s \rvert^{-i/2}, \quad \lvert D_1^i D_2 \mu(m,s)\rvert \lesssim \lvert s \rvert^{-i/2-1}.
        \end{equation}
    
        To expand out $-{\rm Div}_1 D_1 D_{2\sigma}[\mu \hat G^m_1 \hat G^s_1](m,s)$ in full, we first recall that 
        \[
            D[uv](m) = \left\{(Du(m))_\rho v(m+\rho) + u(m)(Dv(m))_{\rho} \right\}_{\rho \in \Rc}
        \]
        and that the discrete divergence operator from \eqref{eqn-def-divergence} is given by
        \[
            -{\rm Div}\,g(m) = \sum_{\rho \in \Rc} g_{\rho}(m-\rho) - g_{\rho}(m).
        \]
       By adding and subtracting the same quantity, we observe that 
        \begin{align*}
            -&{\rm Div} D[uv](m) =\\
            &=\sum_{\rho} (Du(m-\rho))_{\rho} v(m) + u(m-\rho)(Dv(m-\rho))_{\rho} - (Du(m))_{\rho}v(m+\rho) - u(m)(Dv(m))_{\rho}\\
            &= \sum_\rho ((Du(m-\rho))_{\rho} - (Du(m))_{\rho})v(m) + (Du(m))_{\rho}v(m) - (Du(m))_{\rho}v(m+\rho)\\
            &\quad\quad\quad + u(m-\rho)(Dv(m-\rho))_{\rho} - u(m)(Dv(m-\rho))_{\rho}  - u(m)((Dv(m))_{\rho} - (Dv(m-\rho))_{\rho})\\
            &= -({\rm Div}Du(m))v(m) + u(m)(-{\rm Div}Dv(m))\\&\quad\quad\quad + \sum_{\rho} \big((Du(m))_{\rho}(Dv(m))_{\rho} + (Du(m-\rho))_{\rho}(Dv(m-\rho))_{\rho}\big).
        \end{align*}
        For notational clarity we now introduce $\sigma \in \Rc(s)$ and instead of  $D_2$ consider $D_{2 \sigma}$. This allows us to rewrite the original expression as 
        \begin{align*}
            -&{\rm Div}_1 D_1 D_{2\sigma}[\mu \hat G^m_1 \hat G^s_1](m,s) = \\ &= -{\rm Div}_1 D_1[\hat G^m_1(m)D_{2\sigma}\mu(m,s) \hat G^s_1(s+\sigma)] -{\rm Div}_1 D_1[\hat G^m_1(m)\mu(m,s)D_{2\sigma}\hat G^s_1(s)]\\
            &= -\hat G^s_1(s+\sigma){\rm Div}_1D_1[\hat G^m_1(m)D_{2 \sigma}\mu(m,s)] - D_{2\sigma}\hat G^s_1(s){\rm Div}_1 D_1[\hat G^m_1(m) \mu(m,s)]\\
            &= -\hat G^s_1(s+\sigma)({\rm Div} D \hat G^m_1(m))D_{2\sigma}\mu(m,s) - \hat G^s_1(s+\sigma)\hat G^m_1(m)({\rm Div}_1 D_1D_{2\sigma}\mu(m,s))\\
            &+ \hat G^s_1(s+\sigma)\sum_{\rho} \big( (D\hat G^m_1(m))_{\rho}(D_1 D_{2\sigma}\mu(m,s))_{\rho} + (D\hat G^m_1(m-\rho))_{\rho}(D_1D_{2\sigma}\mu(m-\rho,s))_{\rho}\big)\\
            &-D_{2\sigma}\hat G^s_1(s)({\rm Div}D\hat G^m_1(m))\mu(m,s) -D_{2\sigma}\hat G^s_1(s)\hat G^m_1(m)({\rm Div}_1D_1\mu(m,s))\\
            &+D_{2\sigma}\hat G^s_1(s)\sum_{\rho}\big( (D\hat G^m_1(m))_\rho(D_1 \mu(m,s))_\rho + (D\hat G^m_1(m-\rho))_\rho(D_1 \mu(m-\rho,s))_\rho\big),
        \end{align*}
        which, crucially, establishes that when two derivatives fall on $\hat G^m_1$, we get $-{\rm Div}D \hat G^m_1$, thus ensuring that the estimate \eqref{eqn-needed-decay2} applies. Combining this with estimates \eqref{eqn-needed-decay1} and \eqref{eqn-needed-decay3}, we thus arrive at 
        \begin{align*}
            |{\rm Div}_1 D_1D_2[\mu \hat G^m_1 \hat G^s_1](m,s)| &\lesssim |s|^{-1/2}|m|^{-7/2}|s|^{-1} + |s|^{-1/2}|m|^{-1/2+\delta}|s|^{-2} + |s|^{-1/2}|m|^{-3/2 + \delta} |s|^{-3/2}\\
            &+|s|^{-3/2}|m|^{-7/2} + |s|^{-3/2} |m|^{-1/2 + \delta} |s|^{-1} + |s|^{-3/2}|m|^{-3/2+\delta} |s|^{-1/2}\\
            &\lesssim |s|^{-5/2}|m|^{-1/2+\delta},
        \end{align*}
        as $\lvert m \rvert$ is comparable to $\lvert s \rvert^{1/2}$.

        \end{proof}
        \begin{remark}\label{rem:exponent-choice}
        The radius $r(s)=\lvert s \rvert^{1/2}$ in the context of the last proof is optimal in terms of balancing the inner and the transition area errors. Note that there always is a third term for $\lvert m\rvert > r(s)$, but we will see that it behaves better and can be discarded.
    
        To see that the exponent $1/2$ is optimal let us look at the above error terms but with $r(s) = \lvert s \rvert^\alpha$, where $\alpha \in (0,1)$. Let us however simplify slightly and only focus on the critical part
        \begin{equation}\label{eqn-critical-sum}
            \sum_{\lvert m \rvert \leq \lvert s \rvert/4} \lvert m \rvert^{-1/2} \lvert {\rm Div}_1 D_1D_2 \bar{G}_{1,\mu}(m,s) \rvert.
        \end{equation}
        This is indeed somewhat of a simplification -- see \eqref{eq:sumforfulloptimization} for the full sum. However this choice is enough to understand the choice of $\alpha$ and the full case behaves the same.
    
        We first note that for the  cut-off $r(s) = |s|^{\alpha}$ we still have 
        \[
        \lvert {\rm Div}_1 D_1 D_2 \bar G_{1,{\mu}}(m,s) \rvert \lesssim \left\{\begin{array}{lr}
            \lvert m \rvert^{-1/2 + \delta}\lvert s \rvert^{-5/2}, & \text{for } \lvert m \rvert \leq \frac12 \lvert s \rvert^{\alpha},\\
            \lvert m \rvert^{-7/2}\lvert s \rvert^{-3/2}, & \text{for } \lvert m \rvert > \lvert s \rvert^{\alpha}.
            \end{array}\right.
      \]
        A lengthy calculation similar to that presented in Lemma~\ref{lem:HD2Gbar1cut}, reveals that in the transition area $\frac12 |s|^{\alpha} \leq |m| \leq |s|^{\alpha}$, we have
        \[
            \lvert {\rm Div}_1 D_1 D_2 \bar G_{1,{\mu}}(m,s) \rvert \lesssim |s|^{-3/2}|m|^{-7/2} + |s|^{-3/2 -\alpha}|m|^{-3/2 + \delta} + |s|^{-3/2-2\alpha}|m|^{-1/2+\delta}.
        \]
        Separating the sum in \eqref{eqn-critical-sum} based on $\mu$ we find
        \begin{align*}
            \sum_{\lvert m \rvert \leq \lvert s \rvert/4}& \lvert m \rvert^{-1/2} \lvert {\rm Div}_1D_1D_2 \bar{G}_{1,{\mu}}(m,s) \rvert \lesssim \\ 
            &\lesssim \lvert s \rvert^{-5/2} \sum_{\lvert m \rvert \leq \frac12|s|^{\alpha}} \lvert m \rvert^{-1}\\
            &+\sum_{\frac12 |s|^{\alpha} \leq |m| \leq |s|^{\alpha}}\left( |s|^{-3/2}|m|^{-7/2} + |s|^{-3/2 -\alpha}|m|^{-3/2 + \delta} + |s|^{-3/2-2\alpha}|m|^{-1/2+\delta}\right)\\
            &+ \lvert s \rvert^{-3/2} \sum_{\lvert m \rvert \geq |s|^{\alpha}} \lvert m \rvert^{-4} \\
            &\lesssim \lvert s \rvert^{-5/2 +\alpha} + \lvert s \rvert^{-3/2 -\alpha+\alpha\delta} + \lvert s \rvert^{-3/2 - 2 \alpha} \lesssim \lvert s \rvert^{-5/2 +\alpha} + \lvert s \rvert^{-3/2 -\alpha+\alpha\delta}
        \end{align*}
        Now we optimize in $\alpha$, further simplifying by ignoring $\delta$. We obtain $\alpha = 1/2$ and
        \begin{align*}
            \sum_{\lvert m \rvert \leq \lvert s \rvert/4} \lvert m \rvert^{-1/2} \lvert HD_2 \bar{G}_{1,\mu} \rvert &\lesssim \lvert s \rvert^{-2+\delta/2}.
        \end{align*}
        Note that this optimization is a balance between the inner term where $\mu=1$ and the transition area where $0 < \mu <1$. The outer term always behaves better.
    
    \end{remark}
    
    \subsubsection{Proof of Theorem~\ref{thm-Gbar1-decay}}\label{sec:proof-thm-Gbar1-decay}
    \begin{proof}[Proof of Theorem \ref{thm-Gbar1-decay}]
    Recalling Definition~\ref{def-cutoff-fcts}, take $\eta(m) := \hat \eta \left(\frac{\lvert m \rvert}{c\lvert s \rvert}\right)$ with $c=\frac{1}{4}$ and further let $\lvert \ell \rvert \leq \lvert s \rvert/16$ and set $v(m):= D_\ell G(m,\ell)$. Then, for any $A(s)$ depending only on $s$, we have
    \begin{align*}
        D_\ell D_s \bar{G}_{1,\mu}(\ell,s) &= D_\ell \Big( \eta(\ell)(D_s \bar{G}_{1,\mu}(\ell,s) - A(s)) \Big)\\
        &= \sum_{m\in \La} D_m \Big(  \eta(m)(D_s \bar{G}_{1,\mu}(m,s) - A(s)) \Big) \cdot Dv(m) \\
        &= \sum_{m\in\La} D_m(D_s\bar G_{1,\mu}(m,s) - A(s)) \cdot D(\eta(m)v(m))\\
        &+ \sum_{m\in\La}\left(D\eta(m) \cdot D_m(D_s \bar G_0(m,s) - A(s))\right)v(m)\\
        &+ \sum_{m \in \La}\left(D\eta(m) \cdot Dv(m)\right)\left(D_s \bar G_0(m,s) - A(s)\right)\\
        &=: T_1 + T_2 + T_3,
    \end{align*}
    where the third equality follows from Lemma~\ref{lem:pushing_eta} and we used that $\bar G_{1,\mu} = \bar G_0$ for $\lvert m \rvert >\lvert s \rvert^{1/2}$. This is fine since 
    \[
    {\rm supp} D\eta \subset B_{|s|/4}(0) \setminus B_{|s|/8}(0) =: \mathcal{A}_s
    \] 
    and we are interested in the regime where $|s|$ is large enough. 
    
    We begin by estimating $T_2$ and $T_3$, which are the boundary terms, since $\mathcal{A}_s$ is an annulus scaling with $|s|$, ensuring that $\tfrac{|s|}{8} > |s|^{1/2}$.
    
    We first notice that thus for $m \in {\rm supp} D\eta$ implies that $|m-\ell| \geq \tfrac{|s|}{16} \geq |\ell|$ and thus 
    \[
    m \in \mathcal A_s \implies |D\eta(m)| \lesssim |s|^{-1}, \quad |D_m D_s \bar G_0(m,s)| \lesssim |s|^{-3+\delta}.
    \]
    Starting with $T_2$, we also note that $D_m A(s) =0$ and that we have
    \[|v(m)| \lesssim \lvert \ell \rvert^{-1/2} \lvert s \rvert^{-1/2+\delta/2}\]
    according to Lemma \ref{lem:DG_decay}. We can thus estimate
    \begin{align*}
        |T_2| &\lesssim \sum_{m \in \mathcal A_s} |s|^{-1} |s|^{-3 + \delta} \lvert \ell \rvert^{-1/2} \lvert s \rvert^{-1/2+\delta/2}\\
        &\lesssim \lvert \ell \rvert^{-1/2} \lvert s \rvert^{2-1-3+\delta-1/2+\delta/2}\\
        &=\lvert \ell \rvert^{-1/2} \lvert s \rvert^{-5/2+3 \delta/2}.
    \end{align*}
    
    For $T_3$, given by 
    \[
    T_3 = \sum_{m \in \La}\left(D\eta(m) \cdot Dv(m)\right)\left(D_s \bar G_0(m,s) - A(s)\right),
    \]
    we use a Cauchy-Schwarz inequality to obtain
    \begin{align*}
        |T_3| &\leq \left(\sum_{m \in \La} |D\eta(m) \cdot Dv(m)|^2\right)^{1/2}\|D_s\bar G_0(\cdot,s)-A(s)\|_{\ell^2(\mathcal A_s)}\\
        &\lesssim (|s|^2|s|^{-2}|s|^{-3+2\delta} |\ell|^{-1})^{1/2}|s|\|D_mD_s\bar G_0(\cdot,s)\|_{\ell^2(\mathcal A_s)} = |s|^{-1/2+\delta}|\ell|^{-1/2}\|D_mD_s\bar G_0(\cdot,s)\|_{\ell^2(\mathcal A_s)},
    \end{align*}
    where we used the Poincare inequality from Lemma \ref{lem:discretePoincare} with $A(s) = (D_s\bar G_0(\cdot,s))_{\mathcal A_s}$. We suppressed the slight increase in annulus width in the notation here as it does not change the estimate.
    And, since
    \[
    \|D_mD_s\bar G_0(\cdot,s)\|_{\ell^2(\mathcal A_s)} \lesssim |s|^{(-6 + 2\delta + 2)/2} = |s|^{-2 + \delta},
    \]
    we obtain
    \[
    |T_3| \lesssim |s|^{-5/2+2\delta}|\ell|^{-1/2}.
    \]
    
    It thus remains to estimate the near-crack-tip term $T_1$. We begin by using $D_m A(s) =0$ and integrating by parts to obtain
    \begin{align}
    T_1 &= \sum_{m\in\La} D_m(D_s\bar G_{1,\mu}(m,s) - A(s)) \cdot D(\eta(m)v(m)) \nonumber\\
    &= \sum_{m\in\La} -{\rm Div}_m D_m D_s\bar G_{1,\mu}(m,s)) (\eta(m)D_\ell G(m,\ell)). \label{eq:sumforfulloptimization},
    \end{align}
    which sets the scene for applying Lemma~\ref{lem:HD2Gbar1cut}. Indeed, to estimate the sum we split the cases $\lvert m \rvert \leq \lvert s \rvert^{1/2}$ and $\lvert m \rvert > \lvert s \rvert^{1/2}$, denoted by $T_1 = T_{1,1} + T_{1,2}$. Also note that $m$ can be close to $\ell$, so care must be taken there as well.
    
    First,
    \begin{align*}
    \lvert T_{1,1} \rvert &=
       \Big\lvert \sum_{\substack{m \in \La \\ \lvert m \rvert \leq \lvert s \rvert^{1/2}}} {\rm Div}_m D_m  D_s\bar G_{1,\mu}(m,s)) (\eta(m)D_\ell G(m,\ell)) \Big\rvert \\
       &\lesssim \sum_{\substack{m \in \La \\ \lvert m \rvert \leq \lvert s \rvert^{1/2}}} \lvert m \rvert^{-1/2+\delta} \lvert s \rvert^{-5/2} \lvert \ell \rvert^{-1/2} \lvert \omega(\ell)-\omega(m)\rvert^{-1+\delta}.
    \end{align*}
    If $\lvert \ell \rvert \geq 2 \lvert s \rvert^{1/2}$, then $\lvert \omega(\ell)-\omega(m)\rvert \sim \lvert \ell \rvert^{1/2}$ and we find
    \[\lvert T_{1,1} \rvert \lesssim \lvert \ell \rvert^{-1 + \delta/2}
    \lvert s \rvert^{-7/4+\delta/2} \lesssim \lvert \ell \rvert^{-1/2}
    \lvert s \rvert^{-2+\delta}\]
    Otherwise we have $\lvert \ell \rvert < 2 \lvert s \rvert^{1/2}$ but we have to compare $m$ and $\ell$ in more detail. To do so, let us disjointedly split
    \[
    \La = \{m \in \La \mid 4\lvert m \rvert \leq \lvert \ell \rvert\} \cup \{m \in \La \mid  4\lvert \omega(m) - \omega(\ell) \rvert \leq \lvert \ell \rvert^{1/2}\} \cup S_3 =: S_1 \cup S_2 \cup S_3.\]
    It is readily verifiable that, by construction, for $m \in S_1 \cup S_3$, we have $\lvert \omega(\ell)-\omega(m)\rvert \sim \lvert \ell \rvert^{1/2}$.
    
    The situation when $m \in S_2$ is a bit more delicate as $S_2$ is the pre-image of a ball under $\omega$ and does not cross the crack. As
    \[  \omega(\ell)+\omega(m) = 2 \omega(\ell) -(\omega(\ell) - \omega(m)),\]
    with $\lvert \omega(\ell)\rvert = \lvert \ell \rvert^{1/2}$ and $\lvert \omega(\ell)-\omega(m)\rvert \leq \lvert \ell \rvert^{1/2}/4$, we find
    \[\frac{7}{4}\lvert \ell \rvert^{1/2} \leq \lvert \omega(\ell)+\omega(m) \rvert \leq \frac{9}{4} \lvert \ell \rvert^{1/2}. \]
    In particular, we obtain
    \[\lvert \omega(\ell)-\omega(m)\rvert = \frac{\lvert \ell - m\rvert}{\lvert \omega(\ell)+\omega(m)\rvert} \sim \frac{\lvert \ell-m \rvert}{\lvert \ell \rvert^{1/2}}.\]
    While $S_2$ is not a ball itself, it is contained in one, with a radius nicely scaling with $\ell$:
    \[\lvert \ell - m \rvert = \lvert \omega(\ell)-\omega(m)\rvert\lvert \omega(\ell)+\omega(m)\rvert \leq \frac{9}{16} \lvert \ell \rvert.\]
    Finally, for notational convenience we introduce the ball $B := B_{|s|^{\frac12}}(0)$.
    
    Thus prepared, we estimate $T_{1,1}$ as follows.
    \begin{align*}
    \lvert T_{1,1} \rvert &\lesssim \sum_{m \in B} \lvert m \rvert^{-1/2+\delta} \lvert s \rvert^{-5/2} \lvert \ell \rvert^{-1/2} \lvert \omega(\ell)-\omega(m)\rvert^{-1+\delta}\\
    &\lesssim \sum_{m \in S_1 \cap B} \lvert m \rvert^{-1/2+\delta} \lvert s \rvert^{-5/2}\lvert \ell \rvert^{-1+\delta/2}\\
    &+ \sum_{m \in S_2 \cap B} \lvert m \rvert^{-1/2+\delta} \lvert s \rvert^{-5/2}\lvert \ell \rvert^{-\delta/2} \lvert \ell - m \rvert^{-1+\delta}\\
    &+ \sum_{m \in S_3 \cap B} \lvert m \rvert^{-1+2\delta} \lvert s \rvert^{-5/2}\lvert \ell \rvert^{-1/2}\\
    &\lesssim  \lvert s \rvert^{-5/2}\lvert \ell \rvert^{1/2+3\delta/2}\\
    &+ \sum_{\substack{m \in B\\\lvert m - \ell \rvert \leq \frac{9}{16}\lvert \ell \rvert}} \lvert \ell \rvert^{-1/2+\delta/2} \lvert s \rvert^{-5/2} \lvert \ell - m \rvert^{-1+\delta}\\
    &+ \sum_{m \in B} \lvert m \rvert^{-1+2\delta} \lvert s \rvert^{-5/2}\lvert \ell \rvert^{-1/2}\\
    &\lesssim  \lvert s \rvert^{-5/2}\lvert \ell \rvert^{1/2+3\delta/2} + \lvert s \rvert^{-5/2} \lvert \ell \rvert^{1/2 + 3\delta/2}+ \lvert s \rvert^{-2+\delta} \lvert \ell \rvert^{-1/2}\\
    &\lesssim \lvert s \rvert^{-2+\delta} \lvert \ell \rvert^{-1/2}
    \end{align*}
    
    The sum outside the squareroot region, $T_{1,2}$, can be estimated in the same way. We actually get slightly better estimates here though we make no use of that directly. We start with 
    \begin{align*}
       \lvert T_{1,2} \rvert &= \Big\lvert \sum_{\substack{ m \in \La \\ \lvert m \rvert > \lvert s \rvert^{1/2}}} {\rm Div}_1D_1 D_s\bar G_{1,\mu}(m,s)) (\eta(m)D_\ell G(m,\ell)) \Big\rvert \\
       &\lesssim \sum_{m \in B_2} \lvert m \rvert^{-7/2} \lvert s \rvert^{-3/2} \lvert \ell \rvert^{-1/2} \lvert \omega(\ell)-\omega(m)\rvert^{-1+\delta},
    \end{align*}
    where $B_2 := B_{\frac{|s|}{4}}(0) \setminus B_{|s|^{\frac12}}(0)$, which follows from the support of the cut-off function $\eta$.
    
    Again, we can first consider the case where $\ell$ is bounded away from the region, $2\lvert \ell \rvert \leq \lvert s \rvert^{1/2}$. In that case
    $\lvert \omega(\ell)-\omega(m)\rvert \sim \lvert m \rvert^{1/2}$ and we find
    \begin{align*}
       \lvert T_{1,2} \rvert &\lesssim \sum_{m: \lvert s \rvert^{1/2} < \lvert m \rvert \leq \lvert s \rvert} \lvert m \rvert^{-7/2} \lvert s \rvert^{-3/2} \lvert \ell \rvert^{-1/2} \lvert \omega(\ell)-\omega(m)\rvert^{-1+\delta}\\
       &\lesssim \sum_{m: \lvert s \rvert^{1/2} < \lvert m \rvert} \lvert m \rvert^{-4+\delta/2} \lvert s \rvert^{-3/2} \lvert \ell \rvert^{-1/2}\\
       &\lesssim \lvert s \rvert^{-5/2+\delta/4} \lvert \ell \rvert^{-1/2}\\
       &\lesssim \lvert s \rvert^{-2+\delta} \lvert \ell \rvert^{-1/2}.
    \end{align*}
    Otherwise we have $2\lvert \ell \rvert > \lvert s \rvert^{1/2}$ and again, need to take more care with comparing $m$ and $\ell$.
    
    We again split $\La = S_1 \cup S_2 \cup S_3$ and estimate
    \begin{align*}
       \lvert T_{1,2} \rvert &\lesssim \sum_{m \in B_2} \lvert m \rvert^{-7/2} \lvert s \rvert^{-3/2} \lvert \ell \rvert^{-1/2} \lvert \omega(\ell)-\omega(m)\rvert^{-1+\delta}\\
       &\lesssim \sum_{m \in S_1 \cap B_2} \lvert m \rvert^{-7/2} \lvert s \rvert^{-3/2} \lvert \ell \rvert^{-1 + \delta/2}\\
       &+ \sum_{m \in S_2\cap B_2} \lvert m \rvert^{-7/2} \lvert s \rvert^{-3/2} \lvert \ell \rvert^{-\delta/2} \lvert \ell - m\rvert^{-1+\delta}\\
       &+ \sum_{m \in S_3 \cap B_2} \lvert m \rvert^{-4+\delta} \lvert s \rvert^{-3/2} \lvert \ell \rvert^{-1/2}\\
       &\lesssim \lvert s \rvert^{-9/4} \lvert \ell \rvert^{-1 + \delta/2}
       + \sum_{\substack{m \in B_2\\ \lvert m-\ell \rvert \leq 3 \lvert \ell \rvert/8}} \lvert \ell \rvert^{-7/2-\delta/2} \lvert s \rvert^{-3/2} \lvert \ell - m\rvert^{-1+\delta}
       + \sum_{\substack{m \in B_2\\ \lvert \ell \rvert/4 < \lvert m \rvert }} \lvert m \rvert^{-4+\delta} \lvert s \rvert^{-3/2} \lvert \ell \rvert^{-1/2}\\
       &\lesssim \lvert s \rvert^{-9/4} \lvert \ell \rvert^{-1 + \delta/2}+ \lvert \ell \rvert^{-5/2+\delta/2} \lvert s \rvert^{-3/2} + \lvert \ell \rvert^{-5/2+\delta} \lvert s \rvert^{-3/2}\\
       &\lesssim \lvert s \rvert^{-9/4} \lvert \ell \rvert^{-1 + \delta/2}+ \lvert \ell \rvert^{-3/2+\delta} \lvert s \rvert^{-2}
       \lesssim \lvert s \rvert^{-2+\delta} \lvert \ell \rvert^{-1/2}.
    \end{align*}
    That concludes all cases for both $T_{1,1}$ and $T_{1,2}$. Overall we have shown
    \[\lvert T_1 \rvert \lesssim \lvert s \rvert^{-2+\delta} \lvert \ell \rvert^{-1/2}, \]
    which is what we set out to prove. 
    \end{proof}
    
    %%%%%%%%%%%%%%%%%%%%%%%%%%%%%%%%%%%%%%%%%%%%%%%%%%%%%%%%%%%%
    %%%%%%%%%%%%%%%%%%%%  NEW SECTION   %%%%%%%%%%%%%%%%%%%%%%%%
    %%%%%%%%%%%%%%%%%%%%%%%%%%%%%%%%%%%%%%%%%%%%%%%%%%%%%%%%%%%%
    \section{Proofs: atomistic model}\label{sec:proofs-atom-model}
    \subsection{Higher order predictors}\label{sec:proofs-predictors}
    We begin by proving an auxiliary result removing the arbitrarily small $\delta >0$ from the decay estimate for $\bar u_0$ established in \cite{2018-antiplanecrack}.
    \begin{lemma}\label{ubar0-decay}
    It holds that
    \[
    |D\bar u_0(s)| \lesssim |s|^{-3/2}\log|s|.
    \]
    \end{lemma}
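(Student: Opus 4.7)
The plan is to represent $D\bar u_0$ through the lattice Green's function $G$ from Theorem~\ref{thm-G-old} and to carefully track the contribution of each part of this representation. Starting from the Euler--Lagrange equation $\sum_m \nabla V(D\hat u_0(m)+D\bar u_0(m))\cdot Dv(m)=0$ for all $v\in\Hc$, expand $\nabla V$ at the origin; because of the anti-plane mirror symmetry of $\phi$ the odd Taylor coefficients vanish, and together with $\nabla^2 V(0)=\mathrm{Id}$ this gives a decomposition $\nabla V(g)=g+N(g)$ with $|N(g)|\lesssim |g|^3$. Combined with Lemma~\ref{lem:symmetric_I_decay}, which rewrites the linear mismatch coming from $\hat u_0$ using the symmetrised interpolation residual $\hat h_0^{\rm sym}$ satisfying $|\hat h_0^{\rm sym}(m)|\lesssim |m|^{-5/2}$, the Euler--Lagrange equation becomes
\[
\sum_{m}D\bar u_0(m)\cdot Dv(m) = -\sum_m \hat h_0^{\rm sym}(m)\cdot Dv(m)-\sum_m N\bigl(D\hat u_0(m)+D\bar u_0(m)\bigr)\cdot Dv(m).
\]

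Next, test with a suitably truncated version of $G(\cdot,s)$, sum by parts, apply a discrete $s$-derivative, and pass to the limit to obtain
\[
D\bar u_0(s) = -\sum_m \hat h_0^{\rm sym}(m)\cdot D_m D_s G(m,s)-\sum_m N\bigl(D\hat u_0(m)+D\bar u_0(m)\bigr)\cdot D_m D_s G(m,s).
\]
Using $|D\hat u_0(m)|\lesssim |m|^{-1/2}$ and the a priori bound $|D\bar u_0(m)|\lesssim |m|^{-3/2+\delta}$ from Theorem~\ref{thm:ubar0}, the total forcing obeys $|\hat h_0^{\rm sym}(m)|+|N(D\hat u_0(m)+D\bar u_0(m))|\lesssim |m|^{-3/2}$, where the $|m|^{-3/2}$ scaling is saturated only by the cubic term $(D\hat u_0)^3$, while every other contribution decays strictly faster. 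To estimate the sums, split the lattice into the three regions $|m|\leq |s|/2$, $|m-s|\leq |s|/2$, and $|m|\geq 2|s|$, and decompose $G=\hat G_0+\bar G_0$.

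For the $\hat G_0$ piece the decay bound \eqref{eqn-d1d2G-old-decay} holds with $\delta=0$, and a direct case-by-case summation yields exactly $|s|^{-3/2}\log|s|$, where the logarithm originates from the marginal two-dimensional sum $\sum_{|m|\leq |s|/2}|m|^{-2}$. For the $\bar G_0$ piece the $\delta$-loss inherited from \cite{2018-antiplanecrack} is absorbed using the refined annulus estimates developed in Section~\ref{sec:improved-barG0}, in particular Lemma~\ref{lem:Gbaroldannulus}, which gives the sharper decay $|D_1 D_2 \bar G_0(\ell,s)|\lesssim |s|^{-3+\delta}$ in the crucial transition annulus $|m|\sim |s|/2$ where the generic bound is not strong enough. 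Summing the resulting regional contributions produces a term strictly subdominant to $|s|^{-3/2}\log|s|$, and therefore the $\hat G_0$ contribution controls the full estimate.

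The main technical obstacle is the near-$s$ region, where the singularity of $D_m D_s G$ meets the slow $|m|^{-3/2}$ decay of the cubic forcing and produces precisely the logarithm; this region must be treated with a careful cut-off and by handling the forcing via summation by parts consistently, so as to avoid re-introducing a $\delta$-loss. The key structural observation is that the dominant contribution can be routed through $\hat G_0$ alone, converting what would otherwise be a $\delta$-loss into the advertised logarithmic factor.
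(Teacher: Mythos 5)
Your representation of $D\bar u_0(s)$ through $D_mD_sG(m,s)$ and the identification of the forcing $g_0$ with $|g_0(m)|\lesssim|m|^{-3/2}$ match the paper's starting point, and your treatment of the near-$s$ and far-field regions via $G=\hat G_0+\bar G_0$ is fine. The problem is the inner region near the crack tip, which is precisely where the paper has to work hardest and where your argument has a genuine gap.

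You claim that the $\delta$-loss inherited from \cite{2018-antiplanecrack} is ``absorbed'' by Lemma~\ref{lem:Gbaroldannulus}, described as covering ``the crucial transition annulus $|m|\sim|s|/2$ where the generic bound is not strong enough.'' This mischaracterises the difficulty. Lemma~\ref{lem:Gbaroldannulus} applies only on $|s|/4\le|\ell|\le|s|/2$, and the contribution from that annulus is already harmless. The place where the generic bound for $\bar G_0$ actually fails to close the estimate is the full inner ball $|m|\lesssim|s|/8$. There, $|\omega(m)-\omega(s)|\sim|s|^{1/2}$ and the bound from \cite{2018-antiplanecrack} gives $|D_1D_2\bar G_0(m,s)|\lesssim|m|^{-1/2}|s|^{-3/2+\delta/2}$. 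Summing against the $|m|^{-3/2}$ forcing over $|m|\lesssim|s|$ yields $|s|^{-3/2+\delta/2}\log|s|$, which is strictly worse than the target $|s|^{-3/2}\log|s|$ and in particular not a subdominant remainder. The paper even points this out explicitly in Section~\ref{sec:higher-order-G}: the decay rate \eqref{eqn-d1d2G-old-decay} with $\delta=0$ is \emph{sharp} for $\bar G_0$ when $m\in B_{|s|/8}(0)$, so no estimate routed purely through the $\hat G_0+\bar G_0$ splitting can eliminate the $\delta$ there.

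What is missing from your argument is the discrete geometry predictor. The paper's proof splits $G=\hat G_0+\hat G_{1,\mu}+\bar G_{1,\mu}$ in the region $|m|\le|s|/16$ and invokes Theorem~\ref{thm-Gbar1-decay}, which gives $|D_1D_2\bar G_{1,\mu}(\ell,s)|\lesssim|s|^{-2+\delta}|\ell|^{-1/2}$ for $|\ell|\le|s|/16$ — a gain of $|s|^{-1/2+\delta}$ over the bound you have available. This is exactly what renders the $\bar G_{1,\mu}$ piece subdominant, and the explicit $\hat G_0+\hat G_{1,\mu}$ piece is then Taylor expanded with no $\delta$ at all, producing the $\log|s|$ factor. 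Without $\hat G_{1,\mu}$ your argument cannot remove the $\delta$-loss in the inner region; Lemma~\ref{lem:Gbaroldannulus} is used in the paper only as a stepping stone in the proof of Theorem~\ref{thm-Gbar1-decay}, not as a substitute for it.
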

    \begin{proof}
    We have
    \begin{equation}\label{eqnDubar0-sum}
    D\bar u_0(s) = \sum_{m \in \La} g_0(m) \cdot Dv(m), 
    \end{equation}
    where $|g_0(m)|\lesssim |m|^{-3/2}$ and $Dv(m) = D_1 D_2 G(m,s)$. We first estimate this sum for $|m| \geq \frac{|s|}{16}$ and split $G = \hat G_0 + \bar G_0$. We know that, for all $m,s \in \La$,
    \[
    |D_1 D_2 \hat G_0(m,s)| \lesssim (1+|\omega(m)||\omega(s)||\omega(m)-\omega(s)|^2)^{-1}
    \]
    from which one can infer (c.f. proof of \cite[Theorem~2.8]{2018-antiplanecrack}) that 
    \[
    \sum_{|m| \geq \frac{|s|}{16}} |g_0(m)\cdot D_1D_2 \hat G_0(m,s)| \lesssim |s|^{-3/2}\log|s|.
    \]
    Turning to $\bar G_0$, by adjusting the constant prefactors defining regions of interest in \cite[Lemma~4.9]{2018-antiplanecrack}, that for $\frac{|s|}{16} \leq |m| \leq \frac{17|s|}{16}$ (crucially including when $m$ is close to $s$), 
    \[
    |D_1 D_2 \bar G_0(m,s)| \lesssim (1+|\omega(m)||\omega(s)||\omega(m)-\omega(s)|^2)^{-1}.
    \]
    We can thus conclude that
    \[
    \sum_{\tfrac{17|s|}{16} \geq |m| \geq \frac{|s|}{16}} |g_0(m)\cdot D_1D_2 \bar G_0(m,s)| \lesssim |s|^{-3/2}\log|s|,
    \]
    by the same argument as for $\hat G_0$.
    
    Finally, by adjusting the constant prefactors in \cite[Lemma 4.12]{2018-antiplanecrack}, we can conclude that 
    \[
    \|D_1 D_2 \bar G_0(\cdot, s)\|_{\ell^2(\Omega_2(s))} \lesssim |s|^{-3/2},
    \]
    where $\Omega_2(s) = \{ m \in \La \mid |m| \geq \frac{17|s|}{16}\}$. This implies that 
    \[
    \sum_{|m| \geq \tfrac{17|s|}{16}} g_0(m)\cdot D_1D_2 \bar G_0(m,s) \lesssim \underbrace{\|g_0\|_{\ell^2(\Omega_2(s))}}_{\lesssim |s|^{-1/2}}\|D_1 D_2 \bar G_0(\cdot, s)\|_{\ell^2(\Omega_2(s))} \lesssim |s|^{-2}.
    \]
    It thus remains to show the estimate
    \[
    \sum_{|m| \leq \frac{|s|}{16}} g_0(m) \cdot Dv(m) = \underbrace{\sum_{|m| \leq \frac{|s|}{16}} g_0(m) \cdot D_1D_2(\hat G_0 + \hat G_{1,\mu})(m,s)}_{:=S_1} + \underbrace{\sum_{|m| \leq \frac{|s|}{16}}g_0(m) \cdot D_1 D_2 \bar G_{1,\mu}(m,s)}_{=: S_2}.
    \]
    Using Theorem~\ref{thm-Gbar1-decay}, we can conclude that 
    \[
    S_2 \lesssim |s|^{-2+\delta}\log|s|.
    \]
    
    To estimate $S_1$, we Taylor expand $\hat G_0$ for $\lvert m \rvert \leq \lvert s \rvert/16$ to obtain
    \begin{align*}
        D_1D_2\hat G_0(m,s) &= \underbrace{D_1D_2\left(-(\nabla F(\omega(s)) + \nabla F(\omega^*(s))\omega(m)\right)}_{D_1D_2\hat G_1^s(s)\omega_2(m)}\\ 
        &+D_1D_2\underbrace{\left(\int_0^1 (1-t)(\nabla^2F(\omega(s) - t(\omega(m)) + \nabla^2F(\omega^*(s) - t\omega(m)))[\omega(m)]^2 dt\right)}_{=:\alpha(m,s)}.
    \end{align*}
    where $|D_1D_2\alpha(m,s)| = O(\lvert s \rvert^{-2})$ uniformly in $m$. As a result
    \[
    \sum_{|m| \leq \frac{|s|}{16}} |g_0(m) \cdot D_1D_2 \alpha(m,s)| \lesssim |s|^{-3/2}. 
    \]
    Similarly, 
    \[
    \sum_{|m| \leq \frac{|s|}{16}} |g_0(m) \cdot D_1D_2 \hat G_1^s(s)(\omega_2(m) + \hat G_1^m(m))| \lesssim |s|^{-3/2},
    \]
    thus it remains to show that the introduction of the cut-off $\mu$ does not affect the overall result. We find that 
    
    \begin{align*}
        \lvert D \bar u_0(s) \rvert  &\lesssim \lvert s \rvert^{-3/2}\log|s| + \Big\lvert \sum_{\lvert m \rvert \leq \lvert s \rvert/16} g_0(m) D_1D_2 \big( (1-\mu(m,s))\hat G_1^m(m) \hat G_1^s(s)\big) \Big\rvert.
    \end{align*}
    For the remaining term we expand out the discrete derivatives as in Lemma~\ref{lem:pushing_eta} and observe that
    \begin{align}
    D_{1\tau}D_{2\sigma}((1-\mu)(m,s) + \hat G^m_1(m)\hat G^s_1(s)) &= (D_{1\tau}D_{2\sigma}(1-\mu)(m,s)) \hat G^s_1(s) \hat G^m_1(m+\tau) \nonumber\\
    &+ (D_{2\sigma}(1-\mu)(m,s))\hat G^s_1(s)(D_{\tau}\hat G^m_1(m))\nonumber\\
    &+ (D_{2\sigma}\hat G^s_1(s))(D_{1\tau}(1-\mu)(m,s)) \hat G_1^m(m+\tau)\nonumber\\
    &+ (D_{\sigma}\hat G^s_1(s))(1-\mu)(m,s)(D_{\tau}\hat G_1^m(m)) \nonumber\\
    &=: T_1 + T_2 + T_3 + T_4. \label{eqn-prod-rule-D1D2}
    \end{align}
    We note that terms $T_1,T_2,T_3$ are only non-zero for $\frac12 |s|^{1/2} \leq |m| \leq |s|^{1/2}$ since at least one derivative falls on the cut-off function $\mu$. Using the known decay of all the functions (c.f. \eqref{eqn-needed-decay1},\eqref{eqn-needed-decay2}, \eqref{eqn-needed-decay3}), we can thus estimate
    \[
    \sum_{|m| \leq \frac{|s|}{16}} |g_0(m)||T_1 + T_2 + T_3| \lesssim \sum_{\frac12 |s|^{1/2} \leq |m| \leq |s|^{1/2}} |m|^{-3/2}(|s|^{-2}|m|^{-1/2 + \delta} + |s|^{-3/2}|m|^{-3/2 + \delta}) \lesssim |s|^{-2+\delta/2}. 
    \]
    Finally, since $(1-\mu)$ is only non-zero for $|m| \geq \frac12 |s|^{1/2}$, we also have
    \[
    \sum_{|m| \leq \frac{|s|}{16}} |g_0(m)||T_4| \lesssim |s|^{-3/2} \sum_{\frac12 |s|^{1/2} \leq |m| \leq \frac{|s|}{16}}|m|^{-3+\delta} \lesssim |s|^{-2 + \delta/2}.
    \]
    It thus follows that
    \begin{align*}
        \Big\lvert &\sum_{\lvert m \rvert \leq \lvert s \rvert/16} g_0(m) D_1D_2 \big( (1-\mu(m,s))\hat G_1^m(m) \hat G_1^s(s)\big) \Big\rvert \lesssim |s|^{-2 + \delta/2}.
    \end{align*}
    We have thus estimated all the terms involved in \eqref{eqnDubar0-sum} and we are able to conclude that indeed
    \[
    |D\bar u_0(s)| \lesssim |s|^{-3/2}\log|s|,
    \]
    which is what we set out to prove. 
    \end{proof}
    
    We next describe the next order predictor, $\hat u_1$, which was given in \eqref{eqn-hatu1-formula}. As will soon become clear, we require that $\hat u_1$ satisfies the following PDE.
    \begin{alignat}{2}
    -\Delta \hat u_1 &= \divo H \quad &&\text{in}\quad \R^2\setminus \Gamma_0\nonumber \\
        \nabla \hat u_1 \cdot \nu &= 0,\quad &&\text{ on }\quad \Gamma_0,\label{eq:u1PDE}
    \end{alignat}
    where
    \[
    H = \frac{1}{6} \phi^{(iv)}(0)\begin{pmatrix}
        (\partial_1 \hat u_0)^3 \\ (\partial_2 \hat u_0)^3 \end{pmatrix},\quad  \divo H = \frac{1}{2} \phi^{(iv)}(0) \Big((\partial_1 \hat u_0)^2\partial^2_{11}\hat u_0 + (\partial_2 \hat u_0)^2\partial^2_{22}\hat u_0 \Big).
    \]
    We prove the following. 
    \begin{lemma}
        The equation \ref{eq:u1PDE} admits the explicit solution 
    \begin{equation}\label{eqn: uhat1}
        \hat u_1(x) := -\frac{1}{64} \phi^{(iv)}(0) K^3 r_x^{-1/2}\Big( \log r_x \sin \tfrac{\theta_x}{2} + \frac{1}{6} \sin\tfrac{5\theta_x}{2}\Big).
    \end{equation}
    \end{lemma}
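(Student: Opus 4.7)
The plan is to verify the explicit formula by direct computation in polar coordinates, exploiting the fact that $\hat u_0 = K r^{1/2}\sin(\theta/2)$ is the imaginary part of $Kz^{1/2}$, so all quantities have a clean separated form $r^\alpha f(\theta)$ and the PDE reduces to a trigonometric identity.

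First I would compute the gradient of $\hat u_0$. Using the Cauchy--Riemann equations for $z^{1/2}$, I get
\[
\partial_1 \hat u_0 = -\tfrac{K}{2} r^{-1/2}\sin(\theta/2), \qquad \partial_2 \hat u_0 = \tfrac{K}{2} r^{-1/2}\cos(\theta/2),
\]
so that $H_1 = -\tfrac{K^3\phi^{(iv)}(0)}{48}\,r^{-3/2}\sin^3(\theta/2)$ and $H_2 = \tfrac{K^3\phi^{(iv)}(0)}{48}\,r^{-3/2}\cos^3(\theta/2)$. Using $\partial_1 = \cos\theta\,\partial_r - r^{-1}\sin\theta\,\partial_\theta$ and the analogous formula for $\partial_2$, together with the identity $\cos\theta \sin(\theta/2)+\sin\theta\cos(\theta/2) = \sin(3\theta/2)$, the cross-terms combine to
\[
\divo H = -\tfrac{K^3\phi^{(iv)}(0)}{32}\,r^{-5/2}\sin(3\theta/2)\cos\theta.
\]

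Next I would compute $\Delta \hat u_1$. For a function of the form $r^\alpha g(\theta)$ the Laplacian in polar coordinates is $r^{\alpha-2}(\alpha^2 g + g'')$, and for $r^\alpha \log r\, g(\theta)$ it is $r^{\alpha-2}\bigl(\log r(\alpha^2 g+g'') + 2\alpha g\bigr)$. Applied with $\alpha=-1/2$ to $g_1(\theta)=\sin(\theta/2)$, the bracket $\alpha^2 g_1 + g_1''$ vanishes identically, so the logarithmic term contributes only $2\alpha g_1 = -\sin(\theta/2)$; applied to $g_2(\theta)=\tfrac16\sin(5\theta/2)$, the bracket gives $-\sin(5\theta/2)$. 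Combining with the overall prefactor yields
\[
\Delta \hat u_1 = \tfrac{K^3\phi^{(iv)}(0)}{64}\, r^{-5/2}\bigl(\sin(\theta/2)+\sin(5\theta/2)\bigr),
\]
and the sum-to-product identity $\sin(\theta/2)+\sin(5\theta/2) = 2\sin(3\theta/2)\cos\theta$ matches this against $-\divo H$ exactly.

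Finally the boundary condition follows essentially for free: the crack $\Gamma_0$ corresponds to $\theta=\pm\pi$, the outward normal derivative reduces to $\pm r^{-1}\partial_\theta \hat u_1$, and differentiating the formula for $\hat u_1$ in $\theta$ produces only $\cos(\theta/2)$ and $\cos(5\theta/2)$ factors, both of which vanish at $\theta=\pm\pi$. There is no serious obstacle here; the only mild bookkeeping point is checking the constant $1/64$ and the relative weight $1/6$ between the two angular modes, which is precisely what the two computations above pin down.
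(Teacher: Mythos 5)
Your proposal is correct and follows essentially the same route as the paper: compute $\nabla \hat u_0$ via Cauchy--Riemann, express $\divo H$ in polar coordinates, compute $\Delta \hat u_1$ in polar coordinates, and match term by term. The only cosmetic differences are that the paper first converts $H$ to its polar components $(G_r,G_\theta)$ and uses the polar divergence formula, and it expands $\sin^3$, $\cos^3$ into linear combinations of $\sin\tfrac{\theta}{2}$, $\sin\tfrac{5\theta}{2}$ before taking derivatives, whereas you keep the Cartesian components, apply the polar expressions for $\partial_1,\partial_2$, factor out $\sin\tfrac{3\theta}{2}$, and invoke the sum-to-product identity at the end; and where the paper plugs the ansatz $c_1 r^{-1/2}\log r\sin\tfrac{\theta}{2}+c_2 r^{-1/2}\sin\tfrac{5\theta}{2}$ into \eqref{eq:u1PDEpolar} to solve for $c_1,c_2$, you verify the given coefficients directly with the compact formulas for $\Delta(r^\alpha g)$ and $\Delta(r^\alpha\log r\,g)$. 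Your observation that $\alpha^2 g+g''$ vanishes identically for $\alpha=-\tfrac12$, $g=\sin\tfrac{\theta}{2}$ (so the $\log r$ factor is forced) is a nice, tidy way to motivate the logarithmic term. All the intermediate constants check out, and your boundary-condition check is the same one implicit in \eqref{eq:u1PDEpolar}.
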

    \begin{remark}
        We do not claim uniqueness. In fact, we will look at the homogeneous part below with $\hat u_2$.
    \end{remark}
    \begin{proof}
        Remember that $\hat u_0 = K \omega_2(x)$, where $\omega$ is the complex square-root. Or in polar coordinates, $\hat u_0(r \cos \theta, r \sin \theta) = \hat v_0(r, \theta) := K r^{1/2} \sin \tfrac{\theta}{2}$. For the derivatives we get
        \begin{align*}
            \partial_1 \hat u_0 &= \partial_r \hat v_0 \cos \theta - \frac{1}{r} \partial_\theta \hat v_0 \sin \theta \\
            &= \frac{K}{2} r^{-1/2} \Big( \sin \tfrac{\theta}{2} \cos \theta - \cos \tfrac{\theta}{2} \sin \theta \Big)\\
            &= -\frac{K}{2} r^{-1/2}\sin \tfrac{\theta}{2}
            \end{align*}
            \begin{align*}
            \partial_2 \hat u_0 &= \partial_r \hat v_0 \sin \theta + \frac{1}{r} \partial_\theta \hat v_0 \cos \theta \\
            &= \frac{K}{2} r^{-1/2} \Big( \sin \tfrac{\theta}{2} \sin \theta + \cos \tfrac{\theta}{2} \cos \theta  \Big)\\
            &= \frac{K}{2} r^{-1/2}\cos\tfrac{\theta}{2}.
        \end{align*}
        
        Let us use that to express the entire non-linearity in polar coordinates.
    
        First, let use define
        \begin{align*}
            G_r(r, \theta) &= H(r \cos \theta, r \sin \theta) \cdot \begin{pmatrix}
        \cos \theta \\ \sin \theta \end{pmatrix}\\
        G_\theta(r, \theta) &= H(r \cos \theta, r \sin \theta) \cdot \begin{pmatrix}
        -\sin \theta \\ \cos \theta \end{pmatrix}
        \end{align*}
    
        Calculating the derivatives we find the (standard) formula for the divergence in polar coordinates
        \begin{equation} \label{eq:polardiv}
            \divo H(x) = \partial_r G_r (r_x, \theta_x) + \frac{1}{r} \partial_\theta G_\theta (r_x, \theta_x) + \frac{1}{r} G_r (r_x, \theta_x).
        \end{equation}
        In our case,
        \begin{align*}
            G_r(r, \theta) &= \frac{1}{6} \phi^{(iv)}(0) \Big( (\partial_1 \hat u_0)^3 \cos \theta + (\partial_2 \hat u_0)^3 \sin \theta \Big) \\
            &= \frac{1}{48} \phi^{(iv)}(0) K^3 r^{-3/2}\Big( -\sin^3 \tfrac{\theta}{2} \cos \theta + \cos^3 \tfrac{\theta}{2} \sin \theta \Big) \\
            &= \frac{1}{192} \phi^{(iv)}(0) K^3 r^{-3/2} \Big( 3 \sin \tfrac{\theta}{2} + \sin \tfrac{5\theta}{2}\Big)
            \end{align*}
            and        
        \begin{align*}
        G_\theta(r, \theta) &= \frac{1}{6} \phi^{(iv)}(0) \Big( -(\partial_1 \hat u_0)^3 \sin \theta + (\partial_2 \hat u_0)^3 \cos \theta \Big)\\
        &= \frac{1}{48} \phi^{(iv)}(0) K^3 r^{-3/2} \Big( \sin^3 \tfrac{\theta}{2} \sin \theta + \cos^3 \tfrac{\theta}{2} \cos \theta \Big)\\
        &= \frac{1}{192} \phi^{(iv)}(0) K^3 r^{-3/2} \Big( 3 \cos \tfrac{\theta}{2} + \cos \tfrac{5\theta}{2}\Big).
        \end{align*}
        Now we can insert these expressions into \eqref{eq:polardiv} to obtain
        \begin{align*}
            \divo H(r \cos \theta, r \sin \theta) &= \partial_r G_r (r, \theta) + \frac{1}{r} \partial_\theta G_\theta (r, \theta) + \frac{1}{r} G_r (r, \theta)\\
            &= \frac{1}{192} \phi^{(iv)}(0) K^3 r^{-5/2} \Big( (1-3/2)(3 \sin \tfrac{\theta}{2} + \sin \tfrac{5\theta}{2}) + (-3/2 \sin \tfrac{\theta}{2} - 5/2 \sin \tfrac{5\theta}{2})\Big)\\
            &= -\frac{1}{64} \phi^{(iv)}(0) K^3 r^{-5/2} ( \sin \tfrac{\theta}{2} + \sin \tfrac{5\theta}{2} ).
        \end{align*}
        Let us also remember that the Laplace in polar coordinates is given as
        \[\Delta \hat u_1 = \partial_r^2 \hat v_1 + \frac{1}{r} \partial_r \hat v_1 + \frac{1}{r^2} \partial_\theta^2 \hat v_1, \]
        where we now also write $\hat u_1(r \cos \theta, r \sin \theta) = \hat v_1(r, \theta)$.
        Overall, equation \eqref{eq:u1PDE} becomes
        \begin{alignat}{2}
            -\partial_r^2 \hat v_1 - \frac{1}{r} \partial_r \hat v_1 - \frac{1}{r^2} \partial_\theta^2 \hat v_1 &= -\frac{1}{64} \phi^{(iv)}(0) K^3 r^{-5/2} ( \sin \tfrac{\theta}{2} + \sin \tfrac{5\theta}{2}) \quad &&\text{in}\quad (0,\infty) \times (-\pi, \pi) \nonumber \\
            \partial_\theta \hat v_1 &= 0,\quad &&\text{ for }\quad \theta = \pm \pi,\label{eq:u1PDEpolar}
        \end{alignat}
        in polar coordinates.
        Now we can insert our function
        $\hat v_1 = c_1 r^{-1/2} \log r \sin \tfrac{\theta}{2} + c_2 r^{-1/2} \sin \tfrac{5\theta}{2}$.
        As
        \[(r^{-1/2} \log r)' = r^{-3/2} (1-1/2 \log r)\]
        and 
        \[(r^{-1/2} \log r)'' = r^{-5/2} (-2+3/4 \log r),\]
        for the $\sin \tfrac{\theta}{2}$ coefficient we need to satisfy
        \[c_1 (2-3/4 \log r - 1 + 1/2 \log r + 1/4 \log r)r^{-5/2} = -\frac{1}{64} \phi^{(iv)}(0) K^3 r^{-5/2}, \]
        which in fact just simplifies to
        \[c_1 = -\frac{1}{64} \phi^{(iv)}(0) K^3. \]
        For $\sin \tfrac{5\theta}{2}$ we get
        \[c_2(-3/4 + 1/2 + 25/4 )r^{-5/2} = -\frac{1}{64} \phi^{(iv)}(0) K^3 r^{-5/2}\]
        or
        \[c_2 = -\frac{1}{6} \frac{1}{64} \phi^{(iv)}(0) K^3.\]
    \end{proof}
    
    The next lemma concerns the effect that the introduction of $\hat u_1$ has on $\sum_{m \in \La} D\bar u_1(m)\cdot Dv(m)$ for $v \in \Hc$, as compared to the known equality for $\bar u_0$, given by 
    \[
    \sum_{m \in \La}D\bar u_0(m) \cdot Dv(m) = \sum_{m \in \La} g_0(m) \cdot Dv(m),
    \]
    where $|g_0(m)| \lesssim |m|^{-3/2}$. The following is true instead for $\bar u_1$. 
    \begin{lemma} \label{lem:stressdecay}
        There is a $g_1 \in \ell^2(\La; \R^2)$ such that for all $v \in \Hc$ it holds that
        \[
        \sum_{m \in \La} D\bar u_1(m) \cdot Dv(m)= \sum_{m \in \La}g_1(m)\cdot Dv(m).
        \]
        Furthermore, $|g_1(m)| \lesssim |m|^{-5/2}\log|m|$.
    \end{lemma}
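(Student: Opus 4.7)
The plan is to use the Euler--Lagrange equation
\[
\sum_m \nabla V(D\hat u_0 + D\bar u_0)(m) \cdot Dv(m) = 0
\]
satisfied by $\bar u_0$, together with the identity $\bar u_1 = \bar u_0 - \hat u_1$. The anti-plane mirror symmetry of $\phi$ kills all even-order derivatives of $V$ at the origin, so $\nabla V(y) = y + \tfrac16\nabla^4V(0)[y]^3 + R_5(y)$ with $|R_5(y)|\lesssim|y|^5$ (using the normalisation $\phi''(0)=1$ as throughout the paper). Plugging this into the equilibrium equation and invoking Lemma \ref{lem:symmetric_I_decay} to handle $\sum_m D\hat u_0\cdot Dv$, one rearranges to
\begin{align*}
\sum_m D\bar u_1 \cdot Dv = -\sum_m \hat h_0^{\rm sym}\cdot Dv &- \tfrac16\sum_m \nabla^4 V(0)[D\hat u_0 + D\bar u_0]^3\cdot Dv \\ &- \sum_m R_5(D\hat u_0+D\bar u_0)\cdot Dv - \sum_m D\hat u_1\cdot Dv.
\end{align*}

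Three of the four contributions are easy to absorb into $g_1$. The term $\hat h_0^{\rm sym}$ already has the required $|m|^{-5/2}$ decay by Lemma \ref{lem:symmetric_I_decay}; using $|D\hat u_0(m)|\lesssim|m|^{-1/2}$ together with $|D\bar u_0(m)|\lesssim|m|^{-3/2}\log|m|$ from Lemma \ref{ubar0-decay}, the binomial expansion shows that $[D\hat u_0+D\bar u_0]^3 - [D\hat u_0]^3$ is pointwise $O(|m|^{-5/2}\log|m|)$; and $R_5$ is pointwise $O(|m|^{-5/2})$. The problem therefore reduces to showing
\[
\tfrac16\sum_m \nabla^4V(0)[D\hat u_0(m)]^3 \cdot Dv(m) + \sum_m D\hat u_1(m)\cdot Dv(m) = \sum_m r(m)\cdot Dv(m)
\]
with $|r(m)|\lesssim|m|^{-5/2}\log|m|$, the difficulty being that each of the two sums on the left is only of pointwise size $|m|^{-3/2}$.

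This cancellation is precisely what \eqref{eq:u1PDE} is designed to achieve. Its weak form
\[
\int_{\R^2\setminus\Gamma_0}\nabla\hat u_1\cdot\nabla w\,dx = \int_{\R^2\setminus\Gamma_0} H\cdot\nabla w\,dx, \qquad H=\tfrac16\phi^{(iv)}(0)\bigl((\partial_1\hat u_0)^3,(\partial_2\hat u_0)^3\bigr)^{\rm T},
\]
holds for every compactly supported $w\in H^1(\R^2\setminus\Gamma_0)$. Taking $w = I^{\rm sym} v$, the symmetric interpolant of Section \ref{sec:interpol} (which is compactly supported since $v\in\Hc$), one applies the same region-by-region midpoint/Taylor argument as in Lemma \ref{lem:symmetric_I_decay} to rewrite each of the two discrete sums as the corresponding continuum integral plus pointwise bond-wise quadrature errors. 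These errors are controlled by $|\nabla^3\hat u_1(m)|\lesssim|m|^{-7/2}\log|m|$ off $\Gamma$ and $|\nabla^2\hat u_1(m)|\lesssim|m|^{-5/2}\log|m|$ on $\Gamma$ for the $\hat u_1$ sum (both read off from \eqref{eqn-hatu1-formula}), and by $\bigl|\nabla\bigl((\partial_\rho\hat u_0)^3\bigr)(m)\bigr|\lesssim|\nabla\hat u_0|^2|\nabla^2\hat u_0|\lesssim|m|^{-5/2}$ for the cubic sum. The two continuum integrals coincide by the weak form and cancel; we then collect all leftover pointwise errors into $g_1$.

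The principal obstacle lies at the crack-face bonds $b(m,\rho)\subset\Gamma$, where the default one-sided interpolation would give an error of order $|m|^{-3/2}$ for the cubic term — which is too large. As in the proof of Lemma \ref{lem:symmetric_I_decay}, the remedy is to use the symmetrised interpolant $I^{\rm sym}$ and invoke the Neumann condition $\partial_2\hat u_0 = 0$ on $\Gamma_0$, together with the analogous symmetry of $\hat u_1$ inherited from \eqref{eq:u1PDE}, which reduces the crack-face contribution to the admissible $|m|^{-5/2}\log|m|$. Square-summability of $|m|^{-5/2}\log|m|$ in two dimensions then yields $g_1\in\ell^2$, finishing the proof.
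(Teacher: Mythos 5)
Your proof takes exactly the paper's route: apply the Euler--Lagrange equation, Taylor-expand $\nabla V$ using mirror symmetry, split off the $D\hat u_0$ contribution (handled by Lemma \ref{lem:symmetric_I_decay}), absorb the quintic remainder and the $[D\hat u_0 + D\bar u_0]^3 - [D\hat u_0]^3$ difference via Lemma \ref{ubar0-decay}, and dispose of the surviving $\tfrac16\phi^{(iv)}(0)[D\hat u_0]^3 + D\hat u_1$ pairing by testing the weak form of \eqref{eq:u1PDE} with the interpolant and estimating bond-wise quadrature errors -- this is precisely the paper's decomposition into $T_1$, $T_3$, $T_2$.

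One small inaccuracy in your discussion of the crack-face bonds: the first-order quadrature error for the cubic $H$-term is already $O(|\nabla H|) \sim |m|^{-5/2}$ there, not $|m|^{-3/2}$ as you state, so no symmetrisation is actually needed for that piece. The genuine $|m|^{-3/2}$ obstacle on $\Gamma$ arises for the linear $D\hat u_0$ term, which you have already handled separately through $\hat h_0^{\rm sym}$; in the remaining $T_2$ sum the symmetrised interpolant is only used to push the $\hat u_1$ crack-face error from $|\nabla^2 \hat u_1| \sim |m|^{-5/2}\log|m|$ down to $|\nabla^3 \hat u_1| \sim |m|^{-7/2}\log|m|$ (and even that refinement is cosmetic, since the stated bound for $g_1$ permits the extra $\log$). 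This is a misattribution rather than a gap: the argument goes through as written.
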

    \begin{proof}
    Recall that $u = \hat u_0 + \hat u_1 + \bar u_1$. Consider
    \begin{align*}
        \sum_{\Lambda} D\bar u_1 \cdot Dv &= -\sum_{m \in \Lambda} D\hat u_0(m) \cdot Dv(m) \\
        &\quad -\sum_{m \in \Lambda} \sum_{\rho \in \Rc(m)} \big( D_{\rho}\hat u_1(m) + \frac{1}{6}\phi^{(iv)}(0)(D_{\rho} \hat u_0(m))^3 \big) D_{\rho}v(m) \\
        &\quad -\sum_{m \in \Lambda} \sum_{\rho \in \Rc(m)} \Big(\phi'(D_{\rho}u(m)) - D_{\rho}u(m) - \frac{1}{6}\phi^{(iv)}(0)(D_{\rho}\hat u_0(m))^3  \Big) D_{\rho}v(m)\\
        &\quad =: T_1 + T_2 + T_3
    \end{align*}
    for any $v \in \Hc$. In the following we will show that, for any $i \in \{1,2,3\}$,
    \[
    T_i = \sum_{m \in \La}f_i(m)\cdot Dv(m),
    \]
    for some $f_i$, as well as the estimates $\lvert f_i(m) \rvert \lesssim |m|^{-5/2}$, for $i=1,2$,  and $\lvert f_3(m) \rvert \lesssim |m|^{-5/2}\log|m|$.
    
    The assertion holds for $T_3$ thanks to Lemma~\ref{ubar0-decay} and a direct Taylor expansion of terms in the bracket. Similarly, the assertion for $T_1$ holds due to Lemma~\ref{lem:symmetric_I_decay}. We now apply a similar argument to estimate $T_2$.
    
    Let us start by rewriting the PDE that $\hat u_1$ satisfies, given by \eqref{eq:u1PDE}, in weak form by testing with $Iv$, where $I$ is the standard interpolation of lattice functions,  as discussed in Section~\ref{sec:interpol}. For compactly supported $v$ we obtain
    \[
    J := 2 \int_{\R^2\setminus \Gamma_0} \nabla \hat u_1 \cdot \nabla Iv + H \cdot \nabla Iv\, dx = 0,
    \]
    without any boundary term due to the boundary condition for $\hat u_1$ and the fact that $H \big|_{\theta = \pm \pi} = 0$, which follows from the boundary condition that $\hat u_0$ satisfies. We multiplied by $C_{\Lambda}=2$ for direct comparison with the lattice sums where each part of the domain will be used once for each direction of the bond.
    
    As discussed in Section~\ref{sec:interpol}, we can rewrite $J$ as 
    \[
    J = \frac12 \sum_{m \in \La}\sum_{\rho \in \Rc(m)}\left(\int_{R_{m,\rho}}( \nabla_{\rho}\hat u_1 + H \cdot \rho)dx\right)D_{\rho}v(m).
    \]
    Subtracting $J=0$ from $T_2$, we get
    \begin{align*}
    T_2 = &-\sum_{m \in \La}\sum_{\rho \in \Rc(m)}\left( D_{\rho}\hat u_1(m) - \int_{R_{m,\rho}}\nabla_\rho \hat u_1(x)dx \right)D_{\rho}v(m)\\
    &-\sum_{m \in \La}\sum_{\rho \in \Rc(m)}\left(\frac16\phi^{(iv)}(0)(D_{\rho}\hat u_0(m))^3 - \int_{R_{m,\rho}}\frac{1}{6} \phi^{(iv)}(0) (\nabla_\rho \hat u_0(x))^3 dx \right)D_{\rho}v(m)\\
    =:& T_{2,1} + T_{2,2},
    \end{align*}
    where we have used the fact that we can write $H(x) \cdot \rho = \frac{1}{6} \phi^{(iv)}(0) (\partial_\rho \hat u_0(x))^3$. By retracing the argument in Section~\ref{sec:interpol}, we thus arrive at
    \[
    T_{2,1} = \sum_{m\in \La} \hat h_1(m) \cdot Dv(m),
    \]
    where $\hat h_1 \in \ell^2(\La; \R^4)$ is given by
    \begin{align*}
        (\hat h_1)_{\rho}(m) = \begin{cases}
            D_{\rho}\hat u_1(m) - \int_{R_{m, \rho}} \nabla_{\rho}\hat u_1(x)dx, \quad &b(m,\rho) \not\subset Q_0,\\
            D_{\rho}\hat u_1(m) - C_{m+\rho,m}, \quad &b(m,\rho) \subset Q_0.
        \end{cases}
    \end{align*}
    where the formulae for the finite $C_{m+\rho,m}$ can be readily obtained, similarly to the argument in \cite[Section~4.2.1.]{2018-antiplanecrack}. By construction, we also have that
    \begin{subequations}
        \begin{align*}
        b(m,\rho) \not\subset \Gamma &\implies |(\hat h_1(m))_{\rho}| \lesssim |\nabla^3 \hat u_1(m)| \lesssim |m|^{-7/2}\log|m|\\
        b(m,\rho) \subset \Gamma\setminus Q_0 &\implies |(\hat h_1(m))_{\rho}| \lesssim |\nabla^2 \hat u_1(m)| \lesssim |m|^{-5/2}\log|m|.
    \end{align*}
    \end{subequations}
    Likewise, a Taylor expansion of $D_{\rho}\hat u_0(m)$ and a first order quadrature estimate we get
    \[
    T_{2,2} = \sum_{m \in \La} k(m) \cdot Dv(m),
    \]
    where $|k(m)| \lesssim |m|^{-5/2}$. This allows us to establish that
    \[
    T_2 = \sum_{m \in \La}\hat f_2(m) \cdot Dv(m),\quad |\hat f_2(m)| \lesssim |m|^{-5/2}\log|m|.
    \]
    It remains to invoke the symmetrisation trick from Lemma~\ref{lem:symmetric_I_decay}, which concerns replacing $Iv$ with $I^{\rm sym}v$, to conclude that
    \[
    T_2 = \sum_{m \in \La} f_2(m) \cdot Dv(m), \quad |f_2(m)| \lesssim |m|^{-5/2}.
    \]
    The result of the lemma has thus been established.
    
    \end{proof}
    \subsection{Proof of Theorem~\ref{thm:ubar2_decay}}\label{sec:proof-thm:ubar2_decay}
    We are now in a position to prove the main decay result for $\bar u_2$. We note that the steps of the argument are similar to the proof of Lemma~\ref{ubar0-decay}, but with two big differences. First is the improved decay of $g_1$ from Lemma~\ref{lem:stressdecay} and the second is that we first focus on $\bar u_1$, which lead us to defining $\hat u_2$ as needed to obtain the ultimate result. 
    \begin{proof}[Proof of Theorem~\ref{thm:ubar2_decay}]
    First consider $\bar u_1$. From Lemma \ref{lem:stressdecay} and the solution property of the Green's function we deduce that
    \begin{align*}
        D \bar u_1(s) = \sum_{m \in \La} g_1(m) D_1D_2 G(m,s).
    \end{align*}
    Lemma \ref{lem:stressdecay} also states that
    \[\lvert g_1 \rvert \lesssim \lvert m \rvert^{-5/2} \log \lvert m \rvert.\]
    
    As in Lemma~\ref{ubar0-decay}, we will first estimate this sum for $|m| \geq \frac{|s|}{16}$ and again split $G = \hat G_0 + \bar G_0$. In particular, retracing the steps in Lemma~\ref{ubar0-decay}, but using the improved decay of $g_1$ when compared to $g_0$, we immediately can conclude that
    \[
    \sum_{|m| \geq \frac{|s|}{16}} |g_1(m)\cdot D_1D_2 \hat G_0(m,s)| \lesssim |s|^{-5/2}\log|s|.
    \]
    \[
    \sum_{\tfrac{17|s|}{16} \geq |m| \geq \frac{|s|}{16}} |g_1(m)\cdot D_1D_2 \bar G_0(m,s)| \lesssim |s|^{-5/2}\log|s|,
    \]
    and 
    \[
    \sum_{|m| \geq \tfrac{17|s|}{16}} g_1(m)\cdot D_1D_2 \bar G_0(m,s) \lesssim \underbrace{\|g_1\|_{\ell^2(\Omega_2(s))}}_{\lesssim |s|^{-3/2}}\|D_1 D_2 \bar G_0(\cdot, s)\|_{\ell^2(\Omega_2(s))} \lesssim |s|^{-3}.
    \]
    Therefore,
    \begin{align*}
        D \bar u_1(s)  &=\sum_{\lvert m \rvert \leq \lvert s \rvert/16} g_1(m)\cdot D_1D_2 G(m,s) +  O(\lvert s \rvert^{-5/2} \log \lvert s \rvert)\\
        &= \sum_{\lvert m \rvert \leq \lvert s \rvert/16} g_1(m)\cdot D_1D_2 (\hat G_0 + \hat G_{1,\mu} )(m,s)  +  O(\lvert s \rvert^{-2+\delta})
    \end{align*}
    where we used Theorem~\ref{thm-Gbar1-decay} for the second line, since
    \[
    \sum_{|m| \leq |s|/16} |g_1(m)||D_1D_2(\bar G_{1,\mu})(m,s)| \lesssim |s|^{-2+\delta} \sum_{|m| \leq |s|/16} |m|^{-3}\log|m| \lesssim |s|^{-2+\delta}.
    \]
    
    A Taylor expansion of $\hat G_0$ for $\lvert m \rvert \leq \lvert s \rvert/16$ reveals that
    \begin{align*}
        D_1D_2\hat G_0(m,s) &= \underbrace{D_1D_2\left(-(\nabla F(\omega(s)) + \nabla F(\omega^*(s))\omega(m)\right)}_{D_1D_2\hat G_1^s(s)\omega_2(m)}\\ 
        &+\underbrace{D_1D_2\left(\int_0^1 (1-t)(\nabla^2F(\omega(s) - t(\omega(m)) + \nabla^2F(\omega^*(s) - t\omega(m)))[\omega(m)]^2 dt\right)}_{\mathcal{O}(|s|^{-2})}.
    \end{align*}
    where the $O(\lvert s \rvert^{-2})$ is uniformly in $m$. 
    
    This implies that, up to the cut-off function $\mu$, the only terms not of order $\mathcal{O}(|s|^{-2+\delta})$ are given by 
    \[
    \sum_{|m| \leq \frac{|s|}{16}} g_1(m) \cdot D_1D_2 \hat G_1^s(s)(\omega_2(m) + \hat G_1^m(m)),
    \]
    which motivates setting 
    \begin{equation}\label{def-hatu_2_proofs}
    \hat u_2(s):= -\hat G_1^s(s) \sum_m g_1(m) (D\hat G_1^m(m) + D \omega_2(m)).
    \end{equation}
    Then we can combine the arguments and see what remains. We find
    \begin{align}\label{eqn-dbaru2-est}
        \lvert D \bar u_2(s) \rvert  &\lesssim \lvert s \rvert^{-2+\delta} + \Big\lvert \sum_{\lvert m \rvert \leq \lvert s \rvert/16} g_1(m) D_1D_2 \big( (1-\mu(m,s))\hat G_1^m(m) \hat G_1^s(s)\big) \Big\rvert.
    \end{align}
    Using \eqref{eqn-prod-rule-D1D2} and the known decay of all the functions (c.f.~\eqref{eqn-needed-decay1}, \eqref{eqn-needed-decay2}, \eqref{eqn-needed-decay3}), we can estimate the remaining term as
    \begin{align*}
    \sum_{|m| \leq \frac{|s|}{16}} |g_1(m)||T_1 + T_2 + T_3| &\lesssim \sum_{\frac12 |s|^{1/2} \leq |m| \leq |s|^{1/2}} |m|^{-5/2}\log|m|(|s|^{-2}|m|^{-1/2 + \delta} + |s|^{-3/2}|m|^{-3/2 + \delta})\\ 
    &\lesssim |s|^{-5/2+\delta/2}\log|s|. 
    \end{align*}
    Finally, since $(1-\mu)$ is only non-zero for $|m| \geq \frac12 |s|^{1/2}$, we also have
    \[
    \sum_{|m| \leq \frac{|s|}{16}} |g_1(m)||T_4| \lesssim |s|^{-3/2} \sum_{\frac12 |s|^{1/2} \leq |m| \leq \frac{|s|}{16}}|m|^{-4+\delta}\log|m| \lesssim |s|^{-5/2 + \delta/2}\log|s|.
    \]
    It thus follows that
    \begin{align*}
        \Big\lvert &\sum_{\lvert m \rvert \leq \lvert s \rvert/16} g_1(m) D_1D_2 \big( (1-\mu(m,s))\hat G_1^m(m) \hat G_1^s(s)\big) \Big\rvert \lesssim |s|^{-5/2 + \delta/2}\log|s|.
    \end{align*}
    Comparing with \eqref{eqn-dbaru2-est}, we have thus established that 
    \[
    |D\bar u_2(s)|\lesssim |s|^{-2+\delta},
    \]
    which concludes the proof. 

    \end{proof}
    
    \bibliographystyle{siam_no_dash}
    \bibliography{papers}

\newcommand{\noop}[1]{}
\begin{thebibliography}{10}

\bibitem{AF77}
{\sc R.~Adams and J.~Fournier}, {\em Cone conditions and properties of sobolev
  spaces}, Journal of Mathematical Analysis and Applications, 61 (1977),
  pp.~713--734.

\bibitem{andric2018atomistic}
{\sc P.~Andric and W.~A. Curtin}, {\em Atomistic modeling of fracture},
  Modelling and Simulation in Materials Science and Engineering, 27 (2018),
  p.~013001.

\bibitem{beck2016elliptic}
{\sc L.~Beck}, {\em Elliptic regularity theory}, Lecture Notes of the Unione
  Matematica Italiana, 19 (2016).

\bibitem{Bitzek2015}
{\sc E.~Bitzek, J.~R. Kermode, and P.~Gumbsch}, {\em Atomistic aspects of
  fracture}, International Journal of Fracture, 191 (2015), pp.~13--30.

\bibitem{2017-bcscrew}
{\sc J.~Braun, M.~Buze, and C.~Ortner}, {\em The effect of crystal symmetries
  on the locality of screw dislocation cores}, SIAM J. Math. Anal., 51 (2019),
  pp.~1108--1136.

\bibitem{BHO22}
{\sc J.~Braun, T.~Hudson, and C.~Ortner}, {\em Asymptotic expansion of the
  elastic far-field of a crystalline defect}, Archive for Rational Mechanics
  and Analysis, 245 (2022), pp.~1437--1490.

\bibitem{braun2022higher}
{\sc J.~Braun, C.~Ortner, Y.~Wang, and L.~Zhang}, {\em Higher order far-field
  boundary conditions for crystalline defects}, arXiv preprint
  arXiv:2210.05573,  (2022).

\bibitem{2020-near-crack-tip-plas}
{\sc M.~Buze}, {\em Atomistic modelling of near-crack-tip plasticity},
  Nonlinearity, 34 (2021), pp.~4503--4542.

\bibitem{2018-antiplanecrack}
{\sc M.~Buze, T.~Hudson, and C.~Ortner}, {\em Analysis of an atomistic model
  for anti-plane fracture}, Mathematical Models and Methods in Applied
  Sciences, 29 (2019), pp.~2469--2521.

\bibitem{2019-antiplanecrack}
{\sc M.~Buze, T.~Hudson, and C.~Ortner}, {\em Analysis of cell size effects in
  atomistic crack propagation}, {ESAIM}: Mathematical Modelling and Numerical
  Analysis, 54 (2020), pp.~1821--1847.

\bibitem{BK2021}
{\sc M.~Buze and J.~Kermode}, {\em Numerical-continuation-enhanced flexible
  boundary condition scheme applied to mode-i and mode-iii fracture}, Phys.
  Rev. E, 103 (2021), p.~033002.

\bibitem{EOS2016pp}
{\sc V.~Ehrlacher, C.~Ortner, and A.~V. Shapeev}, {\em Analysis of boundary
  conditions for crystal defect atomistic simulations (preprint)}.

\bibitem{EOS2016}
{\sc V.~Ehrlacher, C.~Ortner, and A.~V. Shapeev}, {\em Analysis of boundary
  conditions for crystal defect atomistic simulations}, Arch. Ration. Mech.
  Anal., 222 (2016), pp.~1217--1268.

\bibitem{frenkel2002understanding}
{\sc D.~Frenkel and B.~Smit}, {\em Understanding molecular simulation: from
  algorithms to applications}, Academic Press San Diego, 2002.

\bibitem{hiremath2022effects}
{\sc P.~Hiremath, S.~Melin, E.~Bitzek, and P.~A. Olsson}, {\em Effects of
  interatomic potential on fracture behaviour in single-and bicrystalline
  tungsten}, Computational Materials Science, 207 (2022), p.~111283.

\bibitem{HO2014}
{\sc T.~Hudson and C.~Ortner}, {\em Existence and stability of a screw
  dislocation under anti-plane deformation}, Arch. Ration. Mech. Anal., 213
  (2014), pp.~887--929.

\bibitem{lakshmipathy2022lefm}
{\sc T.~Lakshmipathy, P.~Steinmann, and E.~Bitzek}, {\em Lefm is agnostic to
  geometrical nonlinearities arising at atomistic crack tips}, Forces in
  Mechanics, 9 (2022), p.~100127.

\bibitem{lawn_1993}
{\sc B.~Lawn}, {\em Fracture of Brittle Solids}, Cambridge Solid State Science
  Series, Cambridge University Press, 2~ed., 1993.

\bibitem{mogensen2018optim}
{\sc P.~K. Mogensen and A.~N. Riseth}, {\em Optim: A mathematical optimization
  package for {Julia}}, Journal of Open Source Software, 3 (2018), p.~615.

\bibitem{moller2014comparative}
{\sc J.~J. M{\"o}ller and E.~Bitzek}, {\em Comparative study of embedded atom
  potentials for atomistic simulations of fracture in $\alpha$-iron}, Modelling
  and Simulation in Materials Science and Engineering, 22 (2014), p.~045002.

\bibitem{olson2023elastic}
{\sc D.~Olson, C.~Ortner, Y.~Wang, and L.~Zhang}, {\em Elastic far-field decay
  from dislocations in multilattices}, Multiscale Modeling \& Simulation, 21
  (2023), pp.~1379--1409.

\bibitem{OrtnerShapeev12}
{\sc C.~Ortner and A.~V. Shapeev}, {\em Interpolants of lattice functions for
  the analysis of atomistic/continuum multiscale methods},  (2012).
\newblock arXiv:1204.37052.

\bibitem{sinclair1975influence}
{\sc J.~Sinclair}, {\em The influence of the interatomic force law and of kinks
  on the propagation of brittle cracks}, The Philosophical Magazine: A Journal
  of Theoretical Experimental and Applied Physics, 31 (1975), pp.~647--671.

\bibitem{sinclair1978flexible}
{\sc J.~Sinclair, P.~Gehlen, R.~Hoagland, and J.~Hirth}, {\em Flexible boundary
  conditions and nonlinear geometric effects in atomic dislocation modeling},
  Journal of Applied Physics, 49 (1978), pp.~3890--3897.

\bibitem{sinclair1972atomistic}
{\sc J.~Sinclair and B.~Lawn}, {\em An atomistic study of cracks in
  diamond-structure crystals}, Proceedings of the Royal Society of London. A.
  Mathematical and Physical Sciences, 329 (1972), pp.~83--103.

\bibitem{stroh1958dislocations}
{\sc A.~Stroh}, {\em Dislocations and cracks in anisotropic elasticity},
  Philosophical magazine, 3 (1958), pp.~625--646.

\bibitem{SJ12}
{\sc C.~Sun and Z.-H. Jin}, {\em Fracture Mechanics}, Academic Press, 2012.

\bibitem{zhang2023atomistic}
{\sc L.~Zhang, G.~Cs{\'a}nyi, E.~Van Der~Giessen, and F.~Maresca}, {\em
  Atomistic fracture in bcc iron revealed by active learning of gaussian
  approximation potential}, npj Computational Materials, 9 (2023), p.~217.

\bibitem{zhang2024efficiency}
{\sc L.~Zhang, G.~Cs{\'a}nyi, E.~van~der Giessen, and F.~Maresca}, {\em
  Efficiency, accuracy, and transferability of machine learning potentials:
  Application to dislocations and cracks in iron}, Acta Materialia,  (2024),
  p.~119788.

\end{thebibliography}

    \end{document}